\newtheorem{thm}{Theorem}[section]
\newtheorem{lem}[thm]{Lemma}
\newtheorem{coro}[thm]{Corollary}
\newtheorem{rem}[thm]{Remark}
\newtheorem{dfn}[thm]{Definition}
\newtheorem{thmalpha}{Theorem}
\newcommand{\Z}{\mathbb{Z}}
\newcommand{\N}{\mathbb{N}}
\newcommand{\Nstar}{\mathbb{N}^{\star}}
\newcommand{\R}{\mathbb{R}}
\newcommand{\C}{\mathbb{C}}
\newcommand{\CC}{\widehat{\mathbb{C}}}
\newcommand{\Hr}{\mathbb{H}_{\mathrm{r}}}
\newcommand{\D}{\mathbb{D}}
\DeclareMathOperator{\diam}{diam}
\DeclareMathOperator{\dist}{dist}
\DeclareMathOperator{\area}{area}
\newcommand{\Julia}{\mathcal{J}}
\newcommand{\Fatou}{\mathcal{F}}
\newcommand{\classB}{\mathcal{B}}
\newcommand{\classS}{\mathcal{S}}
\newcommand{\neighbor}{\mathcal{N}}
\newcommand{\scs}{\scriptsize}
\title{Wandering domains for composition of entire functions}
\date{\today}
\author{N{\'u}ria Fagella, S\'ebastien Godillon, and Xavier Jarque}
\begin{document}


\maketitle

\begin{abstract}
	 C.~Bishop in \cite[Theorem 17.1]{QuasiconformalFoldings} constructs an example of an entire function $f$ in class $\classB$ with {\em at least} two grand orbits of oscillating wandering domains. In this paper we show that his example has {\em exactly} two such orbits, that is, $f$ has no {\em unexpected} wandering domains. We apply this result to the classical problem of relating the Julia sets of composite functions with the Julia set of its members. More precisely, we show the existence of two entire maps $f$ and $g$ in class $\classB$  such that the Fatou set of $f\circ g$ has a wandering domain, while all Fatou components of $f$ or $g$ are preperiodic. This complements a result of A.~Singh in \cite[Theorem 4] {CompositionEntireFunctions} and results of W.~Bergweiler and A.Hinkkanen in \cite{SemiconjugationEntireFunctions} related to this problem.
\end{abstract}

\tableofcontents

\newpage

\section{Introduction}

The systematic global study of the phase portrait of dynamical systems given by the iterates of holomorphic maps of the complex plane goes back to the work of Pierre Fatou and Gaston Julia, at the beginning of the twentieth century. They developed a theory based on the concept of normal families and described precisely what nowadays is known as the Fatou and Julia sets. 

If $f:\C\to\C$ is a transcendental entire function (similar definitions apply if $f$ is a rational function on the Riemann sphere, or if $f$ is a transcendental meromorphic function on the complex plane), the \textit{Julia set}, $\Julia(f)$, is defined as the set of points $z\in\C$ for which the family of iterates $\{f^n\}_{n\geqslant 0}$ fails to be normal in the sense of Montel in every neighborhood  of $z$; that is, if $V$ is a neighborhood of a point in the Julia set, the infinite union of iterates of $V$ must cover the whole plane with the exception of, at most, two points.  By definition $\Julia(f)$ is closed and it can be proven that it is infinite \cite{IterationEntireFunctions}. Its complementary domains in $\C$, if any, are called \textit{Fatou domains} or \textit{Fatou components} and the union of all those components is called the {\it Fatou set}, denoted by $\Fatou(f)$. Both, the Julia and Fatou sets, are  invariant under $f$ and under all branches of $f^{-1}$, hence they form a natural dynamical partition of the complex plane. For a general discussion about their properties for both rational and transcendental dynamics see for example \cite{DynamicalPropertiesEntireFunctions,IterationMeromorphicFunctions,MilnorBook,QuasiconformalSurgeryBook}.

It follows that Fatou components correspond to maximal domains on which the dynamics of 
$f$ are in some sense tame or non chaotic.  In the language of sequences, if $U$ is a Fatou component then the family of iterates $\{f^{n}|_U\}_{n\geqslant 0}$ has a  subsequence  $\{f^{n_{k}}|_U\}_{k\geqslant 0}$ which, in compact subsets of $U$, converges uniformly to either a holomorphic map $g$ or to infinity. Moreover, due to the rigidity of holomorphic maps, the possible \textit{limit functions} $g$ reduce to either constants or irrational rotations in $U$. More precisely, one can show that  there are potentially few possibilities. If $U$ is a  Fatou component then $U$ is periodic if $f^n(U)\subset U$ for some $n\in\N$, but otherwise $U$ is
\begin{itemize}
\item \textit{wandering} if $f^{n}(U)\cap f^{m}(U)=\emptyset$  for all integers $n\neq m$, or \item  \textit{preperiodic} if it is eventually mapped into a periodic component.
\end{itemize}

For a general entire transcendental map,  periodic components can only be basins of attraction of attracting or parabolic cycles, Siegel disks or  Baker domains (also known as parabolic domains at infinity), depending on the limit behavior of the convergent subsequences. We refer to any of the references above for precise definitions. In this paper we shall concentrate mainly on wandering domains, rather than (pre)periodic components. 

Most types of periodic Fatou components are somehow associated to the orbit of a singular value of $f$. We say that $v$ is a {\em singular value} of $f$ if not all branches of $f^{-1}$ are well defined in every sufficiently small neighborhood of $f$. Let $S(f)$ denote the set of singular values of $f$, which is closed. For rational maps the set $S(f)$ is finite and formed exclusively by {\em critical values}, that is by images of {\em critical points} which are zeros of $f'$. Transcendental maps may additionally have {\em asymptotic values} i.e. points which morally have some of their preimages at infinity (like $v=0$ for the exponential map), or limit points of critical and asymptotic values. Some special classes of transcendental maps are more likely to dynamically resemble polynomial or rational dynamics, namely the  \textit{Speiser class}, $\classS$, of maps for which $S(f)$ is finite (also known as critically finite maps); or the  \textit{Eremenko-Lyubich's class},  $\classB$, of maps for which $S(f)$ is bounded. 

It was shown by Sullivan \cite{NoWanderingTheorem} in 1985 that rational maps have no wandering domains. He proved this major result using quasiconformal analysis, showing  that if a rational map  had a wandering domain, the space of quasiconformal deformations would have infinite dimension, contradicting the fact that the space of rational maps of degree $d$ is finite dimensional. Right after Sullivan's non wandering Theorem, Golberg and Keen \cite{FinitenessTheoremEntireFunctions}, and Eremenko and Lyubich \cite{DynamicalPropertiesEntireFunctions}, independently, generalized Sullivan's argument to show that wandering domains are not allowed either for transcendental entire maps in class $\classS$. See also \cite{WanderingDomains}.

Despite these negative results, it is well known that wandering domains are possible for  transcendental entire functions in general. The first example of a wandering domain was given by Baker \cite{EntireFunctionWanderingDomain} and later Herman \cite{ExemplesRationnellesOrbiteDense} proposed a systematic way of constructing wandering domains by means of lifting transcendental maps in $\C\setminus\{0\}$. On the other hand, if $U$ is a multiply connected Fatou domain, then it should be a wandering domain \cite{WanderingDomains}. The wandering domains constructed in  \cite{MultiplyConnectedDomains,EntireFunctionWanderingDomain} happen to be multiply connected (nowadays known as \textit{Baker-wandering domain}). His examples have been the seed of a large series of papers on multiply connected wandering domains. See \cite{OnMultiplyConnectedWanderingDomains,MultiplyConnectedWanderingDomains} and references therein.

It was proven by Fatou \cite[Section 28]{Fatou1}, that if $U$ is a wandering domain of $f$, all limit functions of any convergent subsequence $\{f^{n_{k}}|_U\}_{k\geqslant 0}$ are constant, say $a\in\CC$. Later on, Baker \cite{LimitFunctions} proved that if $a\in\CC$ is a limit function then either $a=\infty$ or $a\in\Julia(f)\cap\overline{E}$ where
$$
E:=\bigcup_{s\in S(f)}\bigcup_{n\geqslant 0} f^{n}(s)
$$
is the \textit{post-singular set}. This result was improved later in \cite{LimitFunctionsIteratesWanderingDomains} by showing that either $a=\infty$ or $a\in\Julia(f)\cap E'$ where $E'$ denotes the \textit{derived set} of $E$ (i.e., finite limit points of $E$). As an application of this result, it is easy to deduce that $f(z)=\exp(z)$ has no wandering domains (and hence $\Julia(f)=\C$) since, on the one hand, $E'=\emptyset$ and, on the other hand, the expansivity of the map to the right implies that $a$ cannot be $\infty$. Consequently, wandering domains can be classified in terms of their constant limit functions as \textit{escaping} if $f^{n}|_U\to\infty$  as $n\to+\infty$ (locally uniformly in $U$), \textit{oscillating} if there exist strictly increasing sequences $(n_{k})_{k\geqslant 0}$ and $(m_{k})_{k\geqslant 0}$ such that $f^{n_{k}}|_U\to\infty$ and $f^{m_{k}}|_U\to a\in\C$ as $k\to+\infty$, and \textit{(orbitally) bounded} if $\infty$ is not the limit function of any convergent subsequence $\{f^{n_{k}}|_U\}_{k\geqslant 0}$ (i.e, all limit functions are finite). All previously mentioned examples where of the first kind and it was not until 1987 that Eremenko and Lyubich \cite{EntireFunctionsPathologicalDynamics} gave an example of a transcendental entire function with a wandering domain of the second kind with infinitely many different finite constant limit functions. This example was constructed using approximation theory and hence it is not explicit. As of today it is still an open question whether orbitally bounded wandering domains exist. 

Fatou components of functions in class $\classB$ are always simply connected, and therefore such maps have no Baker-wandering domain. Even more,  it was proven in \cite{DynamicalPropertiesEntireFunctions} that $\Julia(f)=\overline{\mathcal{I}(f)}$, where $\mathcal{I}(f)$ is the {\em escaping set}, i.e. the set of points whose orbit escapes to $\infty$. Hence in class $\classB$ there are no escaping wandering domains. Recently, however, two new results have opened new lights about the existence of wandering domains in class $\classB$. These results are relevant in the paper at hand.

In \cite{AbsenceWanderingDomains} Rempe-Gillen and Mihaljevi\'c-Brandt give a set of conditions for a function, under which wandering domains can be discarded.   Since in class $\classB$ any wandering domain should have some finite limit functions, they conjecture that if the orbit of all singular values converge uniformly to infinity, i.e. if
$$
\lim_{n\to+\infty}\ \inf_{s\in S(f)}\ |f^{n}(s)|=+\infty
$$
then $f$ has no wandering domains. They prove that this is indeed the case if $f$ satisfies a certain somewhat technical condition. One of the key tools they use to prove their remarkable result is  
\cite[Theorem 4.1]{AbsenceWanderingDomains} (see also Theorem \ref{thm:Rempe}) which we shall use in our proofs.

The second breakthrough appears in \cite{QuasiconformalFoldings}
where Bishop gives a constructive method to build transcendental entire functions in class $\classB$, with  certain control on their singular set and, potentially, on their dynamics. Roughly speaking, Bishop's main theorem is as follows (see  Section \ref{sec:preliminaries} for details). Let $T$ be an unbounded connected bipartite graph embedded in $\C$ and let $\{\tau_{j}\}_{j}$ be a collection of conformal maps from every connected component $\Omega_{j}$ of $\C\setminus T$ onto either the right half plane $\Hr$ or the unit disk $\D$. Let $\{\sigma_{j}\}_{j}$ be a collection of certain holomorphic maps (chosen conveniently) from $\tau_{j}(\Omega_{j})\in\{\Hr,\D\}$ into $\C$. Then there exists a transcendental entire function $f$ which coincides with $\sigma\circ\tau$ outside a certain neighborhood of $T$. Moreover: \textbf{(i)} $f$ is in class $\classB$ and all its singular set belongs to  $\D\cup\{-1,+1\}$; \textbf{(ii)} $f$ sends (a slight enlargement of) $T$ to $[-1,1]\cup\mathbb{S}^{1}$; and \textbf{(iii)} the vertices of  $T$ are the critical points of $f$ with critical values in $\pm 1$. See \cite{QuasiconformalFoldings} and Sections \ref{sec:preliminaries} and \ref{sec:Bishop_example} of this paper for details.

Bishop's construction method can be used to provide examples of many different phenomena. In particular, he adapts it to prove the following. 

\begin{thm}[{\cite[Theorem 17.1]{QuasiconformalFoldings}}]\label{thm:Bishop_example}
There exists a transcendental entire function $f$ in class $\classB$ such that $f$ has two grand orbits of oscillating wandering domains.
\end{thm}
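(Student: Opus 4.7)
The plan is to invoke Bishop's quasiconformal folding theorem, which is already available to us from the discussion in Section \ref{sec:preliminaries}, and design the data $(T,\{\tau_j\},\{\sigma_j\})$ so that the resulting entire function $f$ has a prescribed wandering orbit of disks along which the iterates alternate between escaping to infinity and returning to a bounded region. Concretely, I would place along (or near) the positive real axis a sequence of \emph{bounded} faces $\Omega_n$ of the graph $T$, each of which is mapped by the normalized conformal $\tau_n$ onto $\D$, together with a single unbounded face on which $\tau$ maps onto $\Hr$ and $\sigma$ is of exponential type. On each bounded $\Omega_n$, $\sigma_n$ is chosen to be a finite Blaschke-like product whose image lies in $\D$, so that $f(\Omega_n)\subseteq \D\cup\{\pm 1\}$ after folding; on the unbounded face, $\sigma$ is large enough to push points far out.

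The next step is to choose the geometry of the $\Omega_n$ and the specific Blaschke data so as to manufacture an explicit wandering disk. I would pick a small disk $D_0\subset\Omega_0$, mapped univalently by $f$, in such a way that $f(D_0)\subset\Omega_1$, then $f(D_0)$ is pushed by $\sigma_1$ down into a tiny disk $D_2\subset\D$, and then the next application of $f$, which uses the exponential-type piece on the unbounded face, sends $D_2$ back out to some far-away $\Omega_{n(1)}$. Iterating this pattern produces an infinite forward orbit $D_k=f^k(D_0)$ whose closures are pairwise disjoint; hence $D_0$ is wandering. By construction, whenever the orbit visits one of the far-out bounded faces $\Omega_{n(k)}$ one has $|f^{m_k}|_{D_0}|\to\infty$, while at the steps immediately following, the Blaschke factor $\sigma$ brings the image back into $\D$, giving a subsequence $f^{n_k}|_{D_0}\to a\in\C$. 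This is precisely the oscillating behaviour required.

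To obtain two grand orbits rather than one, I would exploit the freedom to place an independent, symmetric copy of the above configuration (for instance, along the negative real axis) so that two disjoint sequences of bounded faces and two disjoint wandering orbits are produced simultaneously. The disjointness of forward orbits, together with the fact that no point of one wandering orbit is a preimage of the other (because the images are designed to stay within the prescribed chain of faces), guarantees that the two grand orbits are genuinely distinct. The fact that $f\in\classB$ and that $S(f)\subset\D\cup\{-1,+1\}$ follows directly from the conclusion of Bishop's folding theorem, so no extra work is needed there.

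The main obstacle, as usual in folding constructions, is the quantitative matching between the conformal moduli of the faces $\Omega_n$, the normalizations of the $\tau_n$, and the hyperbolic geometry of the Blaschke-type $\sigma_n$. One must arrange that the image $\sigma_n\circ\tau_n(D_{n-1})$ lands precisely inside the next face $\Omega_n$ with enough room to continue the chain, while simultaneously respecting the $\tau$-length and adjacency conditions required by Bishop's theorem. In particular, the sizes of the successive disks must decrease sufficiently fast on the ``returning'' steps and grow sufficiently fast on the ``escaping'' steps so that both limit behaviours occur infinitely often along the same orbit; verifying these quantitative estimates is where the bulk of the technical work lies.
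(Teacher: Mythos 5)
Your overall strategy is the right one and matches the paper at the structural level: invoke Bishop's folding theorem, alternate between an unbounded face carrying an $\exp$- or $\cosh$-type piece (to push orbits outward) and a sequence of bounded disk-type faces (to bring them back), and manufacture a forward orbit of disks that escapes for increasingly long stretches and then returns, giving oscillation. However, the argument as sketched has two concrete gaps.

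First, the two-grand-orbit argument does not go through as stated. Placing an ``independent, symmetric copy along the negative real axis'' clashes with the natural even symmetry of the construction: the half-strip $S^+$ mapped by $z\mapsto\cosh(\lambda\sinh z)$ gives $f(-z)=f(z)$ (statement \textbf{(a)} of Theorem~\ref{thm:prototype}), so a wandering disk and its $z\mapsto -z$ copy have \emph{identical} images after one iterate and lie in a single grand orbit. Even if one broke this symmetry, two genuinely independent chains of faces would require a fresh argument that their forward and backward orbits never mix. The paper's approach is more economical and actually works: the disks $D_n$ are placed strictly off the real line (at height $\pi$), and the map is only conjugation-symmetric, $f(\bar z)=\overline{f(z)}$. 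A wandering disk $U_n$ therefore has $f^n(U_n)\subset\tfrac14\widetilde{D_n}$ entirely in the upper half-plane, forcing $U_n$ itself to be disjoint from $\R$; its conjugate $\overline{U_n}$ then sits in a symmetric Fatou component whose forward orbit lives in the lower half-plane and never meets that of $U_n$, yielding exactly two grand orbits with no extra construction.

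Second, the recursive inclusion $f^{n+1}(U_n)\subset U_{n+1}$ is the technical core of the proof and is not addressed. The obstruction is not merely ``matching moduli'': each adjustment of the Blaschke/critical-value data $w_{p_n}$ (and of $d_{p_n}$) changes the dilatation of $\sigma\circ\eta$ near $D_{p_n}$, hence the integrating quasiconformal map $\phi$ changes \emph{globally}, and with it $f$ on all of $\C$ --- including at steps of the orbit that were already ``settled.'' The paper controls this with the thin-support estimate (Theorem~\ref{thm:close_to_id} and Corollary~\ref{thm:bounds_derivative}), showing that by taking $d_{p_n}$ large the $n$-th correction to $\phi$ is smaller than $t_n$ on the disks of radius $1$ around $x_0,\ldots,x_{n+1}$, and then choosing $(t_n)$ summable to $\varepsilon$ so that the limit map still satisfies the nesting for every $n$. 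Without some version of this feedback control (Lemma~\ref{lem:Bishop_example} in the paper), the recursive construction you describe does not stabilize, and the existence of the oscillating wandering domain is not established.
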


We remark that Bishop's example is not a negative answer to the mentioned conjecture of Rempe-Gillen and Mihaljevi\'c-Brandt since infinitely many of the critical points are contained in the oscillating wandering domains. 


The first goal in this paper is to prove the following result concerning the map $f$ in Theorem \ref{thm:Bishop_example}.

\begin{thmalpha}\label{thm:A}
Let $f$ be given by Theorem \ref{thm:Bishop_example}. Then $f$ has  {\bf exactly} two grand orbits of wandering domains. 
\end{thmalpha}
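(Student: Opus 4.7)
The plan is to combine the general restrictions that class $\classB$ imposes on wandering domains with the explicit structure of Bishop's construction, and then to rule out any additional wandering domain by means of Theorem~\ref{thm:Rempe}. First, because $f\in\classB$, every Fatou component is simply connected and the identity $\Julia(f)=\overline{\mathcal{I}(f)}$ prevents any Fatou component from lying in the escaping set. Therefore any wandering domain $U$ of $f$ must be oscillating or orbitally bounded, and some convergent subsequence $(f^{n_k}|_U)_k$ tends locally uniformly to a finite limit $a\in\Julia(f)\cap E'$, where $E$ denotes the post-singular set and $E'$ its derived set.

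Second, I would describe $E$ precisely using Bishop's theorem. By property~(iii), every critical value of $f$ equals $\pm 1$, so the contribution to $E$ coming from critical orbits is exactly the forward orbit of $\{-1,+1\}$; the remaining contribution comes from the finitely many asymptotic values lying in $\D$. I would then track these finitely many asymptotic orbits using $f=\sigma\circ\tau$ outside a neighborhood of $T$ and classify each into one of three behaviors: landing inside one of the two known wandering grand orbits, being preperiodic, or escaping to infinity. Since Bishop's design already places infinitely many of the critical points producing the critical values $\pm 1$ along the two wandering grand orbits, the forward orbit of $\{-1,+1\}$ is the principal source of finite accumulation points of $E$.

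Third, from this classification I would read off $E'\cap\C$: escaping orbits contribute nothing in $\C$, preperiodic orbits contribute only isolated points, and so the finite cluster set of $E$ coincides with the finite cluster set of the two known oscillating grand orbits. Any hypothetical additional wandering domain $U$ would thus have some convergent subsequence tending to a point $a$ of this cluster set. At this stage I would apply Theorem~\ref{thm:Rempe} locally near $a$, after separating the contribution of the two known wandering grand orbits from the rest of the singular dynamics, to conclude that no further wandering component can accumulate at $a$; hence $U$ must itself eventually map into one of the two known grand orbits.

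The main obstacle is precisely this last step. Since $\pm 1$ themselves lie inside the two known wandering grand orbits, the post-singular set $E$ accumulates at the very points $a\in E'\cap\C$ that govern any candidate extra wandering domain, so Theorem~\ref{thm:Rempe} cannot be applied to the singular set as a whole. The delicate task is to isolate the dynamics of any putative extra wandering domain from that of the two known orbits at each finite cluster point, and to use the fine geometric control provided by Bishop's quasiconformal folding near the relevant tracts to forbid a third wandering component from hiding between the iterated copies of the two known ones. This quantitative separation is where the specific features of Bishop's example must genuinely be exploited.
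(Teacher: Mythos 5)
Your overall strategy (restrict limit functions using Fatou--Baker and class $\classB$ constraints, then invoke Theorem~\ref{thm:Rempe} to rule out extra wandering components) is the same as the paper's. However, the proposal contains misstatements about Bishop's construction that matter for the argument, and it stops short of the key step.

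First, the singular set is misdescribed. By Theorem~\ref{thm:prototype}(c), $f$ has \emph{no} asymptotic values at all, and its set of critical values is $\{-1,+1\}\cup\overline{\{w_n : n\geqslant 1\}}$. The $w_n$ are critical values coming from the centers of the $D$-components, not asymptotic values, and there are infinitely many of them. Property (iii) of Theorem~\ref{thm:Bishop_main} only concerns the critical points sitting on the graph $T$, whose critical values are $\pm 1$; it does not say every critical value is $\pm 1$. Second, $\pm 1$ do not lie in the wandering grand orbits: they lie on the real line and escape to infinity under iteration (as the proof of Lemma~\ref{lem:thmA_no_unexpected} uses), so they contribute nothing to $E'\cap\C$. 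The unique finite accumulation point of $S(f)$ is $\frac{1}{2}$, precisely because $w_n\to\frac{1}{2}$, and the relevant part of $E'$ is $\{f^\ell(\frac{1}{2})\}_{\ell\geqslant 0}$. This identification is what drives Lemma~\ref{lem:thmA_Baker}: the paper re-runs the argument of \cite{LimitFunctions} to pin the limit function down to $\frac{1}{2}$ (rather than an arbitrary point of $E'$), and in doing so also forces $f^{n_k-1}(W)\subset D_{m_k}^{\star}$.

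Third, and most importantly, you correctly identify that Theorem~\ref{thm:Rempe} cannot be applied to the full post-singular set because the singular values accumulate at the candidate limit point, but you do not propose a remedy; the proposal explicitly ends at this point. The paper's resolution is to build the explicit forward-invariant closed set
\[
A=\bigl]-\infty,-\tfrac12\bigr]\cup\bigl[\tfrac12,+\infty\bigr[\cup\bigcup_{n\geqslant 1}\bigcup_{k=0}^{n}\overline{f^{k}(U_{n}^{\star})},
\]
which contains all critical values (this is where the hypothesis that either $w_n=\frac{1}{2}$ or $f(\frac14 D_n)\subset\bigcup_m U_m$ enters), is invariant under $f$, and therefore makes $f:f^{-1}(U)\to U=\C\setminus A$ a covering of hyperbolic surfaces. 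Theorem~\ref{thm:Rempe} then gives the growing-hyperbolic-distance estimate, and the contradiction in Lemma~\ref{lem:thmA_no_unexpected} comes from observing that a wandering component disjoint from all $U_m^{\star}$ must nonetheless enter $\frac14\widetilde{D_{j_k}^{\star}}$ and hence be mapped into some $U_m^{\star}$. This is the genuine content of the proof, and it is missing from your proposal.
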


We informally say that $f$ has no {\em unexpected } wandering domains. To prove Theorem A we shall use some ideas developed in \cite{AbsenceWanderingDomains} (see Section \ref{sec:proof_thmA}).

Our second goal is to show what we believe is an interesting application of the previous result to the dynamics of composite functions. Given two holomorphic maps $f$ and $g$ we consider the relationship between the Fatou and Julia sets of $f$ and $g$, and that of $g\circ f$ or $f\circ g$. One particular case is when $f$ and $g$ commute. i.e., $f\circ g=g\circ f$. It was proven by Baker \cite{WanderingDomains} that if $f$ and $g$ are commuting rational maps  then $\Julia(f) =\Julia(g)$, but it is still an open problem to prove the same property for commuting entire transcendental functions. The best partial result is due to Bergweiler and Hinkkanen \cite{SemiconjugationEntireFunctions} who proved that $\Julia(f)=\Julia(g)$ whenever $f$ and $g$ have no wandering domains (their result implies this statement).

For the general case (that is without assumming that $f$ and $g$ commute),  Bergweiler and Wang \cite{CompositeEntireFunctions} proved that for every $z\in\C$, $z \in \Julia(f\circ g)$ if and only if $g(z)\in \Julia(g\circ f)$. Moreover if $U$ is a component of $\Fatou(f\circ g)$ and $V$ is the component of $\Fatou(g\circ f)$ that contains $g(U)$, then $U$ is wandering for $f\circ g$ if and only if $V$ is wandering for $g\circ f$. A natural question to ask is whether the composition of two maps may have wandering domains if none of the members do. Related to this question, Singh 
\cite[Theorem 4] {CompositionEntireFunctions} proved that there exist two transcendental entire functions $f$ and $g$ and a domain $U\subset \C$ such that $U$  lies in periodic Fatou components of $f$, $g$, and $g\circ f$, but lies in a wandering domain of $f\circ g$. Sing's result, proved by approximaton theory, does not provide any information about the possible existence of other wandering domains for $f$ or $g$, nor about the global nature of the maps. We give a complete answer to the problem by proving the following theorem.

\begin{thmalpha}\label{thm:B}
There exist two transcendental entire functions $f$ and $g$ in class $\classB$ such that $f\circ g$ has two grand orbits of wandering domains even though all Fatou components of $f$ and $g$ are preperiodic.
\end{thmalpha}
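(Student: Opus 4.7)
The strategy is to build $f$ and $g$ by a coordinated application of Bishop's quasiconformal folding---the construction technique underlying Theorem \ref{thm:Bishop_example}---so that the composition $f\circ g$ coincides with, or is a quasiconformal conjugate of, the Bishop map $B$ of Theorem \ref{thm:Bishop_example}, while each individual factor has all of its singular orbits captured by attracting cycles. With such a setup in place the theorem would follow immediately from the two ingredients already available: Theorem A applied to $f\circ g = B$ gives exactly two grand orbits of wandering domains for the composition, and the absence-of-wandering-domains criterion of Rempe-Gillen and Mihaljevi\'c-Brandt (Theorem \ref{thm:Rempe}) applied to $f$ and $g$ individually forces both maps to have only preperiodic Fatou components.

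Concretely, starting from Bishop's data producing $B$ (the unbounded bipartite graph $T\subset\C$, the conformal maps $\tau_j$ onto $\Hr$ or $\D$ defined on the components of $\C\setminus T$, and the associated holomorphic maps $\sigma_j$), I would realize $B$ as a composition $f\circ g$ by invoking Bishop's theorem twice, for two carefully chosen pairs of combinatorial data $(T_f,\tau^f,\sigma^f)$ and $(T_g,\tau^g,\sigma^g)$. The flexibility of Bishop's theorem---free choice of graph, of the conformal maps on its complementary components, and of the post-composition data---allows one to arrange simultaneously that $f,g\in\classB$, that their singular sets are contained in $\D\cup\{-1,+1\}$ (exactly as in Bishop's conclusion), and crucially that these singular sets sit inside basins of attracting cycles of $f$ (respectively of $g$). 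Once this is done, the singular orbits of $f$ and $g$ are confined to compact sets in their respective Fatou sets, hence Theorem \ref{thm:Rempe} applies and rules out wandering domains for $f$ and for $g$, so every Fatou component of either factor is preperiodic, while $f\circ g$ inherits the Bishop dynamics and Theorem A supplies the count of grand orbits of wandering domains.

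The serious work lies in the joint design in the previous step. Each critical point of $f\circ g$ is either a critical point of $g$ or a $g$-preimage of a critical point of $f$; in Bishop's example the wandering behaviour is driven by infinitely many critical points lying in the wandering components of $B$. Reconstructing those critical points of $f\circ g$ through the chain rule, and positioning them inside the wandering components of the composition, while at the same time keeping the underlying critical points of $f$ and of $g$ inside attracting basins of the individual maps $f$ or $g$, is the delicate balancing act. This is where the choice of the graphs $T_f$, $T_g$ and of the $(\tau,\sigma)$-data in the two applications of Bishop's theorem becomes intricate, and it is where essentially all the technical content of the proof will be concentrated; the verification that the resulting entire functions actually compose to a Bishop map will rely on a quasiconformal surgery argument to absorb any mismatch in the neighborhoods of $T_f$ and $T_g$ where Bishop's folding modifies $\sigma\circ\tau$.
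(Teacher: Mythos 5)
Your proposal and the paper diverge at the most fundamental level: you want to \emph{factor} the Bishop map $B$ of Theorem~\ref{thm:Bishop_example} as $f\circ g$, whereas the paper never attempts a factorization. Instead the paper builds $f$ and $g$ as two \emph{separate} instances of the prototype map from Theorem~\ref{thm:prototype} (each with its own half-strip $S^+$, disks $D_n$, and critical-value parameters $(w_n^f)$, $(w_n^g)$), and coordinates the choices of $w_n^f$ and $w_n^g$ so that the common disks $U_n=U_n^f\cap U_n^g$ cycle back on themselves under $f$ alone and under $g$ alone, but march forward under the alternating composition $f\circ g$. Periodicity for each factor and wandering for the composition are thus produced from a single combinatorial scheme, and the ``no unexpected wandering domains'' step for $f$ and $g$ is then exactly the argument of Section~\ref{sec:thmA}, transported verbatim (Lemma~\ref{lem:thmB_no_unexpected}).

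The gap in your approach is that the factorization step has no content: there is no version of Bishop's theorem (or of anything in the paper) that takes a prescribed entire function $B$ and produces two entire maps $f,g\in\classB$ with $f\circ g = B$, and factoring transcendental entire functions is a notoriously rigid problem. Your suggestion to ``invoke Bishop's theorem twice'' builds two maps independently, but there is no mechanism forcing their composition to equal (or be quasiconformally conjugate to) $B$, and the phrase ``quasiconformal surgery to absorb any mismatch'' does not close this---the mismatch is dynamical, not just a deformation of the complex structure, and any surgery would alter $f$ and $g$ away from the maps whose individual dynamics you have already committed to. Compounding this, you want the singular orbits of $f$ and $g$ captured by attracting cycles; but the prototype maps have the real orbit of $1/2$ escaping to infinity (this is precisely condition~(\ref{eq:parameters_assumption}) and Lemma~\ref{lem:construction_Un}), and there is no indication how to retain the wandering behaviour of the composition while killing the escaping singular orbit of each factor. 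You also cite Theorem~\ref{thm:Rempe} as ``the absence-of-wandering-domains criterion,'' but that theorem is the hyperbolic-distance estimate used in Section~\ref{sec:thmA}, not a criterion about singular values trapped in attracting basins; the latter would be a different (hyperbolicity-type) argument that the paper deliberately avoids because its maps are not hyperbolic. In short, the balancing act you correctly identify as ``where essentially all the technical content will be concentrated'' is exactly the part that is missing, and the paper sidesteps it entirely by a different decomposition of the problem.
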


The proof of Theorem B (see Section \ref{sec:thmB}) consists of a subtle modification of Bishop's construction in Theorem \ref{thm:Bishop_example}, and additionally uses also Theorem A. In order to present the proof properly we previously explain Bishop's example (see Section \ref{sec:Bishop_example}), adding some explicit arguments and details to the original construction, in places where we believe they can help the reader to understand the delicate construction.

The structure of the paper is as follows. In Section \ref{sec:preliminaries} we introduce notation, state Bishop's general folding theorem and Rempe-Gillen-Mihaljevi\'c-Brandt's key  lemma, together with some useful results in complex analysis that are used throughout the paper. In Section \ref{sec:Bishop_example} we revise Bishop's wandering example. Finally, theorems A and B are proved in Sections \ref{sec:thmA} and \ref{sec:thmB}, respectively.


\subsection*{Acknowledgements}

We heartfully thank Chris Bishop for his patience and kindness in answering all our questions about his remarkable result. We also wish to thank Lasse Rempe, Phil Rippon, Gwyneth Stallard and Toni Garijo for many helpful discussions.


\section{Preliminaries}\label{sec:preliminaries}


\subsection{Bishop's theorem}\label{sec:Bishop_theorem}

This subsection is mainly devoted to state a slightly simplified version of Bishop's Theorem \ref{thm:Bishop_example} \cite[Theorem 7.1]{QuasiconformalFoldings} which is enough for our purposes. The goal of this theorem is to provide sufficient conditions to guarantee the existence of  entire functions in class $\classB$ with a prescribed distribution of critical points.   We refer the reader to the original source  \cite{QuasiconformalFoldings} for the proof and for a much deeper discussion of its contents, motivation and difficulties. 

As we mentioned in the introduction, Bishop applied this theorem to provide a long list of interesting examples (and counter-examples) of many different phenomena. We shall discuss one of them, concerning the existence of wandering domains in class $\classB$, in plenty of detail  in  Section \ref{sec:Bishop_example}.   

Let $T$ be an unbounded connected bipartite graph with vertice labels in $\{-1,+1\}$. Then the connected components of $\C\setminus T$ are simply connected domains in $\C$. We denote by $R$-components (respectively $D$-components) the unbounded (respectively bounded) components of $\C\setminus T$. We define a neighborhood of the graph given by
$$
T(r):=\bigcup_{e\text{ edge of }T}\Big\{z\in\C\ |\ \dist(z,e)<r\diam(e)\Big\},
$$
where $\dist$ and $\diam$ denote the euclidian distance and diameter respectively. 

\begin{dfn}\label{def:unif_bounded_geom}
We say that $T$ has \emph{uniformly bounded geometry} if there exists an universal constant $M\geqslant 1$ such that
\begin{description}
	\item[(i)] the edges of $T$ are $\mathcal{C}^{2}$ with uniform bounds;
	\item[(ii)] the angles between two adjacent edges are uniformly bounded away from zero;
	\item[(iii)] for every two adjacent edges $e$ and $e'$, $\frac{1}{M}\leqslant\frac{\diam(e)}{\diam(e')}\leqslant M$;
	\item[(iv)] for every two non-adjacent edges $e$ and $e'$, $\frac{\diam(e)}{\dist(e,e')}\leqslant M$.
\end{description}
\end{dfn}

We denote by $\Hr=\{z=x+iy\in\C\,|\,x>0\}$ the right half plane and by $\D=\{z\in\C\,|\,|z|<1\}$ the unit disk. For each connected component $\Omega_{j}$ of $\C\setminus T$, let $\tau_{j}:\Omega_{j}\to \Delta_j$ be the Riemann map where $\Delta_j=\Hr$ or $\Delta_j=\D$ depending on whether $\Omega_{j}$ is an unbounded or bounded component. We shall denote by $\tau$ the global map defined on $\cup_j \Omega_j$ such that $\tau|_{\Omega_j} = \tau_j$. Each edge $e$ of $T$ is the common boundary of at most two complementary domains but corresponds via $\tau$ to exactly two intervals on $\partial \Hr$, or one interval on $\partial \Hr$ and one in $\partial \D$ (see condition (i) in Theorem \ref{thm:Bishop_main}). So it makes sense to define the \textit{$\tau$-size} of an edge $e$ as the minimum among the lengths of the two images of $e$ by $\tau$. 
We call $\Delta=\Delta_j$  the {\em standard domain} for $\Omega_{j}$. Next step is to build a map from the standard domain to $\C$. 

If $\Delta=\D$ we define $\sigma:\D\to\D$ to be the  map $z\to z^{d}$  with $d\geqslant 2$, possibly followed by a quasiconformal map $\rho:\D\to\D$ which sends $0$ to some $w\in\D$  and it is the identity on $\partial\D$. The resulting map $\sigma\circ\tau:\Omega_{j}\to\D$ will have $w$ as a critical value.

If $\Delta=\Hr$ the definition of $\sigma:\Hr\to\C$ is more delicate. We first divide $\partial\Hr$ into intervals $I$ of length $2\pi$ and extremes (or vertices) in $2\pi i\Z$. These intervals will correspond, after Bishop's folding construction, to the images of the edges of $T'\supset T$ (where $T'$ is $T$ plus some \textit{decorations} all of them contained  in a sufficiently small neighborhood $T(r_{0})$ of $T$) by a suitable quasiconformal deformation $\eta$ of $\tau$. Second we define $\sigma$ on $\partial\Hr$. There will be only two cases to consider: either $I$ is identified with a common arc of two $R$-components in which case we define $\sigma(iy):=\cosh(iy)$ for every $iy\in I$, or $I$ is identified with as a common arc of one $R$-component and one $D$-component, in which case we define $\sigma(iy):=\exp(iy)$ for every $iy\in I$. Finally we extend $\sigma$ to $\Hr$ as a quasiconformal map which equals $\cosh(x+iy)$ for $x>2\pi$.

\begin{thm}[{\cite[Theorem 7.1]{QuasiconformalFoldings}}]\label{thm:Bishop_main} Let $T$ be an unbounded connected graph and let $\tau$ be a conformal map defined on each complementary domain $\C\setminus T$, as above. Assume that
\begin{description}
\item[(i)] $D$-components only share edges with $R$-components;
\item[(ii)] $T$ is bipartite and has uniformly bounded geometry;
\item[(iii)] the map $\tau$ on a $D$-component with $2n$ edges maps the vertices to $2n$-th roots of unity;
\item[(iv)]  on $R$-components the $\tau$-sizes of all edges is $\geqslant\pi$ (it is enough that the $\tau$-sizes are uniformly bounded from below).
\end{description}
Then there is $r_{0}>0$, an entire transcendental function $f$, and a $K$-quasiconformal map $\phi$ of the plane, with $K$ depending only in $M$,  so that $f\circ\phi^{-1}=\sigma\circ\tau$ off $T(r_{0})$. Moreover, the only singular values of $f$ are $\pm 1$ (critical values coming from the vertices of $T$) and the critical values assigned by the $D$-components.
\end{thm}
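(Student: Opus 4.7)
The plan is to follow Bishop's quasiconformal folding strategy: build a quasiregular model $g:\C\to\C$ that coincides with $\sigma\circ\tau$ outside a thin neighborhood $T(r_0)$ of $T$, and then apply the Measurable Riemann Mapping Theorem to straighten $g$ into an entire function $f$.

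The first step is to set up the naive model $F_0 := \sigma_j\circ\tau_j$ on each complementary component $\Omega_j$. On an $R$-component, $F_0$ is (up to the quasiconformal extension of $\sigma$ to $\Hr$) essentially a universal covering $\Omega_j\to\C\setminus[-1,1]$ with logarithmic singularities over $\pm 1$; on a $D$-component it is a branched cover of $\D$ with a single prescribed critical value. The obstruction to $F_0$ being globally continuous is that on each edge $e$ of $T$ the two boundary parametrizations inherited from the two adjacent pieces of $\tau$ are generally mismatched: the $\tau$-images of $e$ on $\partial\Hr$ or $\partial\D$ are arcs of incompatible length, so the two traces of $\sigma\circ\tau$ on $e$ disagree.

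The core of the construction, and the step I expect to be the main obstacle, is to repair these mismatches along $T$; this is the \emph{folding} that gives the theorem its name. One performs two coordinated modifications confined to a thin strip around $T$:
\begin{description}
    \item[(a)] enrich $T$ to a bipartite graph $T'\supset T$ by attaching finite tree-shaped \emph{decorations} inside the strip;
    \item[(b)] replace $\tau_j$ by $\eta_j\circ\tau_j$, where $\eta_j$ is a quasiconformal self-homeomorphism of the standard domain $\Delta_j$, supported in a boundary collar, designed so that the images of the vertices of $T'$ under $\eta_j\circ\tau_j$ land at prescribed lattice positions on $\partial\Delta_j$: in $2\pi i\Z$ on $\partial\Hr$, and at the $2n$-th roots of unity on $\partial\D$ (the latter being hypothesis (iii)).
\end{description}
Hypothesis (iv) guarantees that on $R$-components there is enough $\tau$-room (size $\geqslant\pi$ per edge) to interpolate the decorations. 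The pay-off is that $\cosh$ folds each interval $[2\pi ki,2\pi(k+1)i]\subset\partial\Hr$ onto $[-1,1]$, identifying its two halves: the two endpoints of every decoration are thereby glued together and no new singular values are introduced. An analogous matching on $R$--$D$ interfaces is supplied by $\exp$ on the corresponding intervals. Uniformly bounded geometry of $T$, combined with a Beurling--Ahlfors-type quasisymmetric extension argument on each $\Delta_j$, bounds the dilatation of $\eta_j$ by a constant depending only on $M$; the delicate combinatorial-analytic task of placing the decorations consistently along all edges while preserving these uniform bounds is the heart of Bishop's original proof.

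Once the pieces are patched, the map $g := \sigma\circ(\eta\circ\tau)$ extends continuously across $T'$, is holomorphic off $T(r_0)$, and is a quasiregular self-map of $\C$ whose Beltrami coefficient has $L^\infty$ norm bounded uniformly in $M$. By the Measurable Riemann Mapping Theorem, there exists a quasiconformal homeomorphism $\phi$ of the plane whose Beltrami coefficient matches $\mu_g$, and then $f:=g\circ\phi^{-1}$ is entire; since $\eta_j$ is the identity outside the boundary collar, $g$ agrees with $\sigma\circ\tau$ off $T(r_0)$, giving the identity $f\circ\phi^{-1}=\sigma\circ\tau$ stated in the theorem (after relabelling the straightening map as in the statement). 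The transcendence of $f$ follows from the logarithmic ends over $\pm 1$ provided by the $R$-components. Finally, the singular set of $f$ is the image under the conformal map $\phi$ of the singular set of the model $\sigma$, namely $\pm 1$ (critical values over the vertices of $T$ coming from $\cosh$, together with the asymptotic values at the logarithmic ends) and the prescribed critical values contributed by the $D$-components; the quasiconformal constant $K$ depends only on $M$ because every dilatation introduced above does.
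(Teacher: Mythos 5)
Be aware that the paper itself does not prove Theorem \ref{thm:Bishop_main}: it is quoted (in a slightly simplified form) from Bishop's work, and the authors explicitly refer the reader to \cite{QuasiconformalFoldings} for the proof, adding only the one-paragraph gloss at the end of Section \ref{sec:Bishop_theorem}. Your outline matches that gloss, and, at the same level of resolution, it matches the actual architecture of Bishop's argument: construct a quasiregular model $g=\sigma\circ\eta\circ\tau$ by adding decorations $T'\supset T$ inside a thin collar and replacing $\tau$ by a quasiconformally corrected version $\eta\circ\tau$ whose boundary values land on the $2\pi i\Z$ lattice or the roots of unity; check continuity of $g$ across $T'$; bound the dilatation in terms of the bounded-geometry constant $M$; and then integrate the induced Beltrami coefficient via the Measurable Riemann Mapping Theorem. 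Because $\eta$ is supported in the collar, $g$ agrees with $\sigma\circ\tau$ off $T(r_0)$, and because precomposition with a homeomorphism does not change singular values, $S(f)=S(g)\subset\{-1,+1\}\cup\{\text{prescribed }D\text{-component critical values}\}$.

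That said, what you have written is a summary, not a proof, and you say so yourself: the step you defer (building $T'$ and $\eta$ simultaneously, along all edges, so that $\sigma\circ\eta$ is globally continuous and its dilatation depends only on $M$, while the $\cosh$- and $\exp$-foldings identify exactly the boundary arcs they are supposed to without producing extra singular values) \emph{is} the theorem; it occupies several sections of \cite{QuasiconformalFoldings} and none of it is reconstructed here. Two smaller technical remarks. First, with $f:=g\circ\phi^{-1}$ you get $f\circ\phi=\sigma\circ\tau$ off $T(r_0)$, not $f\circ\phi^{-1}=\sigma\circ\tau$; you wave at a relabelling, but you should say outright that the map called $\phi$ in the statement is the inverse of your integrating map. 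Second, the singular set of $f$ is not ``the image under the conformal map $\phi$'' of anything: $\phi$ is quasiconformal, not conformal, and more to the point it acts in the \emph{domain} while singular values sit in the \emph{target}, so they are simply unchanged under $g\mapsto g\circ\phi^{-1}$. The conclusion $S(f)=S(g)$ is correct, but for that reason, not the one you offer.
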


The main difficulty in Bishop's construction is to define $\eta$ (and $T'$) in such a way that $\sigma\circ\eta$ is a quasiregular map, or equivalently, to show that $\sigma\circ\eta$ is continuous across $T'$. Once this is done, the Measurable Riemann mapping Theorem can be applied to obtain $\phi$, which integrates the almost complex structure induced by $\sigma\circ\eta$, so that $f:=\sigma\circ\tau\circ\phi$ is entire and in class $\classB$.

In Section \ref{sec:Bishop_example}, we shall see how Bishop applies and modifies this construction to provide an example of a map in class $\classB$ with wandering domains. 


\subsection{Other tools}

The following results are used in Section \ref{sec:Bishop_example}. The first one is the well-known Koebe's distorsion Theorem.

\begin{thm}[{\cite[Theorem 1.6 and Corollary 1.4]{PommerenkeBook}}]\label{thm:Koebe}
Let $F$ be a univalent function in the unit disk $\D$ such that $F(0)=0$ and $F'(0)=1$. Then  
\begin{description}
\item[(a)] $\forall z\in\D,\quad\dfrac{|z|}{(1+|z|)^{2}}\leqslant|F(z)|\leqslant\dfrac{|z|}{(1-|z|)^{2}}$;
\item[(b)] $\forall z\in\D,\quad\dfrac{1-|z|}{(1+|z|)^{3}}\leqslant|F'(z)|\leqslant\dfrac{1+|z|}{(1-|z|)^{3}}$;
\item[(c)] $\dfrac{1}{4}\leqslant\dist(0,\partial F(\D))\leqslant 1$. In particular $F(\D)$ contains a disk of radius $1/4$.
\end{description}
\end{thm}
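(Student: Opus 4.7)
The plan is to follow the classical route through Gronwall's Area Theorem and Bieberbach's coefficient inequality, reducing all three conclusions to the single bound $|a_2|\leqslant 2$ for the second Taylor coefficient of any normalized univalent function $F(z)=z+a_2z^2+\cdots$ on $\D$. Each of the three statements is invariant (up to a M\"obius transport on the source and a post-composition normalization on the target) under pre-composition with a disc automorphism, so a pointwise estimate at $z\in\D$ can always be converted into the coefficient estimate at the origin.

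First I would establish the area theorem: if $g(\zeta)=\zeta+\sum_{n\geqslant 0}b_n\zeta^{-n}$ is univalent on $\{|\zeta|>1\}$, then applying Green's theorem to the complement of $g(\{|\zeta|>r\})$ and letting $r\to 1^{+}$ yields $\sum_{n\geqslant 1}n|b_n|^2\leqslant 1$. Bieberbach's inequality $|a_2|\leqslant 2$ then follows from the square-root trick: the odd extension $h(z)=\sqrt{F(z^2)}=z+(a_2/2)z^3+\cdots$ is univalent on $\D$, and $g(\zeta)=1/h(1/\zeta)=\zeta-(a_2/2)\zeta^{-1}+\cdots$ is univalent on $\{|\zeta|>1\}$, so the area theorem forces $|a_2/2|^2\leqslant 1$. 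Part (c) is an immediate corollary: if $w_0\notin F(\D)$, then $G(z)=w_0F(z)/(w_0-F(z))$ is normalized and univalent on $\D$ with second coefficient $a_2+1/w_0$, so $|a_2+1/w_0|\leqslant 2$ combined with $|a_2|\leqslant 2$ yields $|w_0|\geqslant 1/4$.

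For parts (a) and (b), apply Bieberbach to the transported function $H(w)=(F(\varphi(w))-F(z_0))/((1-|z_0|^2)F'(z_0))$, where $\varphi(w)=(w+z_0)/(1+\bar{z_0}w)$ is the disc automorphism sending $0$ to $z_0$. A chain-rule computation gives
\[H''(0)=\frac{(1-|z_0|^2)F''(z_0)}{F'(z_0)}-2\bar{z_0},\]
so the inequality $|a_2(H)|\leqslant 2$ becomes the infinitesimal distortion bound
\[\left|\frac{(1-|z|^2)F''(z)}{F'(z)}-2\bar{z}\right|\leqslant 4, \qquad z\in\D.\]
Taking the real part of this along the radius through $z=re^{i\theta}$ gives a two-sided estimate on the radial derivative of $\log|F'(re^{i\theta})|$; integrating once in $r$ yields (b), and a further radial integration of (b) yields (a). The main obstacle is bookkeeping in this last step: one must convert the single complex inequality into precisely the correct pair of real bounds so that both sides of (a) and (b) come out with the symmetric $(1\pm|z|)$ denominators. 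A useful sanity check is that the Koebe function $k(z)=z/(1-z)^2$ realizes equality simultaneously in (a), (b), and (c), so no estimate along the way can afford to be wasteful.
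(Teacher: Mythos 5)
The paper does not prove Theorem~\ref{thm:Koebe} at all: it is quoted directly from Pommerenke's book and used as a black box, so there is no internal proof to compare against. Your proposal is the standard textbook derivation (Area Theorem $\Rightarrow$ Bieberbach's $|a_2|\leqslant 2$ $\Rightarrow$ invariant coefficient inequality at $z_0$ $\Rightarrow$ radial integration), and it is essentially correct; this is also the route Pommerenke himself takes. Two small points worth tightening. First, you prove only the lower bound $\dist(0,\partial F(\D))\geqslant 1/4$ in~(c); the upper bound $\dist(0,\partial F(\D))\leqslant 1$ is a separate Schwarz-lemma argument (if $D(0,r)\subset F(\D)$ then $w\mapsto F^{-1}(rw)$ is a self-map of $\D$ fixing $0$ with derivative $r$ at the origin, forcing $r\leqslant 1$). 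Second, the passage from (b) to the \emph{lower} bound in (a) is not a straightforward radial integration and deserves a sentence: if $|F(z)|\geqslant 1/4$ one is already done since $|z|/(1+|z|)^2<1/4$ on $\D$; otherwise, by the already-proved part~(c), the segment $[0,F(z)]$ lies in $F(\D)$, and one integrates the lower bound of~(b) along its preimage curve $\gamma=F^{-1}([0,F(z)])$, using that $|\gamma'(t)|\geqslant\bigl|\tfrac{d}{dt}|\gamma(t)|\bigr|$ and that the bound $(1-s)/(1+s)^3$ is decreasing in $s=|\gamma(t)|$. This is a genuinely different step from the naive radial integration that gives the upper bound, and it uses~(c) as an input, so the logical order (Bieberbach $\to$ (c) $\to$ (b) $\to$ (a)) matters.
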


As explained above the key idea behind Theorem \ref{thm:Bishop_main} is to obtain the desired entire function $f$ as the composition of a quasiregular map $\sigma\circ\eta$, and the quasiconformal map $\phi$ given by the Riemann mapping Theorem, that is $f:=(\sigma\circ\eta)\circ\phi$. In particular, $f$ and $\sigma\circ\eta$ are not conjugate to each other. As it turns out, we shall have an explicit expression for $\sigma\circ\eta$,  at least in the domains where the relevant dynamics occurs so, in order to have a control on the dynamics of $f$ we need certain control on $\phi$. The results that follow  will be used to show that,  in the cases we are interested in,  we may assume that $\phi$ is arbitrarily close to the identity. 

\begin{dfn}
A measurable set $E\subset\R^{2}$ is said to be $(\varepsilon,h)$-thin if $\varepsilon>0$, $h:[0,\infty)\to[0,1]$ is a decreasing function such that
$$
\forall n\geqslant 1,\ \int_{0}^{+\infty}h(r)r^{n}dr<+\infty,
$$
and
$$
\forall z\in E,\ \area\left(E\cap D(z,1)\right)\leqslant\varepsilon h(|z|)
$$
where $D(z,1)$ is the euclidean disk centered at $z$ of radius 1.
\end{dfn}

Notice that $h(r):=\exp(-ar)$ with $a>0$ satisfies the conditions on $h$.

Roughly speaking the next result states that if $\Phi:\R^{2}\to\R^{2}$ is a $K$-quasiconformal map with dilatation $\mu$ supported on a \textit{small} planar set then we may expect  $\Phi$ to be close to a conformal map in $\mathbb C$, and so, close to the identity after normalization. 

\begin{thm}[Bishop, personal communication]\label{thm:close_to_id}
Suppose $\Phi:\R^{2}\to\R^{2}$ is $K$-quasiconformal and is normalized to fix 0 and 1. Let $E$ be the support of the dilatation of $\Phi$ (possibly unbounded) and assume that $E$ is $(\varepsilon,h)$-thin. Then
\begin{equation}\label{eq:close_to_id}
	\forall z,w\in\R^{2},\ (1-Ce^{\beta})|z-w|-Ce^{\beta}\leqslant|\Phi(z)-\Phi(w)|\leqslant(1+Ce^{\beta})|z-w|+e^{\beta}
\end{equation}
where $C$ and $\beta$ only depend on $K$ and $h$.
\end{thm}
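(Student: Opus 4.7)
The plan is to use the explicit integral representation of a normalized $K$-quasiconformal map through its Beltrami coefficient, and then convert the standard $L^p$ bounds from Ahlfors-Bers theory into pointwise estimates by exploiting the thinness of the dilatation support $E$. Setting $\mu=\bar\partial\Phi/\partial\Phi$, one has $\|\mu\|_\infty\leqslant k=(K-1)/(K+1)<1$ and $\supp\mu\subset E$; since $\Phi$ fixes $0$ and $1$, the Measurable Riemann Mapping Theorem with normalization produces a unique principal solution admitting the Cauchy-type representation
\[
\Phi(z)-z \;=\; -\frac{1}{\pi}\int_{\C}\frac{\nu(\zeta)}{\zeta-z}\,dA(\zeta)\;+\;(a_0+a_1 z),
\]
where $\nu$ is given by the Neumann series $\nu=\mu+\mu T\nu$ with $T$ the Beurling-Ahlfors transform, and $(a_0+a_1 z)$ is the affine correction enforcing the normalization. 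Boundedness of $T$ on $L^p$ for $p$ close to $2$ (depending only on $k$) yields $\|\nu\|_p\leqslant C(K)\|\mu\|_p$, while the distributional identities $\partial\Phi-1=T\nu$ and $\bar\partial\Phi=\nu$ reduce matters to a pointwise control of Cauchy-type and Beurling-type integrals of densities supported on $E$.

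The heart of the argument will be a pointwise bound for such integrals on an $(\varepsilon,h)$-thin set. For any $z\in\C$ I would split
\[
\int_E\frac{dA(\zeta)}{|\zeta-z|}\;\leqslant\;\int_{E\cap D(z,1)}\frac{dA(\zeta)}{|\zeta-z|}\;+\;\int_{E\setminus D(z,1)}\frac{dA(\zeta)}{|\zeta-z|},
\]
bounding the local piece by H\"older's inequality on the singular kernel together with $\area(E\cap D(z,1))\leqslant\varepsilon h(|z|)$, and the far-field piece by covering $\C\setminus D(z,1)$ with unit disks $D(\zeta_j,1)$ centered on an integer lattice and summing the contributions, using $\area(E\cap D(\zeta_j,1))\leqslant\varepsilon h(|\zeta_j|)$ on each. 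The moment conditions $\int_0^{\infty}h(r)r^n\,dr<\infty$ are exactly what make the resulting sum converge uniformly in $z$. The same argument applied to the (essentially bounded) Beurling-Ahlfors kernel provides pointwise bounds on $\partial\Phi-1$ and $\bar\partial\Phi$ valid away from $E$.

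Finally, to obtain the bi-Lipschitz-type estimate \eqref{eq:close_to_id}, I would integrate $d\Phi-\mathrm{id}$ along the segment from $z$ to $w$, using the pointwise Cauchy bound above to absorb any singular contribution arising from passages through the thin set $E$ itself into the additive term $e^\beta$; the affine correction enforcing the normalization accounts for the multiplicative factor $(1+Ce^\beta)$. The main obstacle I anticipate will be maintaining uniformity of the constants as $z$ or $w$ recede to infinity: the decay encoded in $h$ must dominate the singularity and growth of the Cauchy kernel at infinity, and balancing the local singular contribution against the far-field tail in a way that involves only $K$ and $h$ is precisely what forces the strong hypothesis that every polynomial moment of $h$ is finite.
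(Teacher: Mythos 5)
The paper does not actually prove this theorem: it is cited as ``[Bishop, personal communication]'' and invoked as a black box, so there is no proof in the paper against which to compare your argument. I will therefore assess your proposal on its own merits.

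Your overall strategy --- representing $\Phi$ through the Cauchy transform of the Neumann series $\nu=\mu+\mu T\mu+\dots$, and converting the $(\varepsilon,h)$-thinness of $E$ into a uniform pointwise bound on $C\nu$ by splitting into a near-field piece (H\"older against the $1/|\zeta-z|$ singularity) and a far-field piece (covering by unit disks, using the polynomial moment conditions on $h$ to sum the tail) --- is the right idea, and the observation that matching the two normalizations $\Phi(0)=0$, $\Phi(1)=1$ against the principal solution introduces an affine correction whose linear coefficient is $O(e^{\beta})$ explains the multiplicative factor $(1\pm Ce^{\beta})$ correctly. Two remarks. First, a minor point: the thinness hypothesis bounds $\area(E\cap D(z,1))$ only for $z\in E$; to use it for arbitrary $z$ you need a short covering argument (if $E\cap D(z,1)\neq\emptyset$, cover it by a bounded number of unit disks centered on $E$).

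The genuine gap is the final step, where you propose to \emph{integrate} $d\Phi-\mathrm{id}$ along the segment $[z,w]$ and ``absorb any singular contribution arising from passages through the thin set $E$ into the additive term.'' This does not work: $(\varepsilon,h)$-thinness is an \emph{area} condition and places no restriction whatsoever on the one-dimensional trace of $E$ along a line; the segment $[z,w]$ could lie entirely inside $E$, on which $d\Phi-\mathrm{id}$ is not small (only $K$-quasiconformal-bounded), so the integrated error would be comparable to $|z-w|$, not to a uniform constant. Fortunately the path integration is unnecessary. Your step (a) already produces a \emph{uniform pointwise} bound $\bigl|\Phi(z)-z-(a_{0}+a_{1}z)\bigr|\leqslant M$ with $M,|a_{0}|,|a_{1}|=O(e^{\beta})$ (the smallness of $a_{0}$ and $a_{1}$ coming from the normalization $\Phi(0)=0$, $\Phi(1)=1$ and the smallness of $C\nu$ at the two base points). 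Plugging this into the triangle inequality gives
\[
\bigl|\,|\Phi(z)-\Phi(w)|-|1+a_{1}|\,|z-w|\,\bigr|\leqslant 2M,
\]
which is exactly the claimed estimate. You should replace the segment-integration paragraph with this direct deduction from the pointwise bound.
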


As a consequence of the above result, under suitable conditions on the behaviour and smoothness of $\Phi$ in a neighborhood of the real line, we may easily deduce some bounds on the derivative of the map that we will directly apply later.  

\begin{coro}\label{thm:bounds_derivative}
Suppose $\Phi:\R^{2}\to\R^{2}$ is a $K$-quasiconformal map which is conformal in the strip $S=\{x+iy\in\C\,|\,|y|<1\}$ and satisfies $\Phi(0)=0$, $\Phi(1)=1$ and $\Phi(\R)=\R$. Let $E$ be the support of the dilatation of $\Phi$ and assume that $E$ is $(\varepsilon,h)$-thin. If $\varepsilon$ is sufficiently small (depending on $K$ and $h$), then 
$$
\forall x\in\R,\ \frac{1}{C}\leqslant|\Phi'(x)|\leqslant C
$$ 
where $C>0$ only depends on $K$, $h$ and $\varepsilon$ (but $C$ does not depends on the particular map $\Phi$). Moreover if we fix $K$ and 
$h$ then $C\to 1$ as $\varepsilon\to 0$.
\end{coro}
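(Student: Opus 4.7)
The plan is to combine Koebe's distortion theorem (Theorem \ref{thm:Koebe}) with the close-to-identity estimate of Theorem \ref{thm:close_to_id}. The conformality of $\Phi$ on the strip $S$ is what lets Koebe enter: for each $x\in\R$, the disk $D(x,1)$ is contained in $S$, so the normalized function
$$F(\zeta):=\frac{\Phi(x+\zeta)-\Phi(x)}{\Phi'(x)},\quad\zeta\in\D,$$
is univalent with $F(0)=0$ and $F'(0)=1$. Applying Theorem \ref{thm:Koebe}(a) to $F$ at a real point $\zeta=r\in(0,1)$ yields
$$\frac{(1-r)^{2}}{r}|\Phi(x+r)-\Phi(x)|\leqslant|\Phi'(x)|\leqslant\frac{(1+r)^{2}}{r}|\Phi(x+r)-\Phi(x)|.$$

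Since $\Phi(\R)=\R$ and $\Phi$ fixes $0$ and $1$, Theorem \ref{thm:close_to_id} applies directly to the pair of real points $(x+r,x)$ and gives
$$(1-Ce^{\beta})r-Ce^{\beta}\leqslant|\Phi(x+r)-\Phi(x)|\leqslant(1+Ce^{\beta})r+e^{\beta}.$$
Combining the two displayed inequalities produces
$$\frac{(1-r)^{2}}{r}\bigl[(1-Ce^{\beta})r-Ce^{\beta}\bigr]\leqslant|\Phi'(x)|\leqslant\frac{(1+r)^{2}}{r}\bigl[(1+Ce^{\beta})r+e^{\beta}\bigr],$$
an estimate which depends only on $K,h,\varepsilon$ and on the free parameter $r$, and is independent of $x$ and of the particular map $\Phi$. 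Choosing any fixed $r\in(0,1)$ and assuming $\varepsilon$ is small enough so that both bracketed expressions are comparable to $r$ already delivers a constant $C=C(K,h,\varepsilon)$ with $1/C\leqslant|\Phi'(x)|\leqslant C$.

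To obtain the refined statement $C\to 1$ as $\varepsilon\to 0$, I would optimize $r$: the multiplicative factors $(1\pm r)^{2}$ want $r$ small, while the additive errors $e^{\beta}/r$ want $r$ large compared with $e^{\beta}$. Balancing these (for example $r=\sqrt{e^{\beta}}$) forces both contributions to zero as the thinness parameter $\varepsilon$ shrinks, collapsing the two-sided estimate onto $|\Phi'(x)|=1+o(1)$, uniformly in $x\in\R$ and in $\Phi$. The main obstacle I anticipate is really a bookkeeping issue with Theorem \ref{thm:close_to_id}: one must extract from its proof that the quantity $Ce^{\beta}$ (or whatever effectively replaces it in the $(\varepsilon,h)$-thin regime) tends to zero as $\varepsilon\to 0$ with $K$ and $h$ held fixed, since this is what drives the limit $C\to 1$ in the corollary. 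Once that $\varepsilon$-dependence is in hand, everything else is the Koebe calculation above.
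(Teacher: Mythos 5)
Your argument is correct and is in essence the paper's: normalize $\Phi$ on $D(x,1)\subset S$, apply Koebe, and close the loop with the close-to-identity estimate (\ref{eq:close_to_id}). The minor differences are which part of Theorem \ref{thm:Koebe} enters and how the limit $C\to 1$ is derived. The paper uses Koebe part \textbf{(c)} (the $1/4$-theorem) to obtain $|\Phi'(x)|\simeq\dist\bigl(\Phi(x),\partial\Phi(D(x,1))\bigr)$ and then controls the right-hand side by (\ref{eq:close_to_id}) applied at $|z-w|=1$; you instead apply the growth bound, part \textbf{(a)}, at a single real point $x+r$, which is a one-parameter variant of the same computation. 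For the limiting statement $C\to 1$, the paper argues qualitatively (as $\varepsilon\to 0$ one has $(1-\delta)S\subset\Phi(S)\subset(1+\delta)S$, so the conformal map $\Phi|_{S}$ converges to the identity), whereas your choice $r\asymp\sqrt{e^{\beta}}$ makes the convergence explicit; both are valid, and yours is somewhat more quantitative. Your closing caveat is well-placed and applies equally to the paper: Theorem \ref{thm:close_to_id} as stated says $C$ and $\beta$ depend only on $K$ and $h$, with no visible $\varepsilon$-dependence, yet both proofs need $Ce^{\beta}\to 0$ as $\varepsilon\to 0$. This is an imprecision in the statement of Theorem \ref{thm:close_to_id} (the error term must in fact shrink with $\varepsilon$ for the corollary to make sense), not a gap peculiar to your argument.
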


\begin{proof}
Let $x\in\R$. Since $D(x,1)\subset S$.   From Theorem \ref{thm:Koebe}   applied to $(\Phi(z+x)-\Phi(x))/\Phi'(x)$ we have
$$
|\Phi'(x)|\simeq\dist\Big(\Phi(x),\partial\Phi(D(x,1))\Big).
$$
However taking $z=x$ and $w\in\partial D(x,1)$ in (\ref{eq:close_to_id}), we also have $\dist\left(\Phi(x),\partial\Phi(D(x,1))\right)\simeq 1$. This shows the first claim.

On the other hand, when $\varepsilon$ is small, applying again (\ref{eq:close_to_id}), we have $(1-\delta)S\subset\Phi(S)\subset (1+\delta)S$ where $\delta$ is small and tends to zero with $\varepsilon$. Hence, $\Phi$ converges to the identity in $S$.  
\end{proof}

\begin{rem}
In some later applications the $K$-quasiconformal map $\Phi$ of the previous results is the 
integrator of the quasiregular map $\sigma\circ\eta$ given by the Measurable Riemann mapping Theorem normalized so that $\Phi(0)=0$ and $\Phi^{\prime}(\infty)=1$.   
\end{rem}

We end this section by stating a  lemma which we shall use to prove Theorem A. 

\begin{thm}[{\cite[Theorem 4.1]{AbsenceWanderingDomains}}]\label{thm:Rempe}
Let $U$ be a hyperbolic Riemann surface and denote by $\dist_{U}$ the hyperbolic distance in $U$. Let $U'\subset U$ be open and let $f:U'\rightarrow U$ be a holomorphic covering map. Assume that there is an open connected set $W\subset U'$ such that $f^{n}(W)\subset U'$ for every $n\geqslant 0$. Furthermore, let $D\subset U$ be open and suppose there is a subsequence of positive integers $(n_{k})_{k\geqslant 1}$ such that $f^{n_{k}}(W)\subset D$ for every $k\geqslant 1$ and $\dist_{U}(f^{n_{k}}(W),U\setminus D)\to+\infty$ as $k\to+\infty$. Then
$$
\dist_{U}(f^{n_{k}-1}(W),U\setminus f^{-1}(D))\to+\infty\text{ as $k\to+\infty$}.
$$
\end{thm}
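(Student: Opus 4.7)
The plan is to exploit two facts: the holomorphic covering $f\colon U'\to U$ is a local isometry between the hyperbolic metrics $\rho_{U'}$ and $\rho_U$; and for a hyperbolic inclusion $D\subset U$, the density ratio $\rho_D/\rho_U$ tends to $1$ at points deep inside $D$ (in the $\rho_U$-metric). Set $W_k:=f^{n_k}(W)\subset D$ and $V_k:=f^{n_k-1}(W)\subset U'$, so that $f(V_k)=W_k$. Since $W$ is open and connected, so is $V_k$, and the inclusion $V_k\subset f^{-1}(D)$ forces $V_k$ to lie in a single connected component $C_k$ of $f^{-1}(D)$; moreover, $f|_{C_k}\colon C_k\to D$ is again a holomorphic covering.

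The first technical step is to combine the two infinitesimal covering identities $\rho_{U'}(w)=|f'(w)|\,\rho_U(f(w))$ and $\rho_{C_k}(w)=|f'(w)|\,\rho_D(f(w))$, valid for $w\in C_k$. Their quotient yields
\[
\frac{\rho_{C_k}(w)}{\rho_{U'}(w)}=\frac{\rho_D(f(w))}{\rho_U(f(w))}\qquad(w\in C_k).
\]
Applying the standard density comparison lemma to the right-hand side, the hypothesis $\dist_U(W_k,U\setminus D)\to+\infty$ forces $\rho_{C_k}/\rho_{U'}\to 1$ uniformly on $V_k$ as $k\to+\infty$. This is the precise sense in which $V_k$ is ``deep'' inside $C_k$.

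To conclude, I would argue by contrapositive. Since $U\setminus f^{-1}(D)\subset U\setminus C_k$, it suffices to prove $\dist_U(V_k,U\setminus C_k)\to+\infty$. Assume otherwise, so that for some $R>0$ and infinitely many $k$ there exist $v_k\in V_k$ and $z_k\in U\setminus C_k$ with $\dist_U(v_k,z_k)\leqslant R$. The $U$-geodesic $\gamma$ joining them first exits $C_k$ at a point $z_k^0\in\partial C_k$, and its sub-arc $\gamma_0\subset\overline{C_k}$ from $v_k$ to $z_k^0$ still has $\rho_U$-length $\leqslant R$. By the covering identity, the $\rho_{C_k}$-length of $\gamma_0$ equals the $\rho_D$-length of $f(\gamma_0)$, which is a path in $\overline{D}$ from $f(v_k)\in W_k$ to $\partial D$. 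Using $\rho_D\geqslant\rho_U$, the latter is bounded below by $\dist_U(W_k,U\setminus D)\to+\infty$. Combining with the density convergence of the previous paragraph—which provides comparability of $\rho_{C_k}$ and $\rho_{U'}$ (hence also of $\rho_U$) along the relevant portion of $\gamma_0$ up to a factor tending to $1$—produces the required lower bound on the $\rho_U$-length of $\gamma_0$, contradicting $\leqslant R$.

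The main obstacle lies precisely in this last transition: along $\gamma_0$, especially near the endpoint $z_k^0\in\partial C_k$, the ratio $\rho_{C_k}/\rho_U$ may cease to be controlled, and the geodesic could in principle leave the ``deep'' region where $\rho_D/\rho_U$ is close to $1$. The resolution is to truncate $\gamma_0$ at the first point where $f$ leaves the region $\{y\in D:\dist_U(y,U\setminus D)\geqslant R_k/2\}$, and to apply the density convergence only on the truncated sub-arc; a careful length comparison on this sub-arc, exploiting that the covering identity transports a $\rho_U$-distance of at least $R_k/2$ in the base, is what yields the contradiction.
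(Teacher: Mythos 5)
Note first that this paper does not prove the statement; Theorem~\ref{thm:Rempe} is quoted verbatim from \cite[Theorem~4.1]{AbsenceWanderingDomains}, so there is no in-paper proof to compare against, and I assess your argument on its own.

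Your framework is the right one: reduce to the component $C_k\subset f^{-1}(D)$ containing $V_k=f^{n_k-1}(W)$, combine the two covering identities to get $\rho_{C_k}/\rho_{U'}=(\rho_D/\rho_U)\circ f$ on $C_k$, feed in the density comparison for $D\subset U$, and the lower bound $\ell_{\rho_{C_k}}(\gamma_0)=\ell_{\rho_D}(f\circ\gamma_0)\geqslant\dist_U(W_k,U\setminus D)$ is correct. The gap is in the final step, exactly at the parenthetical ``(hence also of $\rho_U$).'' On $C_k$ one has $\rho_U\leqslant\rho_{U'}\leqslant\rho_{C_k}$ by monotonicity of the hyperbolic density, and the density comparison you invoke controls only the \emph{outer} quotient $\rho_{C_k}/\rho_{U'}$. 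It gives no information whatever about the \emph{inner} quotient $\rho_{U'}/\rho_U$, which is an unrelated quantity: it tends to $+\infty$ near any point of $\partial U'\cap U$ and is not constrained by the hypothesis on $W_k$. Since all three densities dominate $\rho_U$, the lower bound on $\ell_{\rho_{C_k}}(\gamma_1)$ (equivalently $\ell_{\rho_{U'}}(\gamma_1)$) runs the wrong way and produces no lower bound on $\ell_{\rho_U}(\gamma_1)$; the truncation you propose inside $D$ repairs the degeneration of $\rho_D/\rho_U$ near $\partial D$ but leaves $\rho_{U'}/\rho_U$ completely untouched, so the intended contradiction with $\ell_{\rho_U}(\gamma_0)\leqslant R$ is not reached.

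To repair this one must, in addition, bound $\rho_{U'}/\rho_U$ along the portion of the path one uses. The key observation is $U\setminus U'\subset U\setminus f^{-1}(D)$, so that $\dist_U(\cdot,U\setminus U')\geqslant\dist_U(\cdot,U\setminus f^{-1}(D))$ --- but the latter is precisely the quantity the theorem asks you to prove large, so the argument necessarily becomes a bootstrap in $L_k:=\dist_U(V_k,U\setminus f^{-1}(D))$: along the initial sub-arc of a near-minimising path up to $\rho_U$-arclength $L_k/2$ one gets $\dist_U(\cdot,U\setminus U')\geqslant L_k/2$, hence a bound $\rho_{U'}/\rho_U\leqslant h(L_k/2)$ with $h(\delta)\to 1$, and (with some care near the exit time) one deduces an inequality of the form $\dist_U(W_k,U\setminus D)\leqslant h(L_k/2)\,(L_k+O(1))$, which forces $L_k\to+\infty$. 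Without this control on $\rho_{U'}/\rho_U$ and the bootstrap that furnishes it, the proposal does not yet constitute a proof.
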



\section{Bishop's example}\label{sec:Bishop_example}

This section is an account  of \cite[Section 17]{QuasiconformalFoldings}, where the author first gives an application of Theorem \ref{thm:Bishop_main} (c.f. {\cite[Theorem 7.1]{QuasiconformalFoldings}}) to construct a family of entire functions in class $\classB$ depending on infinitely many parameters (see Theorem \ref{thm:prototype}). Then he choses suitable parameters in order to ensure that the resulting function has oscillating wandering domains, thus proving Theorem \ref{thm:Bishop_example}. 

We include some details of the proof of this result that were left to the reader in the original paper, since they are important in the proof of Theorems \ref{thm:A} and Theorem \ref{thm:B}. Hence, although there is nothing new from Bishop's paper \cite{QuasiconformalFoldings}, the explicit computations of this section make that the slight modifications in the construction done in Section \ref{sec:thmB} are feasible.


\subsection{The prototype map}

Consider the open half strip
$$
S^{+}:=\left\{x+iy\in\C\ |\ x>0\ \text{and}\ |y|<\frac{\pi}{2}\right\}.
$$
The set $S^{+}$ is mapped biholomorphically onto the right half plane $\Hr=\{z=x+iy\in\C \mid x>0\}$ by $z\mapsto\sinh(z)$, thus by $z\mapsto\lambda\sinh(z)$ as well where $\lambda\in\pi\Nstar$ is a parameter to be fixed later (and $\Nstar=\N\setminus\{0\}$). Then $\Hr$ is mapped conformally (but not biholomorphically)  onto $\C\backslash[-1,+1]$ by $z\mapsto\cosh(z)$. Following the notation in \cite{QuasiconformalFoldings}, we denote by $(\sigma\circ\tau)|_{S^{+}}$ the composition $z\mapsto\cosh(\lambda\sinh(z))$. Remark that this map extends continuously  to the boundary, sending $\partial S^{+}$ onto the imaginary axis and then onto the real segment $[-1,+1]$.

On the horizontal rays of $\partial S^{+}$, we need to pick out some points $(a_{n}\pm i\pi/2)_{n\geqslant 1}$ which are sent into $\{-1,+1\}$ by $(\sigma\circ\tau)|_{S^{+}}$, and so that $a_{n}$ is close to $n\pi$ for every $n\geqslant 1$. More precisely, the real parts $(a_{n})_{n\geqslant 1}$ are defined as follows:
$$
\forall n\geqslant 1,\quad a_{n}:=\cosh^{-1}\left(\frac{\pi}{\lambda}\left[\frac{\lambda}{\pi}\cosh(n\pi)\right]\right)
$$
where $[x]$ stands for the integer part of any real number $x$. It is straightforward to show that $ \cosh(\lambda\sinh(a_{n}\pm i\pi/2))\in\{-1,+1\}$ and $n\pi\geqslant a_{n}>n\pi-10^{-1}$ for every $n\geqslant 1$ (because $\lambda\in\pi\Nstar$).

Now, consider the following open disks:
$$
\forall n\geqslant 1,\quad D_{n}:=\{z\in\C\ |\ |z-z_{n}|<1\}\quad\text{where}\quad z_{n}:=a_{n}+i\pi.
$$
Notice that these domains do not intersect $S^{+}$, and are pairwise disjoint according to the definition of $(a_{n})_{n\geqslant 1}$. Every domain $D_{n}$ is biholomorphically mapped onto the unit disk $\D=\{z\in\C\,|\,|z|<1\}$ by $z\mapsto z-z_{n}$. Define a quasiregular map from $\D$ onto itself of the form $z\mapsto\rho_{n}(z^{d_{n}})$ where $d_{n}\in 2\Nstar$ is a parameter to be fixed later and $\rho_{n}:\overline{\D}\rightarrow\overline{\D}$ is a quasiconformal map so that
\begin{itemize}
	\item $\rho_{n}(z)=z$ for every $z\in\partial\D$;
	\item $\rho_{n}(0)=w_{n}$ where $w_{n}$ is a parameter to be fixed later in a neighborhood $\neighbor_{1/2}$ of $1/2$ which does not depend on $n$ and is fixed so that $\overline{\neighbor_{1/2}}\subset\D$;
	\item $\rho_{n}$ is conformal on $\frac{3}{4}\D$;
	\item $\rho_{n}$ is $K_{\rho}$-quasiconformal on $\D$ where the dilation $K_{\rho}>1$ does not depend on $n$.
\end{itemize}
To fix  ideas, we may define $\rho_{n}$ to be $\rho_{n}(z):=\delta z+w_{n}$ on $\overline{D(0,3/4)}=\{z\in\C\,|\,|z|\leqslant 3/4\}$, and to be the linear interpolation $\rho_{n}(z):=z(4|z|-3)+(\delta z+w_{n})(4-4|z|)$ on $\overline{A_{3/4}}=\{z\in\C\,|\,3/4\leqslant|z|\leqslant 1\}$, where $\delta=\dist(\neighbor_{1/2},\partial\D)>0$ in order that $\rho_{n}(\overline{D(0,3/4)})=\overline{D(w_{n},3\delta/4)}\subset\D$. Since the Beltrami coefficient of $\rho_{n}$ is supported on $A_{3/4}$, and the modulus of the annulus $\rho_{n}(A_{3/4})=\{z\in\C\,|\,3\delta/4<|z-w_{n}|<1\}$ is uniformly bounded from above and below, it is straightforward to show that the dilatation of $\rho_{n}$ is uniformly bounded from above by a universal constant $K_{\rho}>1$ which only depends on $\delta$.

Following the notation in \cite{QuasiconformalFoldings}, we denote by $(\sigma\circ\tau)|_{D_{n}}$ the composition $z\mapsto\rho_{n}((z-z_{n})^{d_{n}})$ for every $n\geqslant 1$. Notice that 
the dilatation of $(\sigma\circ\tau)|_{D_{n}}$ is uniformly bounded from above by $K_{\rho}$ and it is supported on
\begin{equation}\label{eq:support}
	\left\{z\in\C\ |\ \left(\frac{3}{4}\right)^{1/d_{n}}<|z-z_{n}|<1\right\}.
\end{equation}
Figure \ref{fig:domains} summarizes the whole construction above.

\begin{figure}[!ht]
	\begin{center}
	\def\svgwidth{\textwidth}
	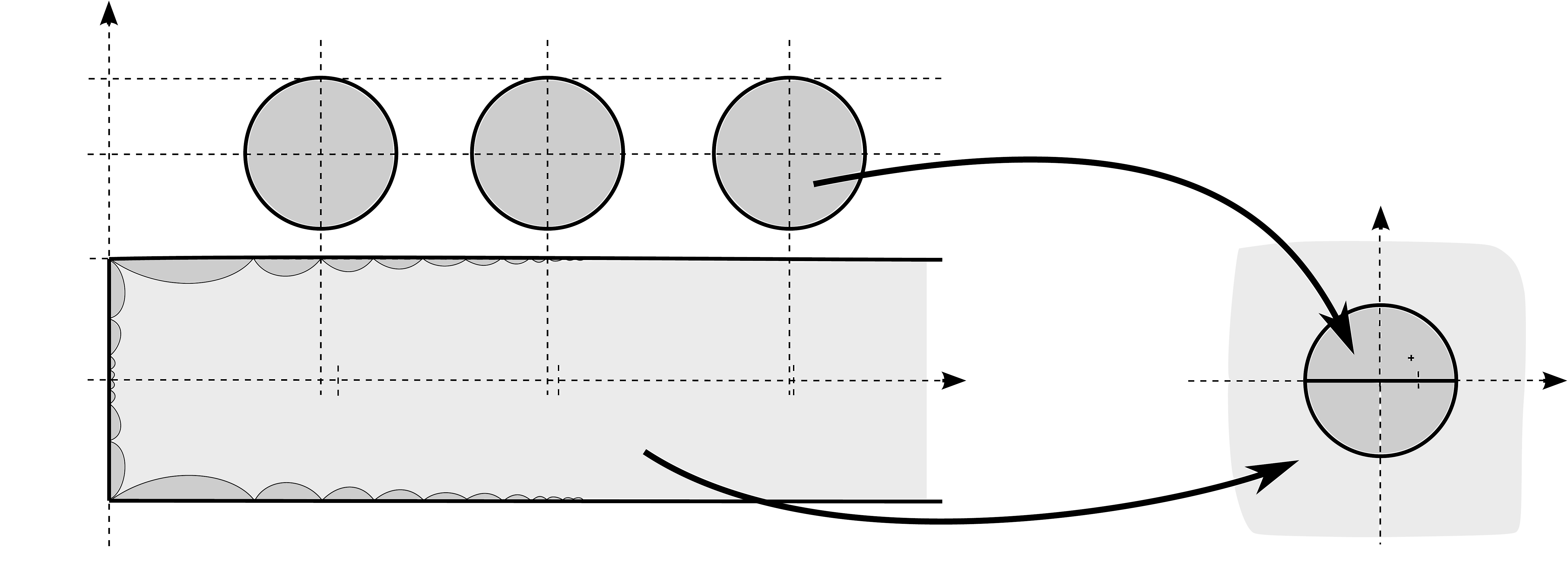
	\caption{\small The domains $S^{+}$ and $(D_{n})_{n\geqslant 1}$ are depicted on the left. The dark gray areas stand for the preimages of the unit disk $\D$ under the map $\sigma\circ\tau$.}\label{fig:domains}
	\end{center}
\end{figure}

The following result states that we can extend the maps $(\sigma\circ\tau)|_{S^{+}}$ and $(\sigma\circ\tau)|_{D_{n}}$  to a quasiregular map defined on the whole complex plane.  That provides an entire function $f$ after composing with the integrating map of the pullback of the standard complex structure. The function $f$ is actually the prototype map  which, for a certain choice of parameters $(\lambda,(d_{n})_{n\geqslant 1},(w_{n})_{n\geqslant 1})$,  has an oscillating wandering domain.

\begin{thm}\label{thm:prototype}
For every choice of the parameters $(\lambda,(d_{n})_{n\geqslant 1},(w_{n})_{n\geqslant 1})$ such that $\lambda\in\pi\Nstar$, $d_{n}\in 2\Nstar$, and $w_{n}\in\neighbor_{1/2}$ for all $n\geqslant 1$, there exists a transcendental entire function $f$ and a quasiconformal map $\phi:\C\rightarrow\C$ such that
\begin{description}
	\item[(a)] for every $z\in\C$, $f(\bar{z})=\overline{f(z)}$ and $f(-z)=f(z)$;
	\item[(b)] $f\circ\phi^{-1}$ extends the maps $(\sigma\circ\tau)|_{S^{+}}$ and $(\sigma\circ\tau)|_{D_{n}}$ for every $n\geqslant 1$, or equivalently
	$$f(z)=\left\{\begin{array}{rcl}
		\cosh(\lambda\sinh(\phi(z))) & \text{if} & \phi(z)\in S^{+} \\
		\rho_{n}\left((\phi(z)-z_{n})^{d_{n}}\right) & \text{if} & \phi(z)\in D_{n} \\
\end{array}\right.;$$
	\item[(c)] $f$ has no asymptotic values, and its set of critical values is $\{-1,+1\}\cup\overline{\{w_{n}\,/\,n\geqslant 1\}}$ (hence  $f$ is in class $\classB$);
	\item[(d)] $\phi(0)=0$, $\phi(\R)=\R$, $\phi$ is conformal in $S^+$ and its dilatation is uniformly bounded from above by an universal constant $K>1$ which does not depend on the parameters.  
\end{description}
\end{thm}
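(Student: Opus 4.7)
The strategy is to apply Bishop's folding theorem (Theorem \ref{thm:Bishop_main}) to a bipartite graph $T$ that realizes the half-strip $S^{+}$ and the disks $(D_{n})_{n\geqslant 1}$ together with their symmetric copies under $z\mapsto\bar z$ and $z\mapsto -z$, and then to read off properties (a)--(d) from the resulting construction.

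First I would build $T$ as the union of $\partial S^{+}$, the boundaries $\partial D_{n}$, and their reflections through the two coordinate axes, joined into a connected graph by short vertical segments running from each $a_{n}\pm i\pi/2\in\partial S^{+}$ to a suitable point of $\partial D_{n}$ (and symmetrically in the other quadrants). Each vertex is labeled $\pm 1$ according to the value taken by the extended map $\sigma\circ\tau$: the choice $\lambda\in\pi\Nstar$ guarantees $\cosh(\lambda\sinh(a_{n}\pm i\pi/2))\in\{-1,+1\}$, so the labels along $\partial S^{+}$ are globally coherent, while the choice $d_{n}\in 2\Nstar$ ensures that Bishop's standard decoration of $\partial D_{n}$ carries an even number of vertices, so the bipartite labeling closes up consistently around each disk. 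The $R$-components of $\C\setminus T$ are then $S^{+}$, its three reflected copies, and a single ``background'' unbounded complementary component; the $D$-components are the disks $D_{n}$ and their reflections, each surrounded by the background component, so Bishop's condition (i) holds. Uniformly bounded geometry (condition (ii)) follows from the estimate $a_{n}\in(n\pi-10^{-1},n\pi]$ together with a subdivision of the connecting vertical edges that matches the scale $1/d_{n}$ on $\partial D_{n}$; condition (iii) comes from placing the $2d_{n}$ vertices of $\partial D_{n}$ uniformly, so the Riemann map $z\mapsto z-z_{n}$ sends them to the $2d_{n}$-th roots of unity; condition (iv) is immediate on $S^{+}$, where $\tau=\lambda\sinh$ sends every bounded boundary edge to an interval of length $\geqslant\lambda\geqslant\pi$ in $\partial\Hr$, and is handled analogously on the background $R$-component.

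Applying Theorem \ref{thm:Bishop_main} now produces the entire function $f$ and the quasiconformal map $\phi$ satisfying $f\circ\phi^{-1}=\sigma\circ\tau$ off a small neighborhood $T(r_{0})$ of $T$, which gives (b) after noting that $\phi$ is conformal on $S^{+}$ and $f$ is holomorphic, so that the local formulas extend from the open dense set $\phi^{-1}(S^{+}\setminus T(r_{0}))$ to all of $\phi^{-1}(S^{+})$ (and similarly for $\phi^{-1}(D_{n})$). Property (a) follows from the symmetries of the data: the graph $T$, the conformal maps $\tau$, the post-compositions $\sigma$, and hence the Beltrami coefficient of the quasiregular model $\sigma\circ\eta$, are all invariant under $z\mapsto\bar z$ and $z\mapsto -z$; with the standard normalization of the integrating quasiconformal map, $\phi$ commutes with both reflections, and therefore so does $f$. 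Property (c) is exactly what Theorem \ref{thm:Bishop_main} yields: the vertices of $T$ produce the critical values $\pm 1$, each $D$-component contributes the critical value $w_{n}$ (together with its symmetric partners), the singular set is the closure of these values, and there are no asymptotic values; hence $f\in\classB$.

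Finally, for (d), the Beltrami coefficient of $\sigma\circ\eta$ vanishes on $S^{+}$ (since $\sigma\circ\tau=\cosh\circ\lambda\sinh$ is already holomorphic there), so the integrating map $\phi$ is conformal on $S^{+}$. The normalizations $\phi(0)=0$ and $\phi(\R)=\R$ are then obtained from the $z\mapsto\bar z$ symmetry together with the standard three-point normalization of the Beltrami equation, and the universal dilatation bound comes from combining the uniform bound $K_{\rho}$ on the dilatations of the $\rho_{n}$ with Bishop's bound $K=K(M)$ from Theorem \ref{thm:Bishop_main}. The main technical point in this plan is to ensure the combinatorial consistency of the bipartite labeling across the vertical edges joining $\partial S^{+}$ to $\partial D_{n}$ while keeping the geometric constant $M$ uniform in the parameters, which is precisely where the arithmetic conditions $\lambda\in\pi\Nstar$ and $d_{n}\in 2\Nstar$ become essential; once that is arranged, the theorem is a direct application of Theorem \ref{thm:Bishop_main} combined with the standard symmetrization argument.
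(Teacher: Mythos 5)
Your overall strategy is the same as the paper's---apply Bishop's folding theorem (Theorem \ref{thm:Bishop_main}) to a bipartite graph $T$ built from $\partial S^{+}$, the circles $\partial D_{n}$, their reflections, and connecting segments---but the graph you describe is incomplete in a way that breaks a key hypothesis, and the missing piece is the main technical point of the proof. In your version of $T$, each $\partial D_{n}$ is attached to $\partial S^{+}$ by a short vertical segment and then everything is symmetrized; as a result the complement $\C\setminus T$ has, besides $S^{\pm}$ and the disks, a \emph{single} unbounded background $R$-component $\Omega$ whose boundary contains all the $\partial D_{n}$ and their reflections. (Incidentally $S^{+}$ is already symmetric about $\R$, so there are two half-strips $S^{\pm}$, not four.) Condition (iv) of Theorem \ref{thm:Bishop_main} requires that on every $R$-component the $\tau$-sizes of all boundary edges be at least $\pi$. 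For $\Omega$ this cannot be arranged: $\partial\Omega$ carries, for each $n$, roughly $2d_{n}$ edges of diameter $\pi/d_{n}$, and since the disks $D_{n}$ all have radius $1$ and are attached in the same way, the Riemann map $\tau|_{\Omega}\colon\Omega\to\Hr$ sends $\partial D_{n}$ to an interval of $\partial\Hr$ whose length is controlled by the conformal geometry of $\Omega$ and not by $d_{n}$; dividing that interval among $2d_{n}$ edges gives individual $\tau$-sizes that tend to $0$ as $d_{n}\to\infty$ (and $d_{n}$ must be allowed to grow without bound for the later parameter adjustments). A single post-composed scaling cannot rescue this, because the ratios between the $\tau$-sizes at different $n$ are conformal invariants of $\Omega$. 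You dismiss this with ``is handled analogously on the background $R$-component,'' but it is exactly the step where the analogy with $S^{+}$ fails.

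The paper avoids this by enlarging $T$: it adds the vertical ray from each $z_{n}+i$ straight up to infinity and the ray on the imaginary axis above $i\pi/2$ (and their reflections), plus labelled vertices along these rays. These rays cut the would-be background component into infinitely many near-half-strips $S_{0},S_{1},S_{2},\dots$, one between each consecutive pair $D_{n},D_{n+1}$. For each fixed $n$, the edges of $\partial S_{n}$ have diameters uniformly bounded away from $0$, the Riemann map $\phi_{n}\colon S_{n}\to S^{+}$ tends to an affine map at infinity (this uses estimates for conformal maps of perturbed half-strips, the reference \cite{ConformalMappingInfiniteStrips} in the paper), and one can then choose a \emph{separate} normalization $\tau|_{S_{n}}=\lambda_{n}\sinh\circ\phi_{n}$ with $\lambda_{n}$ large enough to make every $\tau$-size on $\partial S_{n}$ at least $\pi$. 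The freedom to pick a different $\lambda_{n}$ for each $n$ is precisely what the extra rays buy, and it is what your construction lacks. Apart from this, your treatment of the bipartite labeling on $\partial S^{+}$ and $\partial D_{n}$ (using $\lambda\in\pi\Nstar$ and $d_{n}\in2\Nstar$), the subdivision of the connecting segments, the symmetry argument for (a), and the deduction of (b)--(d) from Bishop's theorem are all in line with the paper, though the paper makes the geometric subdivision of the connecting segments and the bounded-geometry estimates explicit rather than asserting them.
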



\subsection{Proof of Theorem \ref{thm:prototype}}

In this subsection, we give the proof of Theorem \ref{thm:prototype}. The main idea is   to apply Theorem \ref{thm:Bishop_main}, that is to define a graph $T$ and the corresponding maps $\tau_{j}$ in each connected component of the complement of $T$ and show that they are under the hypotheses of the theorem. This discussion is outlined in \cite[Section 17]{QuasiconformalFoldings}. 

 
The graph $T$, yet to be defined,  will be symmetric with respect to both the real and imaginary axis. The boundaries of the sets $S^{+}$ and $(D_{n})_{n\geqslant 1}$ introduced in the previous section will be part of $T$, hence the sets themselves will be connected components of $\C\setminus T$.

We connect every $\partial D_{n}\subset T$ to $\partial S^{+}\subset T$  by the vertical segment joining $z_{n}-i=a_{n}+i(\pi-1)\in\partial D_{n}$ to $a_{n}+i\pi/2\in\partial S^{+}$. We also add to $T$ the vertical rays starting at every vertex $z_{n}+i\in\partial D_{n}$ and going straight up to infinity, together with the imaginary axis. That produces vertical unbounded connected components of $\C\backslash T$ (that are almost half strips except for the indentations caused by the disks $(D_{n})_{n\geqslant 1}$). For every $n\geqslant 1$, we denote by $S_{n}$ every such domain located between $D_{n}$ and $D_{n+1}$, and by $S_{0}$ the domain located between the imaginary axis and $D_{1}$. Finally, we complete the  remainder of the graph $T$ by symmetry with respect to both the real and imaginary axes. The graph $T$ is shown in Figure \ref{fig:graph}. Clearly condition \textbf{(i)} of Theorem \ref{thm:Bishop_main} follows.

\begin{figure}[!ht]
	\begin{center}
	\def\svgwidth{0.75\textwidth}
	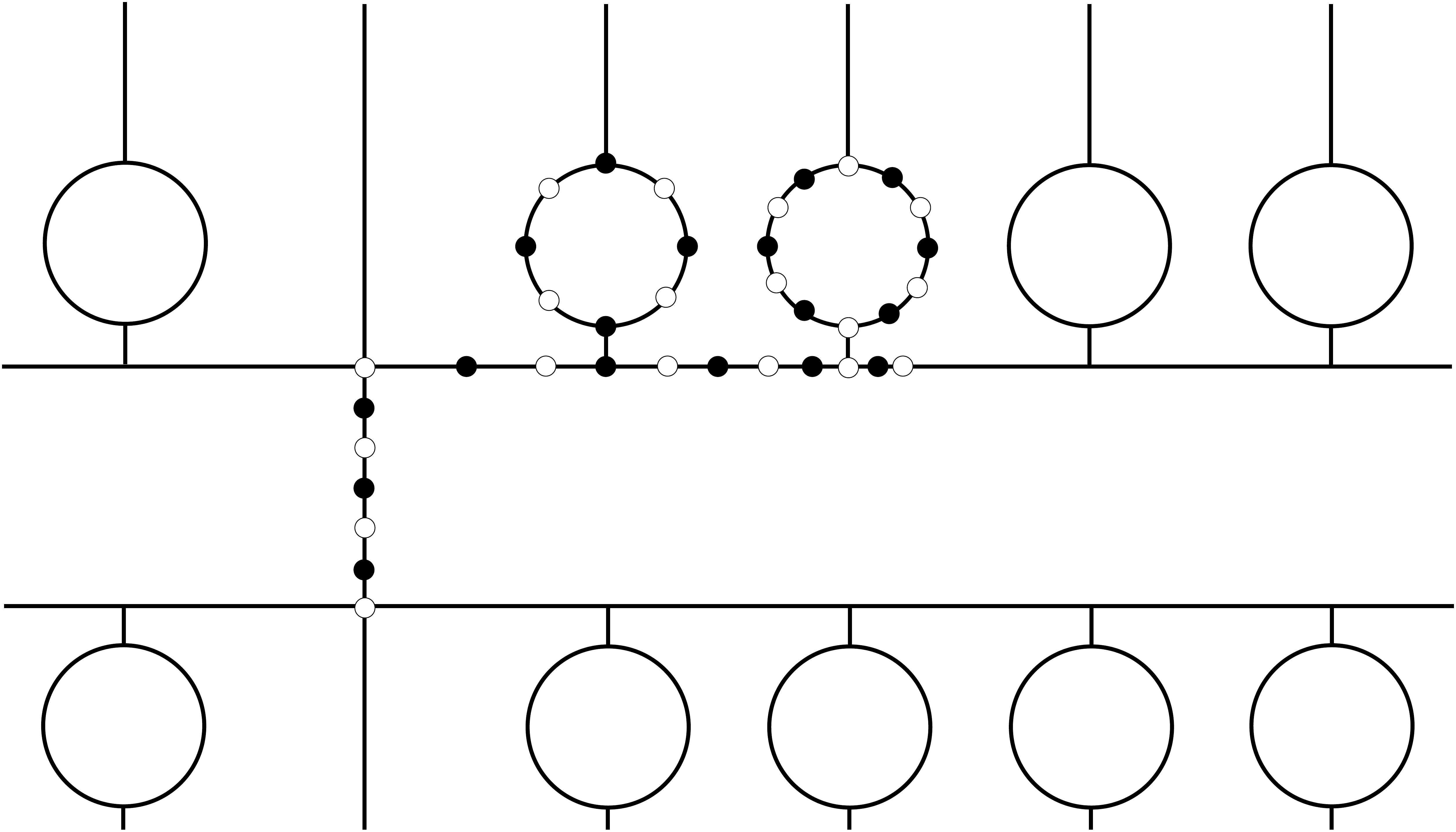
	\caption{The graph $T$ with some first labelled vertices.}\label{fig:graph}
	\end{center}
\end{figure}

Next step is to show that $T$ has uniformly bounded geometry (see Definition \ref{def:unif_bounded_geom}). Items \textbf{(i)} and $\textbf{(ii)}$ in Definition \ref{def:unif_bounded_geom} hold since every edge is either a vertical segment, a horizontal segment, or a circular arc of radius $1$.

On $\partial S^{+}\subset T$, we define the vertices of $T$ to be the points which are sent into $\{-1,+1\}$ by the composition $(\sigma\circ\tau)|_{S^{+}}:z\mapsto\cosh(\lambda\sinh(z))$. These vertices are of the form:
$$
\left\{i\sin^{-1}\left(\frac{\pi}{\lambda}k\right)\ /\ k\in\Z\ \text{and}\ -\frac{\lambda}{\pi}\leqslant k\leqslant\frac{\lambda}{\pi}\right\}\cup\left\{\cosh^{-1}\left(\frac{\pi}{\lambda}k\right)\pm i\frac{\pi}{2}\ /\ k\in\Z\ \text{and}\ k\geqslant\frac{\lambda}{\pi}\right\}.
$$
In particular, notice that the branching points $\pm i\pi/2$ are vertices of $T$ since $\lambda\in\pi\Nstar$, and the branching points $(a_{n}\pm i\pi/2)_{n\geqslant 1}$ are vertices of $T$ as well. By using classical properties of the maps $\sin^{-1}$ and $\cosh^{-1}$, it is straightforward to show the following estimates 
$$\begin{array}{rccccc}
	\forall\quad 2\leqslant k\leqslant\dfrac{\lambda}{\pi}, & 1 & \leqslant & \dfrac{\sin^{-1}\left(\frac{\pi}{\lambda}k\right)-\sin^{-1}\left(\frac{\pi}{\lambda}(k-1)\right)}{\sin^{-1}\left(\frac{\pi}{\lambda}(k-1)\right)-\sin^{-1}\left(\frac{\pi}{\lambda}(k-2)\right)} & \leqslant & \dfrac{\sqrt{2}}{2-\sqrt{2}} \\
	\forall\quad k\geqslant\dfrac{\lambda}{\pi}, & 1 & \geqslant & \dfrac{\cosh^{-1}\left(\frac{\pi}{\lambda}(k+2)\right)-\cosh^{-1}\left(\frac{\pi}{\lambda}(k+1)\right)}{\cosh^{-1}\left(\frac{\pi}{\lambda}(k+1)\right)-\cosh^{-1}\left(\frac{\pi}{\lambda}k\right)} & \geqslant & \dfrac{\cosh^{-1}(3)}{\cosh^{-1}(2)}-1 \\
	& 1 & \leqslant & \dfrac{\sin^{-1}\left(\frac{\pi}{\lambda}\frac{\lambda}{\pi}\right)-\sin^{-1}\left(\frac{\pi}{\lambda}(\frac{\lambda}{\pi}-1)\right)}{\cosh^{-1}\left(\frac{\pi}{\lambda}(\frac{\lambda}{\pi}+1)\right)-\cosh^{-1}\left(\frac{\pi}{\lambda}\frac{\lambda}{\pi}\right)} & \leqslant & \dfrac{\pi/2}{\cosh^{-1}(2)} .
\end{array}$$

From these estimates we obtain items \textbf{(iii)} and \textbf{(iv)} in Definition \ref{def:unif_bounded_geom} for every pair of edges in $\partial S^{+}$.

Similarly, on every $\partial D_{n}\subset T$, we define the vertices of $T$ to be the points which are sent into $\{-1,+1\}$ by the composition $(\sigma\circ\tau)|_{D_{n}}:z\mapsto\rho_{n}((z-z_{n})^{d_{n}})$. Taking into account that $\rho_{n}$ is the identity map on $\partial\D$, these vertices are precisely the translated $2d_{n}$-th roots of unity. Notice that the branching points $z_{n}\pm i$ are vertices of $T$ since $d_{n}$ is even. Moreover,  items \textbf{(iii)} and \textbf{(iv)} in Definition \ref{def:unif_bounded_geom} hold for every pair of edges in $\partial D_{n}$ because they have equal diameter less than $2\pi/2d_{n}\leqslant\pi$.

We also label every vertex defined above by either $-1$ or $+1$ according to its image by the composition $\sigma\circ\tau$.

Now, consider the vertical segment in $T$ which connects $\partial D_{n}$ to $\partial S^{+}$. It has constant length equal to $\pi/2-1$ while the edges connected at both ends, which are $z_{n}-i=a_{n}+i(\pi-1)$ and $a_{n}+i\pi/2$, have lengths comparable to $\ell_{n}=2\pi/2d_{n}$ and $L_{n}=\cosh^{-1}(\cosh(n\pi)+\pi/\lambda)-n\pi$ respectively (because $a_{n}$ is close to $n\pi$). If $\ell_{n}$ and $L_{n}$ are comparable, it is enough to subdivide the segment into finitely many edges of comparable lengths, taking care that two adjacent additional vertices have different label in $\{-1,+1\}$ (adding one extra edge if necessary). So we may assume, without loss of generality, that $\ell_{n}=o(L_{n})$. The idea is to introduce additional labelled vertices in order that the lengths of the first edges from $\partial D_{n}$, namely from $a_{n}+i(\pi-1)$, are in a geometric progression of uniformly constant ratio, say $2$. More precisely, we define these additional vertices to be as follows
$$\begin{array}{l}
	a_{n}+i(\pi-1-\ell_{n}) \\
	a_{n}+i(\pi-1-\ell_{n}-2\ell_{n}) \\
	a_{n}+i(\pi-1-\ell_{n}-2\ell_{n}-4\ell_{n}) \\
	\dots \\
	a_{n}+i(\pi-1-\ell_{n}-2\ell_{n}-4\ell_{n}-\dots-2^{k}\ell_{n}) \\
	\dots \\
\end{array}$$
and we end this geometric progression after finitely many edges as soon as the length of the last edge, which is of the form $2^{k}\ell_{n}$, is comparable to $L_{n}$. The assumption $\ell_{n}=o(L_{n})$ ensures that the additional vertices so defined are not outside the vertical segment of constant length joining $D_{n}$ and $S^{+}$. Then, we subdivide the residual segment into finitely many edges of lengths comparable with $L_{n}$, taking care that two adjacent additional vertices have different label in $\{-1,+1\}$ (adding one extra edge if necessary). By construction, these edges satisfy items $\textbf{(iii)}$ and \textbf{(iv)} in Definition \ref{def:unif_bounded_geom}.

On the vertical rays starting at every vertex $z_{n}+i$ and going straight up to infinity, it is enough to add infinitely many additional labelled vertices, spaced with constant length equal to $2\pi/2d_{n}$, ensuring that two adjacent additional vertices have different label in $\{-1,+1\}$. For the vertical ray in the imaginary axis, starting at $i\pi/2$ and going straight up to infinity, we proceed as well with labelled vertices separated by constant distance equal to $\pi/2-\sin^{-1}(1-\pi/\lambda)$. Finally, we complete the set of vertices of $T$ by symmetry with respect to both the real and imaginary axes. Therefore, the unbounded connected graph $T$ is bipartite and has uniformly bounded geometry. So, condition \textbf{(ii)} in Theorem \ref{thm:Bishop_main} is also satisfied.

The third condition in Theorem \ref{thm:Bishop_main} is satisfied by construction. It remains to check  condition \textbf{(iv)}, which  requires that every connected component $\Omega$ of $\C\setminus T$ is  mapped biholomorphically onto either $\Hr$ or $\D$, by a map $\tau|_{\Omega}$ such that $\tau|_{\Omega}^{-1}$  extends continuously to the boundary, and each connected component of the preimage under $\tau|_{\Omega}^{-1}$ of every edge in $T$ (called a $\tau$-image) has length $\geqslant\pi$.

Recall that $\tau|_{S^{+}}:z\mapsto\lambda\sinh(z)$  maps $S^{+}$ onto $\Hr$ biholomorphically,  extends continuously to the boundary, and sends every edge in $\partial S^{+}\subset T$ onto a vertical segment of length $\pi$ in $\partial\Hr$. Similarly, every $\tau|_{D_{n}}:z\mapsto z-z_{n}$  maps $D_{n}$ onto $\D$ biholomorphically,  extends continuously to the boundary, and sends every edge in $\partial D_{n}\subset T$ onto a half circle of length $\pi$ in $\partial\D$.

Now fix $n\geqslant 0$ and  consider the  almost half strip $S_{n}$. Notice that $\diam(e)$ is uniformly bounded away from $0$ for every edge $e$ in $\partial S_{n}\subset T$ provided that $n$ is fixed. The domain $S_{n}$ is mapped onto $S^{+}$ by a biholomorphism, say $\phi_{n}:S_{n}\rightarrow S^{+}$, which  extends continuously to the boundary. Since $S_{n}$ differs from a straight half strip only in a compact set of the complex plane, we can show that $z\mapsto\phi_{n}(z)$ tends uniformly to an affine transformation, as the imaginary part of $z$ tends  to $+\infty$ (for instance by using the estimations in \cite{ConformalMappingInfiniteStrips}). In particular $\diam(\phi_{n}(e))$ is uniformly bounded away from $0$ for every edge $e$ in $\partial S_{n}\subset T$. By using  properties of the map $\sinh$, the same holds as well for $\diam(\sinh(\phi_{n}(e)))$ where the composition $z\mapsto\sinh(\phi_{n}(z))$ maps $S_{n}$ onto $\Hr$ biholomorphically. Therefore, there exists a positive real number $\lambda_{n}$ large enough so that the map $\tau|_{S_{n}}:z\mapsto\lambda_{n}\sinh(\phi_{n}(z))$ satisfies the assumption.

At this point we may apply Theorem \ref{thm:Bishop_main} to conclude statements \textbf{(a)-(d)} in Theorem \ref{thm:prototype}.  Notice that no quasiconformal foldings occur in $S^{+}$ and $(D_{n})_{n\geqslant 1}$ (because the length of every $\tau$-image is exactly $\pi$), and the dilatation of $\phi$ is uniformly bounded  above by a universal constant $K\geqslant 1$, which only depends on the upper bound $K_{\rho}\geqslant 1$ for the dilation of every $\rho_{n}$, and on the universal constant $M\geqslant 1$ which appears in the geometrical properties of $T$.


\subsection{Proof of Theorem \ref{thm:Bishop_example}}

Following \cite[Section 17]{QuasiconformalFoldings}, we explain in this subsection how to choose the parameters in Theorem \ref{thm:prototype} in order to obtain an entire function in class $\classB$ with an oscillating wandering domain, thus proving Theorem \ref{thm:Bishop_example}. The results here are  very relevant for the proof of Theorem \ref{thm:A} and especially of Theorem \ref{thm:B}.

Let $(\lambda,(d_{n})_{n\geqslant 1},(w_{n})_{n\geqslant 1})$ be some parameters satisfying \textbf{(a)-(d)} in Theorem \ref{thm:prototype}. Remark that the orbit of $\frac{1}{2}$ under iteration of $f$ stays in the positive real line by \textbf{(b)} and \textbf{(d)}. The following result states that, under a certain condition,  this orbit actually escapes to infinity. That allows us to construct infinitely many tiny disks very close to $1/2$ which follow its orbit for arbitrary many iterates before they enter in some of the disks $(D_{n})_{n\geqslant 1}$.

\begin{lem}\label{lem:construction_Un}
Let $f$ and $\phi$ as in  Theorem \ref{thm:prototype} for any choice of parameters $(\lambda,(d_{n})_{n\geqslant 1},$ $(w_{n})_{n\geqslant 1})$. If the following condition holds:
\begin{equation}\label{eq:parameters_assumption}
	\forall x\geqslant 0,\quad\frac{d\phi}{dx}(x)\geqslant\frac{10}{\lambda},
\end{equation}
then the orbit of $1/2$ under iteration of $f$ escapes to infinity, and there exists a sequence of euclidean disks $(U_{n})_{n\geqslant 1}$, together with a subsequence of positive integers $(p_{n})_{n\geqslant 1}$ such that for every $n\geqslant 1$:
\begin{description}
	\item[(a)] $U_{n}$ has radius $0.009(\frac{d}{dx}f^{n}(1/2))^{-1}$ with $(\frac{d}{dx}f^{n}(1/2))^{-1}\leqslant 50^{-n}/n!$;
	\item[(b)] $U_{n}$ is contained in the disk centered at $1/2$ and of radius $20(\frac{d}{dx}f^{n}(1/2))^{-1}$;
	\item[(c)] $f^{k}(U_{n})\subset S^{+}$ for every $0\leqslant k\leqslant n-1$, and
	\item[(d)] $f^{n}(U_{n})\subset\frac{1}{4}\widetilde{D_{n}}$ where $\frac{1}{4}\widetilde{D_{n}}:=\{z\in\C\,|\,|z-z_{p_{n}}|\leqslant 1/4\}$.
\end{description}
\end{lem}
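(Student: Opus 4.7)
The plan is to exploit condition~\eqref{eq:parameters_assumption} to force the orbit of $1/2$ to escape super-exponentially along $\R_+$, convert this into fast growth of the multiplier $(f^n)'(1/2)$ via the chain rule, and finally trade derivative growth for the disks $U_n$ using Koebe's theorems.

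First I would establish the escape. By parts~\textbf{(b)} and~\textbf{(d)} of Theorem~\ref{thm:prototype}, on $\R_+$ we have $f(x)=\cosh(\lambda\sinh(\phi(x)))$ with $\phi(x)\in\R_+$. Integrating $\phi'\geq 10/\lambda$ from $\phi(0)=0$ gives $\phi(x)\geq 10x/\lambda$, whence $\lambda\sinh(\phi(x))\geq 10x$ (since $\sinh y\geq y$ for $y\geq 0$) and $f(x)\geq \tfrac12 e^{10x}$. Setting $x_k=f^k(1/2)$ gives $x_k\to+\infty$ (in fact tetrationally fast). Differentiating, $f'(x)\geq 10\sinh(10x)$ on $\R_+$, so $f'(x_0)\geq 10\sinh(5)$ and $f'(x_k)$ is enormous for $k\geq 1$. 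The chain rule then produces $(f^n)'(1/2)=\prod_{k<n}f'(x_k)\geq n!\,50^n$, proving the second half of~\textbf{(a)}.

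Next I would pick $p_n$ to be the positive integer with $|x_n-p_n\pi|\leq \pi/2$; since $|a_{p_n}-p_n\pi|<10^{-1}$, the center $z_{p_n}=a_{p_n}+i\pi$ of $\tfrac14\widetilde{D_{p_n}}$ satisfies $|x_n-z_{p_n}|<4$. Set $R_n:=20/(f^n)'(1/2)$. The crucial step is to show that $f^n$ is univalent on $B(1/2,R_n)$. Writing $f|_{S^+}=\cosh\circ(\lambda\sinh)\circ\phi$ with $\phi$ conformal on $S^+$, $\lambda\sinh$ biholomorphic $S^+\to\Hr$, and $\cosh$ having no critical point on $\Hr$, we see that $f$ is locally univalent at every point of $S^+$. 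I would prove by induction on $0\leq k\leq n-1$ that $f^k(B(1/2,R_n))\subset S^+$ is small enough for $f$ to be univalent on it, using Theorem~\ref{thm:Koebe}(a) to bound its diameter by a fixed multiple of $(f^k)'(1/2)R_n=20/\prod_{j\geq k}f'(x_j)$, which is microscopic by the previous step. Once univalence is secured, the Koebe $\tfrac14$-theorem yields $f^n(B(1/2,R_n))\supset B(x_n,5)\ni z_{p_n}$, so there is a unique $z^*\in B(1/2,R_n)$ with $f^n(z^*)=z_{p_n}$.

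Finally, to construct $U_n$ I would apply Theorem~\ref{thm:Koebe} to the normalized map $H(w):=(f^n(z^*+rw)-z_{p_n})/(r\,f^{n\,\prime}(z^*))$ on $\D$, with $r:=16/(f^n)'(1/2)$ so that $B(z^*,r)\subset B(1/2,R_n)$. Theorem~\ref{thm:Koebe}(b), applied at the point $(z^*-1/2)/R_n\in\overline{B(0,1/5)}$, pins the ratio $f^{n\,\prime}(z^*)/(f^n)'(1/2)$ into a fixed bounded range, so that solving $|H(w)|\,r\,|f^{n\,\prime}(z^*)|\leq 1/4$ via Theorem~\ref{thm:Koebe}(a) produces a universal $\delta>0$ (with $\delta=0.009$ after careful bookkeeping) such that $U_n:=B(z^*,\delta/(f^n)'(1/2))$ satisfies $f^n(U_n)\subset\tfrac14\widetilde{D_{p_n}}$. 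The inclusion $U_n\subset B(1/2,20/(f^n)'(1/2))$ of~\textbf{(b)} is automatic since $|z^*-1/2|\leq R_n$. The main obstacle will be the univalence step: propagating smallness and univalence of the intermediate iterates $f^k(B(1/2,R_n))$ uniformly in $k$ while remaining inside $S^+$, which works precisely because the super-exponential growth of $f'$ along the orbit of $1/2$ makes each intermediate distortion negligible.
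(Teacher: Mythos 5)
Your proposal takes a genuinely different route from the paper: you push the ball $B(1/2,R_n)$ \emph{forward} under $f^n$ and try to establish univalence of $f^n$ on it by induction, whereas the paper \emph{pulls back} the target disk $\tfrac14\widetilde{D_{n}}$ through a chain of inverse branches. The paper's pullback is much cleaner precisely because it never needs to prove univalence: from the escape estimate $x_{k+1}-x_k\geqslant 11$, the disk $D(x_k,10)$ is disjoint from $\overline{\D}$, hence (by part \textbf{(c)} of Theorem~\ref{thm:prototype}) contains no singular value of $f$, so every inverse branch of $f$ on $D(x_k,10)$ is automatically defined and univalent. Koebe's theorem then controls each pullback step without further ado, and one reads off the radii $20/(f^n)'(1/2)$ and $0.009/(f^n)'(1/2)$ from Theorem~\ref{thm:Koebe}\textbf{(a)}, \textbf{(b)}, \textbf{(c)} applied at the radius-ratio $5/10$.

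In contrast, your univalence step has a genuine gap. You deduce univalence of $f$ on the tiny intermediate domains $f^k(B(1/2,R_n))$ from the observation that $\cosh$ has no critical point on $\Hr$, hence $f$ is \emph{locally} univalent on $S^+$, plus smallness in $S^+$. This is not enough: $\cosh:\Hr\to\C\setminus[-1,1]$ is an infinite-degree covering (periodic under $z\mapsto z+2\pi i$), so local univalence plus a small diameter inside $S^+$ does not yield injectivity. What you actually need is that the image of $f^k(B(1/2,R_n))$ under $\lambda\sinh\circ\phi$ has diameter $<2\pi$ inside $\Hr$, and since $\lambda\sinh\circ\phi$ has derivative $\lambda\cosh(\phi(x_k))\phi'(x_k)$, which is enormous along $\R_+$, a set that is small in $S^+$ can have a very large image in $\Hr$. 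It happens to work out (the outer factor $\sinh(\lambda\sinh(\phi(x_k)))$ of $f'(x_k)$ is even bigger, so the $\Hr$-image is in fact tiny), but this requires a separate computation that you do not supply, and the constants are marginal for small $n$. There is also a secondary circularity: Koebe's growth theorem gives no finite bound on $f^k(B(1/2,R_n))$ from univalence on $B(1/2,R_n)$ alone (the bound in Theorem~\ref{thm:Koebe}\textbf{(a)} blows up as $|z|\to 1$), so the induction must be run on a strictly larger disk than the one whose image you want to bound. Finally, your last step places $B(z^*,16/(f^n)'(1/2))$ inside $B(1/2,R_n)$ with $R_n=20/(f^n)'(1/2)$, which fails unless one first shows $|z^*-1/2|\leqslant 4/(f^n)'(1/2)$; a Koebe lower-bound estimate on $|z^*-1/2|$ is needed and is not automatic. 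All of these are fixable, but the inverse-branch argument of the paper avoids them entirely and is the intended proof.
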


\begin{proof}
Denote by $(x_{k}=f^{k}(1/2))_{k\geqslant 0}$ the orbit of $1/2$ under iteration of $f$. We claim that if  condition (\ref{eq:parameters_assumption}) holds then 
\begin{equation}\label{eq:escaping}
	\forall k\geqslant 0,\quad x_{k+1}-x_{k}\geqslant 11\quad\text{and}\quad\frac{d}{dx}f(x_{k})\geqslant 50.
\end{equation}
Indeed, from  statement \textbf{(b)} in Theorem \ref{thm:prototype}, we get for every $x\geqslant 0$
\begin{eqnarray*}
	\frac{d}{dx}f(x) & = & \frac{d}{dx}\cosh(\lambda\sinh(\phi(x))) \\
	& = & \sinh(\lambda\sinh(\phi(x)))\lambda\cosh(\phi(x))\frac{d}{dx}\phi(x) \\
	& \geqslant & \lambda\phi(x)\lambda\frac{d}{dx}\phi(x)
\end{eqnarray*}
by using the facts that $\sinh(t)\geqslant t$ and $\cosh(t)\geqslant 1$ for every $t\geqslant 0$. Furthermore, integrating  condition (\ref{eq:parameters_assumption}) gives $\phi(x)\geqslant\frac{10}{\lambda}x$ (since $\phi(0)=0$ from statement \textbf{(d)} in Theorem \ref{thm:prototype}). Therefore
$$
\frac{d}{dx}f(x)\geqslant 100 \,x\quad\text{and}\quad f(x)\geqslant 50 \,x^{2}-f(0)=50\,x^{2}-1.
$$
In particular, the orbit $(x_{k})_{k\geqslant 0}$ of $1/2$ escapes to infinity, $\frac{d}{dx}f^{n}(1/2)\geqslant 50^{n}n!$ for every $n\geqslant 1$ by using the chain rule, and it is straightforward to prove  (\ref{eq:escaping}).

Define every $p_{n}$ so that $|x_{n}-a_{p_{n}}|$ is minimal. According to the definition of $(a_{n})_{n\geqslant 1}$, $(p_{n})_{n\geqslant 1}$ is  a strictly increasing sequence of positive integers.

Now fix $n\geqslant 1$, and remark that $\frac{1}{4}\widetilde{D_{n}}:=D(a_{p_{n}}+i\pi,1/4)\subset D(x_{n},5)$ since $|x_{n}-a_{p_{n}}|\leqslant(\pi+10^{-1})/2$. It follows from the first estimate in (\ref{eq:escaping}) that $D(x_{n},10)$ does not intersect $\overline{\D}$, and hence contains no critical values of $f$ according to \textbf{(c)} in Theorem \ref{thm:prototype}. Therefore $f$ has a univalent inverse branch on $D(x_{n},10)$ that maps $D(x_{n},5)$ onto a neighborhood of $x_{n-1}$. By using \textbf{(a)} from Theorem \ref{thm:Koebe}, this neighborhood is contained in the disk centered at $x_{n-1}$  of radius
$$
10\frac{5/10}{(1-5/10)^{2}}\left(\frac{d}{dx}f^{-1}(x_{n})\right)=20\left(\frac{d}{dx}f(x_{n-1})\right)^{-1}
$$
which is less than $\pi/2$ according to the second estimate in (\ref{eq:escaping}). In particular, $\frac{1}{4}\widetilde{D_{n}}$ has a preimage under $f$ in $S^{+}\cap D(x_{n-1},5)$.

Repeating this process $n$ times gives a preimage of $\frac{1}{4}\widetilde{D_{n}}$ under $f^{n}$ close to $x_{0}=1/2$. By using \textbf{(a)} from Theorem \ref{thm:Koebe}, this preimage is contained in a disk centered at $1/2$  of radius
$$
10\frac{5/10}{(1-5/10)^{2}}\left(\frac{d}{dx}(f^{n})^{-1}(x_{n})\right)=20\left(\frac{d}{dx}f^{n}\left(\frac{1}{2}\right)\right)^{-1}.
$$
Moreover, using \textbf{(c)} and then \textbf{(b)} from Theorem \ref{thm:Koebe}, this preimage contains a disk of radius
$$
\frac{1}{4}\cdot\frac{1}{4}\left(\frac{d}{dz}(f^{n})^{-1}(z_{p_{n}})\right)\geqslant\frac{1}{16}\cdot\frac{1-5/10}{(1+5/10)^{3}}\left(\frac{d}{dx}(f^{n})^{-1}(x_{n})\right)\geqslant 0.009\left(\frac{d}{dx}f^{n}\left(\frac{1}{2}\right)\right)^{-1}.
$$
\end{proof}

\begin{lem}\label{lem:lambda}
There exist a positive real number $\lambda^{0}\in\pi\Nstar$ and an exponentially increasing sequence $(d_{n}^{0})_{n\geqslant 1}$ in $2\Nstar$ such that for every choice of parameters $(\lambda,(d_{n})_{n\geqslant 1},$ $(w_{n})_{n\geqslant 1})$ with $\lambda\geqslant\lambda^{0}$ and $d_{n}\geqslant d_{n}^{0}$ for every $n\geqslant 1$, condition (\ref{eq:parameters_assumption}) in Lemma \ref{lem:construction_Un} holds, and hence the euclidean disks $(U_{n})_{n\geqslant 1}$ exist.  
\end{lem}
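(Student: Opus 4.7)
The plan is to invoke Corollary~\ref{thm:bounds_derivative} to obtain a lower bound $\phi'(x)\geq 1/C$ on the real line with $C$ independent of the parameters once they are chosen large enough, and then to pick $\lambda^0\in\pi\Nstar$ with $\lambda^0\geq 10C$; any $\lambda\geq\lambda^0$ will then satisfy $d\phi/dx\geq 1/C\geq 10/\lambda$ on $\R_+$. By Theorem~\ref{thm:prototype}(a),(d), the map $\phi$ is $K$-quasiconformal with $K$ independent of the parameters, fixes $0$, satisfies $\phi(\R)=\R$, and is conformal on $S^+$; by the symmetries of $f$ it is also conformal on a horizontal strip about $\R$, so the hypotheses of Corollary~\ref{thm:bounds_derivative} hold up to a harmless real rescaling setting $\phi(1)=1$.

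To verify the $(\varepsilon,h)$-thinness of the dilatation support $E$ of $\phi$ (equivalently, of $\sigma\circ\eta$), I take $h(r):=e^{-ar}$ for some small $a\in(0,1)$, which satisfies the integrability requirement of the definition. Since no folding occurs in $S^+$ or any $D_n$, the set $E$ splits into two kinds of pieces. The first is the union of the annuli (\ref{eq:support}) inside each $D_n$, each of area $\pi(1-(3/4)^{2/d_n})=O(1/d_n)$; choosing $(d_n^0)$ to grow exponentially, say $d_n^0:=2^{\lceil bn\rceil}$ with $b$ sufficiently large relative to $a$, makes this part $(\varepsilon_{\mathrm{ann}},h)$-thin with $\varepsilon_{\mathrm{ann}}$ as small as we wish, since each annulus sits near $z_n$ with $|z_n|\sim n\pi$ so $h(|z|)\gtrsim e^{-a\pi n}$ on $A_n$. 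The second piece lies in the folding neighborhood $T(r_0)$ around the remaining parts of $T$: the vertical segments connecting $D_n$ to $S^+$, the vertical rays escaping to infinity above each $D_n$, and their symmetric copies. By the uniformly bounded geometry of $T$ together with the asymptotics of $\cosh^{-1}$ that determine the vertices on $\partial S^+$, edges of $T$ at real part $R$ have length of order $e^{-R}$, so $\area(T(r_0)\cap D(z,1))$ decays faster than $h(|z|)=e^{-a|z|}$ for $a<1$. Hence $T(r_0)$ is $(\varepsilon_{\mathrm{fold}},h)$-thin with $\varepsilon_{\mathrm{fold}}$ small (proportional to $r_0$). Adding both contributions gives $(\varepsilon,h)$-thinness of the whole $E$ with $\varepsilon$ below the threshold required by Corollary~\ref{thm:bounds_derivative}.

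The main obstacle will be the clean control of the folding piece $T(r_0)$, which is unbounded along $\partial S^+$ and along the infinite vertical rays; showing its thinness relies on the explicit geometry of the vertex placement (in particular the asymptotics of $\cosh^{-1}$) to defeat the weight $h$. The annular contribution, by contrast, is routine once the $d_n$ grow exponentially. Once $C$ is fixed, one takes any $\lambda^0\in\pi\Nstar$ with $\lambda^0\geq 10C$, and the disks $(U_n)_{n\geq 1}$ are then furnished directly by Lemma~\ref{lem:construction_Un}.
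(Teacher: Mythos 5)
Your argument follows essentially the same strategy as the paper: show that the dilatation support of $\phi$ is $(\varepsilon,h)$-thin with an exponential weight $h$, apply the close-to-identity estimate for quasiconformal maps with thin dilatation (the paper applies Theorem~\ref{thm:close_to_id} and then inlines the argument of Corollary~\ref{thm:bounds_derivative}; you invoke the corollary directly), obtain a parameter-independent lower bound $\phi'(x)\geqslant 1/C$ on the positive real axis, and finally take $\lambda^{0}\in\pi\Nstar$ with $\lambda^{0}\geqslant 10C$. The paper takes $\varepsilon=1$ and $h(r)=e^{-r}$ outright, where you take $h(r)=e^{-ar}$ for a small $a\in(0,1)$; both choices work, and your split of the dilatation support into the annular pieces~(\ref{eq:support}) and the folding neighborhood, with the estimate $\pi(1-(3/4)^{2/d_n})=O(1/d_n)$ for the former, is a clean way to organise the same computation.

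One point in your verification of thinness is stated loosely. The assertion that ``edges of $T$ at real part $R$ have length of order $e^{-R}$'' is correct along $\partial S^{+}$ and along the finite connecting segments (via the $\cosh^{-1}$ asymptotics), but on the vertical rays above each $D_{n}$ the additional vertices are equally spaced with constant gap $\pi/d_{n}$, so the tube $T(r_{0})$ has constant width $\sim r_{0}/d_{n}$ as $\mathrm{Im}\,z\to+\infty$, and a literal reading of $\area(T(r_{0})\cap D(z,1))\leqslant\varepsilon h(|z|)$ with $h(|z|)=e^{-a|z|}$ fails for $|z|$ large along a fixed ray. What one must bound is not the full neighbourhood $T(r_{0})$ but the actual support of the dilatation of $\sigma\circ\eta$, which near the rays is contained in the $\tau$-preimage of a bounded vertical strip of $\Hr$ and therefore does shrink exponentially with height (because $\tau|_{S_{n}}=\lambda_{n}\sinh\circ\phi_{n}$ has exponentially growing derivative). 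The published proof glosses over the same distinction, so this is not a new gap, but if you want the estimate to be airtight you should make explicit that $E$ is the dilatation support rather than all of $T(r_{0})$.
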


\begin{proof}
The proof is a direct consequence of Theorem \ref{thm:close_to_id}. Let $\varepsilon:=1$ and $h(x):=\exp(-x)$. It follows from the definition of $T(r)$ (see Subsection \ref{sec:Bishop_theorem}) and the construction of the graph $T$ in the previous subsection that as $\lambda$ and the $(d_{n})_{n\geqslant 1}$ increase, the area of the neighborhood $T(r_{0})$ of the graph $T$ decreases. Indeed, on the one hand, as $\lambda$ increases, the vertices of the graph on the boundary of $S^{+}$ get exponentially close to each other (because of the definition of $\tau(z)=\lambda\sinh(z)$ on $S^{+}$). On the other hand if the $(d_{n})_{n\geqslant 1}$ increase, the vertices on the boundary of the $(D_{n})_{n\geqslant 1}$ also get close to each other and the support of the dilatation of the map $\sigma\circ\tau$ given by (\ref{eq:support}) is getting exponentially small with the $(d_{n})_{n\geqslant 1}$.

Taking this into account we claim that there exist $\lambda^{0}\in\pi\Nstar$ and $\alpha>0$ such that if $\lambda>\lambda^{0}$ and $d_{n}\geqslant d_{n}^{0}:=2[\exp(\alpha n)]$ then $T(r_{0})$ (the support of the dilatation of $\phi$) satisfies 
\begin{equation}
	\area\left(T(r_{0})\cap D(z,1)\right)\leqslant\exp(-R),
\end{equation}
no matter the value of the $(w_{n})_{n\geqslant 1}$ in $\neighbor_{1/2}$. So, the dilatation of $\phi$ occurs on a set which is $(1,h)$-thin. Moreover we also have that $\phi$ is conformal in $S^+$. Consequently Theorem \ref{thm:close_to_id} implies that $|\phi'(x)|\geqslant 1/C$ for all $x\in\R$ with $C$ not depending on the parameters. Up to replacing $\lambda^{0}$ by a real number in $\pi\Nstar$ larger than $\max\{\lambda^{0},10C\}$,  condition (\ref{eq:parameters_assumption}) is satisfied. 
\end{proof}

Let $\left(\lambda^{0},(d_{n}^{0})_{n\geqslant 1},(w_{n}^{0}:=1/2)_{n\geqslant 1}\right)$ be a choice of parameters coming from Lemma \ref{lem:lambda}. We are going to modify the parameters $((d_{n}^0)_{n\geqslant 1},(w_{n}^0)_{n\geqslant 1})$ but $\lambda^{0}$ will be fixed from now on. We consider the $(n+1)$-th iterate of every $U_{n}$ by $f$. Statement \textbf{(d)} of Lemma \ref{lem:construction_Un} gives $f^{n}(U_{n})\subset\frac{1}{4}\widetilde{D_{n}}$. If follows from  Theorem \ref{thm:close_to_id} that $\phi$ is close to the identity, so we have $\phi(\frac{1}{4}\widetilde{D_{n}})\subset\frac{1}{2}\widetilde{D_{n}}$. Then it follows from \textbf{(b)} in Theorem \ref{thm:prototype}, and from the definition of the quasiconformal map $\rho_{n}$, that $f^{n+1}(U_{n})\subset D(w_{p_{n}}^{0},(1/2)^{d_{p_{n}}})$ (by applying Schwarz lemma to $z\mapsto\rho_{n}(z)-w_{n}^{0}$). We are going to adjust $d_{p_{n}}^{0}$ and $w_{p_{n}}^{0}$ in order that $f^{n+1}(U_{n})\subset U_{n+1}$. For this, we need the two following lemmas. The first one deals with the $(d_{p_{n}})_{n\geqslant 1}$ while the second one deals with the $(w_{p_{n}})_{n\geqslant 1}$ (and modify, if necessary, the $(d_{p_{n}})_{n\geqslant 1}$ again).

\begin{lem}\label{lem:dn}
Let $\left(\lambda^{0},(d_{n}^{0})_{n\geqslant 1},(w_{n}^{0}:=1/2)_{n\geqslant 1}\right)$. Then there exists a sequence of positive real numbers $(r_{n})_{n\geqslant 1}$ such that for every new choice of parameters $(d_{n})_{n\geqslant 1}$ with $d_{n}\geqslant d_{n}^{0}$ for every $n\geqslant 1$, the corresponding maps $f$ and $\phi$ satisfy  condition {\rm (\ref{eq:parameters_assumption})} as well, and
$$
\forall n\geqslant 1,\quad 0.009\left(\frac{d}{dx}f^{n}\left(\frac{1}{2}\right)\right)^{-1}\geqslant r_{n}.
$$
In particular, we may assume that for all those parameters, every euclidean disk $U_{n}$ in Lemma \ref{lem:construction_Un} has radius larger or equal than $r_{n}$, and consequently we may choose $d_{p_n}$ such that 
\begin{equation}\label{eq:dpn}
	\left(\frac{1}{2}\right)^{d_{p_{n}}}<r_{n+1}.
\end{equation}
\end{lem}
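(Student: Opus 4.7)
The plan is to establish uniform upper bounds on the derivatives $\frac{d}{dx}f^{n}(1/2)$, valid for every choice of $(d_{n})_{n\geqslant 1}$ with $d_{n}\geqslant d_{n}^{0}$, and then to set $r_{n}:=0.009/R_{n}$ with $R_{n}$ such a uniform bound. The factor $0.009$ from Lemma \ref{lem:construction_Un}(a) then immediately yields the required lower bound on the radius of $U_{n}$, and the last inequality (\ref{eq:dpn}) is a direct consequence, since once $r_{n+1}$ is fixed we may simply enlarge $d_{p_{n}}$ beyond $d_{p_{n}}^{0}$ to make $(1/2)^{d_{p_{n}}}$ arbitrarily small.

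The key observation is a monotonicity property: enlarging any $d_{n}$ from $d_{n}^{0}$ only shrinks the support of the dilatation of the quasiregular map $\sigma\circ\eta$. Indeed, the annulus in $D_{n}$ where the dilatation of $\sigma\circ\tau$ lives, described in (\ref{eq:support}), has inner radius $(3/4)^{1/d_{n}}$, which increases with $d_{n}$; and subdividing edges of $T$ more finely on $\partial D_{n}$ only shrinks $T(r_{0})$. Hence, for every admissible $(d_{n})$, the support of the dilatation of the integrator $\phi$ is contained in the support corresponding to $(d_{n}^{0})$, and in particular remains $(\varepsilon^{0},h)$-thin with the same $\varepsilon^{0}$ and $h(r)=\exp(-r)$ as in the proof of Lemma \ref{lem:lambda}. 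Since the quasiconformal dilatation $K$ of $\phi$ is also uniformly bounded by Theorem \ref{thm:prototype}(d), Theorem \ref{thm:close_to_id} (in the form used in Lemma \ref{lem:lambda}) produces a constant $C^{0}\geqslant 1$, depending only on $K$, $h$, and $\varepsilon^{0}$, such that $1/C^{0}\leqslant|\phi'(x)|\leqslant C^{0}$ for every $x\in\R$, uniformly in $(d_{n})$. Condition (\ref{eq:parameters_assumption}) is thus preserved, provided $\lambda^{0}\geqslant 10\,C^{0}$, which we arrange exactly as in Lemma \ref{lem:lambda}.

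With $\phi(0)=0$ and $0\leqslant\phi'(x)\leqslant C^{0}$ on $\R$, integration gives $0\leqslant\phi(x)\leqslant C^{0}x$ for $x\geqslant 0$. Since $f(x)=\cosh(\lambda^{0}\sinh(\phi(x)))$ on the positive real axis by Theorem \ref{thm:prototype}(b), the orbit $x_{k}:=f^{k}(1/2)$ is dominated by the sequence defined by $X_{0}:=1/2$ and $X_{k+1}:=\cosh(\lambda^{0}\sinh(C^{0}X_{k}))$. Likewise, from
$$
f'(x_{k})=\sinh(\lambda^{0}\sinh(\phi(x_{k})))\,\lambda^{0}\cosh(\phi(x_{k}))\,\phi'(x_{k}),
$$
the monotonicity of $\sinh$ and $\cosh$ on $[0,+\infty)$, together with $\phi(x_{k})\leqslant C^{0}X_{k}$ and $\phi'(x_{k})\leqslant C^{0}$, yields $f'(x_{k})\leqslant F_{k}$ where $F_{k}:=\sinh(\lambda^{0}\sinh(C^{0}X_{k}))\,\lambda^{0}\cosh(C^{0}X_{k})\,C^{0}$. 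Both $(X_{k})$ and $(F_{k})$ depend only on $\lambda^{0}$ and $C^{0}$, hence $R_{n}:=\prod_{k=0}^{n-1}F_{k}$ is a uniform upper bound for $\frac{d}{dx}f^{n}(1/2)$. Setting $r_{n}:=0.009/R_{n}$ gives the main inequality, and since $r_{n+1}$ depends on nothing that has yet to be chosen for $d_{p_{n}}$, we may freely enlarge $d_{p_{n}}$ beyond $d_{p_{n}}^{0}$ to obtain (\ref{eq:dpn}).

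The main obstacle is ensuring that every constant appearing along the way---the thinness parameter $\varepsilon^{0}$, the dilatation $K$ of $\phi$, and the constant $C^{0}$ controlling $\phi'$---is genuinely independent of the choice of $(d_{n})_{n\geqslant 1}$ within the admissible family. The monotonicity of the dilatation support in $(d_{n})$ is precisely what reduces this uniformity question to the single estimate already established for the baseline parameters $(d_{n}^{0})$.
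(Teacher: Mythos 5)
Your proof is correct, and it takes a genuinely different (and more explicit) route than the paper. The paper argues by compactness: it considers arbitrary increasing families $(d_n^j)_{j}$ with $d_n^j\geqslant d_n^0$, notes that the corresponding integrators $\phi_j$ form a normal family (uniformly $K$-quasiconformal, dilatation supported on shrinking sets), extracts a locally uniformly convergent subsequence $\phi_{j_\ell}\to\Phi$, concludes that the restricted maps $f_{j_\ell}=(\sigma\circ\eta)_{j_\ell}\circ\phi_{j_\ell}$ converge on compacts of $S^+$, and from this infers that, for each fixed $n$, the quantities $0.009\bigl(\tfrac{d}{dx}f^n(1/2)\bigr)^{-1}$ admit a positive lower bound $r_n$ independent of the admissible choice of $(d_m)$. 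You instead go for a quantitative argument: you observe that enlarging the $d_n$ only shrinks (or at worst preserves the $(\varepsilon^0,h)$-thinness of) the dilatation support, hence Theorem \ref{thm:close_to_id} and Corollary \ref{thm:bounds_derivative} yield a single constant $C^0$ with $1/C^0\leqslant\phi'\leqslant C^0$ on $\R$ for every admissible $(d_n)$; from this you propagate explicit upper bounds $X_k$ on the orbit and $F_k$ on $f'(x_k)$, and define $r_n:=0.009/\prod_{k<n}F_k$. The end result is the same, but your version makes the uniformity transparent and avoids the subsequence-extraction step, whereas the paper's argument is softer and shorter but leaves to the reader the step from ``some convergent subsequence exists'' to ``a uniform lower bound on $r_n^j$ holds for all $j$.'' One minor slip worth fixing: the dilatation support for larger $d_n$ need not be literally contained in the support for $d_n^0$ (the new vertices on $\partial D_n$ are not generally a refinement of the old ones), but your fallback claim---that the area in any unit disk only decreases, so the same $(\varepsilon^0,h)$-thinness holds---is the correct and sufficient statement, and it is what your argument actually uses.
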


\begin{proof}
For every $n\geqslant 1$, let $(d_{n}^{k})_{j\geqslant 1}$ be any increasing sequence of positive integers so that $d_{n}^{j}\geqslant d_{n}^{0}$ for all $j\geqslant 1$. The dilation of the corresponding integrating function $\phi_{j}$ (we write $\phi_{j}$ to denote the integrating function corresponding to the sequence $(d_{n}^{j})_{n\geqslant 1}$) remains uniformly bounded by $K$ (see \textbf{(d)} in Theorem \ref{thm:prototype}) while, again from construction, its the support is getting smaller with $j$. Consequently we can take a subsequence $(\phi_{j_{\ell}})_{\ell\geqslant 1}$ that converges uniformly to a $K$-quasiconformal map $\Phi$ in compact subsets of $\C$. Since $f_{j}=(\sigma\circ\eta)_{j}\circ\phi_{j}$ with $(\sigma\circ\eta)_{j}(z)=\cosh(\lambda\sinh(z))$ in $S^{+}$ we also have that $(f_{j_{\ell}})_{\ell\geqslant 1}$ converges in compact subsets of $S^{+}$. In particular for all possible choices of sequence $(d_{n}^{j})_{j\geqslant 1}$ the $(r_{n}^{j})_{j\geqslant 1}$ have a positive lower bound for each $n\geqslant 1$. Hence we may chose a sequence $(d_{n})_{n\geqslant 1}$ so that $(d_{p_{n}})_{n\geqslant 1}$ satisfies (\ref{eq:dpn}).
\end{proof}

We assume $U_{n}\subset\neighbor_{1/2}$ for all $n$ (if necessary we just consider $n$ large enough so that this happens). From (\ref{eq:dpn}) we have chosen the $d_{p_n}$ in such a way that the open disk $U_{n}$ is mapped under $f^{n+1}$ inside an euclidean disk centered at $1/2$ and of radius strictly less than $r_{n+1}$, which in turn is strictly less than the diameter of $U_{n+1}$. Let $w'_{n}$ be the center of the disk $U_{n}$. 

To finish the proof of Theorem \ref{thm:Bishop_example} we should modify $f$ so that 
$$
f^{n+1}(U_{n})\subset D(w'_{n+1},(1/2)^{d_{p_{n}}})\subset 
D(w'_{n+1},r_{n+1})\subset U_{n+1}.
$$
This is proved recursively in the following lemma.

\begin{lem}\label{lem:Bishop_example}
There exists a transcendental entire function $f$ in class $\classB$ coming from Theorem \ref{thm:prototype} for a choice of the parameters $(\lambda,(d_{n})_{n\geqslant 1},(w_{n})_{\geqslant 1})$ which satisfies  condition {\rm (\ref{eq:parameters_assumption})} in Lemma \ref{lem:construction_Un}, such that for every $n\geqslant N$ large enough
$$
f^{n+1}(U_{n})\subset U_{n+1}.
$$
In particular, the euclidean disks $(U_{n})_{n\geqslant N}$, for $N$ large enough,   lie in wandering Fatou domains for $f$.
\end{lem}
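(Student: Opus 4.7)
The plan is to fix the free parameters $(w_{p_n})_{n\geqslant 1}$ by a recursive scheme so that, after everything is set up, the centre $w_{p_n}$ of the critical value of $\rho_{p_n}$ coincides with the centre $w'_{n+1}$ of the disk $U_{n+1}$. By the discussion preceding the lemma, for any admissible choice of parameters satisfying condition (\ref{eq:parameters_assumption}) and inequality (\ref{eq:dpn}) we already have
$$f^{n+1}(U_n)\subset D\bigl(w_{p_n},(1/2)^{d_{p_n}}\bigr),$$
with $(1/2)^{d_{p_n}}<r_{n+1}$ and $r_{n+1}$ a lower bound for the radius of $U_{n+1}$. Consequently, if we could arrange $w_{p_n}=w'_{n+1}$ for every $n\geqslant N$, the inclusion $f^{n+1}(U_n)\subset U_{n+1}$ would follow. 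The difficulty is that the centre $w'_{n+1}$ depends on $f$, hence on the whole sequence $(w_{p_k})_k$ of parameters we are trying to choose.

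To unravel this circular dependence, I propose to construct the parameters by successive approximation. Set $w_{p_n}^{(0)}:=1/2$ for every $n\geqslant 1$, and define inductively $w_{p_n}^{(j+1)}:=w'_{n+1}{}^{(j)}$, where $w'_{n+1}{}^{(j)}$ is the centre of $U_{n+1}$ computed using Theorem \ref{thm:prototype} with parameters $(\lambda^{0},(d_n)_n,(w_{p_n}^{(j)})_n)$ and Lemma \ref{lem:construction_Un}. The key point is that the support of the Beltrami coefficient of the integrating map $\phi^{(j)}$ is contained in the fixed $(\varepsilon,h)$-thin set $T(r_0)$, which does not depend on the $(w_{p_n}^{(j)})$, while the dependence on the $w_{p_k}^{(j)}$ is localized in the annular regions (\ref{eq:support}) inside the disks $D_{p_k}$, far from $S^+$ where the relevant part of the orbit lies. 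Theorem \ref{thm:close_to_id} then provides uniform closeness of $\phi^{(j)}$ to the identity on compacta, depending only on $K$ and $h$ and not on the particular $w_{p_n}^{(j)}\in\neighbor_{1/2}$. Consequently small variations of the parameters produce small variations of $\phi^{(j)}$, hence of $f^{(j)}$, hence of the iterates $(f^{(j)})^{k}(1/2)$ and ultimately of the centres $w'_{n+1}{}^{(j)}$. A contraction/Cauchy argument then yields a limiting choice $(w_{p_n}^{*})_n$ fixing $w_{p_n}^{*}=w'_{n+1}$ for every $n\geqslant N$, whereupon the desired containment $f^{n+1}(U_n)\subset U_{n+1}$ holds for the limit map $f$.

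Once this chain of inclusions is established, the final sentence of the lemma follows easily. The family $\{f^k\}_{k\geqslant 0}$ restricted to a neighbourhood of $U_n$ omits $\infty$, because the forward orbit passes successively through the bounded disks $\tfrac14\widetilde{D_{p_k}}$ and returns near $1/2\in\neighbor_{1/2}$; hence Montel's theorem places each $U_n$ in a Fatou component $V_n$ of $f$. The components are pairwise distinct, since $f^{n+1}(U_n)\subset\neighbor_{1/2}$ brings the orbit back arbitrarily close to $1/2$ while intermediate iterates $f^{n}(U_n)\subset \tfrac14\widetilde{D_{p_n}}$ move away to infinity as $n\to+\infty$ (the escape of the orbit of $1/2$ from Lemma \ref{lem:construction_Un}); thus the orbit of $V_n$ oscillates and no two distinct iterates can lie in a common Fatou component, which is exactly the definition of a wandering domain.

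The hard part will be the quantitative step that turns the formal iteration $w_{p_n}^{(j+1)}=w'_{n+1}{}^{(j)}$ into a genuinely convergent Cauchy sequence rather than a merely precompact one. Qualitative subsequential convergence of $(\phi^{(j)})$ on compacta is immediate from normality of $K$-quasiconformal families, but to get the fixed point identity uniformly in $n$ one needs to quantify the modulus of continuity of the map $(w_{p_k})_k\mapsto(w'_n)_n$. This is precisely where Theorem \ref{thm:close_to_id} carries most of the weight: the exponential decay of $h$ combined with the exponential growth of $\tfrac{d}{dx}f^n(1/2)$ enforced by condition (\ref{eq:parameters_assumption}) should ensure that perturbations of the parameters at index $k$ influence $w'_{n+1}$ with a factor that is summable in $k$, yielding contraction of the scheme.
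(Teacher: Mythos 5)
Your overall strategy --- recursively choosing the parameters $w_{p_n}$ so that the critical value of $\rho_{p_n}$ lands on the centre of the target disk, with Theorem \ref{thm:close_to_id} controlling how the integrating map $\phi$ responds to those choices --- is exactly the right one, and is the one the paper follows. However, you set this up as a genuine Picard-type fixed-point iteration $w_{p_n}^{(j+1)} = w'_{n+1}{}^{(j)}$ and then need a contraction estimate that you explicitly say you have not established. That is a real gap, not a routine one, and as written the argument does not close. The map from the sequence $(w_{p_k})_k$ to the sequence of centres $(w'_n)_n$ is a composition of the Measurable Riemann Mapping integrator with many iterates of $f$; nothing in Theorem \ref{thm:close_to_id} by itself gives a Lipschitz constant $<1$ for this map, and your appeal to the exponential growth of $\frac{d}{dx}f^n(1/2)$ cuts the wrong way --- large derivatives along the orbit amplify, not damp, the effect of perturbing a parameter on a later landing point.

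The paper sidesteps the contraction issue entirely by making a \emph{single} pass through the steps and, critically, by using a freedom you do not invoke: at step $n$, one may take $d_{p_n}$ larger. By \eqref{eq:support} the Beltrami perturbation coming from modifying $\rho_{p_n}$ is supported on an annulus whose area can be made as small as one wishes by increasing $d_{p_n}$, so Theorem \ref{thm:close_to_id} makes the induced change in $\phi$ (hence in $f$ on the disks of radius $1$ centred at $x_0,\dots,x_{n+1}$) less than a prescribed $t_n$, with $\sum_n t_n < \varepsilon$. Because each later correction is smaller than the room left in the earlier inclusions, and because the definition of $f$ never changes in $S^+$, the already-verified inclusions up to step $n$ survive every subsequent modification, and the limit map satisfies $f^{n+1}(U_n)\subset U_{n+1}$ for all large $n$. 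To repair your proposal along the lines you have sketched you would either need to prove the contraction estimate (which is nontrivial and not obviously true with the $d_{p_n}$ frozen), or, more naturally, import the degree-bumping device and run a forward recursion with summable perturbations as in the paper. The concluding paragraph of your argument (Montel, distinct Fatou components, wandering) is fine.
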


\begin{proof}
Let $\left(\lambda^{0},(d_{n})_{n\geqslant 1},(w_{n}^{0}:=1/2)_{n\geqslant 1}\right)$ be a choice of parameters given by Lemma \ref{lem:dn}. We will adjust the parameters recursively. Assume we are in  step $n\geqslant 1$. We want to  modify $\rho_{p_{n}}$ so that it sends $0$ to $w'_{n+1}$ instead of sending $0$ to $w_{p_{n}}^{0}=1/2$ (see \textbf{(b)} of Theorem \ref{thm:prototype}). Doing this, it is clear from Lemma \ref{lem:dn} that $f(\frac{1}{4}\widetilde{D_{n+1}})\subset U_{n+1}$ but we need to be sure that still $f^{n+1}(U_{n+1})\subset\frac{1}{4}\widetilde{D_{n+1}}$. 

Of course the introduced modification caused a correction on the definition of $\phi$ (and hence in the definition of $f$). However the \textit{new complex dilatation} associated to this slight modification of $\rho_{p_{n}}$ is concentrated on an annulus of area as small as we wish by considering, if necessary, a larger $d_{p_{n}}$ (see (\ref{eq:support})). Using again Theorem \ref{thm:close_to_id} this implies that $\phi$ is as close as we wish to the identity in the whole plane; in particular the correction is less than $t_{n}\ll 1$ in every disk of radius 1 centered at $\{x_{0},x_{1},\dots,x_{n+1}\}$. 

Since $t_{n}$ is as small as we wish and the definition of $f$ has not changed in $S^{+}$, we have that still $f^{n+1}(U_{n+1})\subset\frac{1}{4}\widetilde{D_{n+1}}$ for the \textit{new} $f$. Notice that at this step of the construction we cannot guarantee that (the new) $f$ satisfies statement \textbf{(d)} in Lemma \ref{lem:construction_Un} for every $n\geqslant 1$, since slightly modifications (may) have a huge influence on the iterates in the long term, but we are sure that this is true until then $n$-th step, and this is all we need. 

Now we repeat the argument with $\rho_{p_{n+1}}$. Doing this procedure, we  modify the definition of $f$ in every disk of radius 1 centered at each $(x_{k})_{k\geqslant 0}$ infinitely often (one at each step of the recursive process) by the quantity of $t_{n}$. If we choose the sequence $t_{n}$ so that the sum over all $n$'s is less than a prefixed $\varepsilon>0$ we are sure that the limit function under this procedure will satisfy statement \textbf{(d)} in Lemma \ref{lem:construction_Un} for every $n\geqslant 1$, that is     
$$
f^{n+1}(U_{n})\subset U_{n+1},
$$
as desired.
\end{proof}

\begin{proof}[Proof of Theorem \ref{thm:Bishop_example}] It is a straightforward consequence of Lemma \ref{lem:Bishop_example} above.
\end{proof}


\section{Proof of Theorem \ref{thm:A}}\label{sec:proof_thmA}\label{sec:thmA}

Let $f$ be the map obtained from Lemma \ref{lem:Bishop_example}. Recall that every critical value $w_{n}$ lies in $f(\frac{1}{4}\widetilde{D_{n}})$ since $D_{n}=D(z_{n},1)$ and $w_{n}=f(z_{n})$. For every $n\geqslant 1$, assume that either $w_{n}=\frac{1}{2}$ or $f(\frac{1}{4}D_{n})\subset\cup_{m\geqslant 1}U_{m}$. Our goal is to prove  that any wandering  domain $W$ of $f$, if it exists, eventually intersects, under iteration, the $(U_{n})_{n\geqslant 1}$, or their symmetric sets with respect to the real axis (see Lemma \ref{lem:thmA_no_unexpected}). In particular, if the parameters $(\lambda,(d_{n})_{n\geqslant 1},(w_{n})_{n\geqslant 1})$ are chosen as explained in the previous section, the only wandering domains of $f$ are the preimages of the Fatou components which contain the $(U_{n})_{n\geqslant 1}$, and their symmetric ones with respect to the real axis. This will establish Theorem \ref{thm:A}.

We denote by $A^{\star}$ the union of any subset $A$ of the complex plane with its symmetric set with respect to the real axis, equivalently $z\in A^{\star}\iff z\in A\text{ or }\overline{z}\in A$.

The first step of the proof deals with the limit functions of the iterates of $f$ restricted to any wandering domain. 

\begin{lem}\label{lem:thmA_Baker}
Let $W$ be a wandering domain of $f$. Then there exist two subsequences of positive integers $(n_{k})_{k\geqslant 1}$ and $(m_{k})_{k\geqslant 1}$ such that $f^{n_{k}}|_{W}\to\frac{1}{2}$ as $k\to+\infty$ and $f^{n_{k}-1}(W)\subset D_{m_{k}}^{\star}$ for every $k\geqslant 1$.
\end{lem}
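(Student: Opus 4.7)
The plan is to combine Baker's classification of limit functions on a wandering domain with a descending pullback on the orbit $x_k := f^k(1/2)$, finishing with an explicit analysis of the preimages of $1/2$.

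First I would identify $E' \cap \C$, where $E = \bigcup_{s\in S(f)} \bigcup_{n \geqslant 0} f^{n}(s)$ is the post-singular set. Since $f \in \classB$ admits no escaping wandering domain (Eremenko--Lyubich), at least one limit function of $\{f^{n}|_{W}\}$ is finite, and by the improvement of Baker's theorem in \cite{LimitFunctionsIteratesWanderingDomains} any such limit lies in $\Julia(f) \cap E'$. The singular set $S(f) = \{\pm 1\} \cup \overline{\{w_{n}\}}$ accumulates only at $1/2$; the orbits of $\pm 1$ and of $1/2$ escape monotonically along $\R_{+}$ and contribute no finite accumulation; and for each fixed $k\geqslant 0$, continuity of $f^{k}$ together with $w_{n}\to 1/2$ gives $f^{k}(w_{n})\to x_{k}$ as $n\to\infty$. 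A case analysis of how each orbit of a $w_{n}$ shadows the orbit of $1/2$ for its first $n$ iterates, then lands in $\tfrac{1}{4}\widetilde{D_{p_{n}}}$ and reenters via $U_{n+1}$ (Lemmas \ref{lem:construction_Un} and \ref{lem:Bishop_example}), shows no other finite accumulations appear. Hence $E'\cap\C = \{x_{k}\,:\,k\geqslant 0\}$, and every such point lies in $\mathcal{I}(f)\subset\Julia(f)$, so every finite limit of $\{f^{n}|_{W}\}$ equals some $x_{k}$.

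Next I would show $x_{0}=1/2$ is always a limit function, by descending induction on $k$. Pick a subsequence with $f^{n_{j}}|_{W}\to x_{k}$ for some $k\geqslant 1$. Since $W\subset\Fatou(f)$, the subfamily $\{f^{n_{j}-1}|_{W}\}_{j}$ is normal on $W$; extracting a further subsequence gives a limit $g$ which, from $f\circ g\equiv x_{k}$ and the discreteness of $f^{-1}(x_{k})$, must be a constant $a'\in f^{-1}(x_{k})$. As $a'$ is itself a limit function of $W$, Baker forces $a'\in\{\infty\}\cup (E'\cap\Julia(f))$. I would rule out $a'=\infty$ by applying Theorem \ref{thm:Rempe} in the hyperbolic Riemann surface $U:=\C\setminus\overline{E}$ (after deleting the countable set $W\cap f^{-n}(\overline{E})$ from the orbit so the hypotheses apply): since the hyperbolic distance $\dist_{U}(f^{n_{j}}(W),U\setminus V)$ tends to infinity for any fixed small disk $V$ around $x_{k}$, the pullback conclusion forces $f^{n_{j}-1}(W)$ to sit deep inside one preimage component of $V$ and thus not escape to infinity. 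Therefore $a'\in\{x_{\ell}\}$, and injectivity of $f$ on $\{x_{\ell}\}$ gives $a'=x_{k-1}$. Iterating $k$ times produces the required subsequence $(n_{k})$ with $f^{n_{k}}|_{W}\to 1/2$.

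For the second claim, I would analyze the components of $f^{-1}(V)$ for a small disk $V$ around $1/2$. By statement \textbf{(b)} of Theorem \ref{thm:prototype}, these components are of two types: (i) a small neighborhood of the critical point $z_{m}$ inside $D_{m}^{\star}$, for each $m\geqslant 1$ (since $f|_{D_{m}}$ has critical value $w_{m}$ close to $1/2$), and (ii) small topological disks centered at the regular preimages of $1/2$ in $\phi^{-1}(S^{+})\cup\phi^{-1}(\overline{S^{+}})$, namely the solutions $\zeta$ of $\cosh(\lambda\sinh(\zeta))=1/2$ close to the imaginary axis. Since $f^{n_{k}-1}(W)$ is connected and is mapped by $f$ into $V$, it lies in exactly one such component. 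A second application of Theorem \ref{thm:Rempe} rules out case (ii): the type (ii) components have uniformly bounded hyperbolic diameter in $U$ (as small disks near isolated points of $S^{+}$), whereas the pullback again forces $f^{n_{k}-1}(W)$ to be deep inside its component. Thus $f^{n_{k}-1}(W)\subset D_{m_{k}}^{\star}$ for some $m_{k}\geqslant 1$.

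The main obstacles are, first, the explicit identification $E'\cap\C=\{x_{k}\}$, which requires carefully tracking the intricate orbits of the centers $w'_{n}$ of the wandering disks and checking that no stray accumulation appears near the boundaries of the $U_{\ell}$'s or along the escape route through $\tfrac{1}{4}\widetilde{D}_{p_{n}}$; and second, the clean setup of the hyperbolic Riemann surface $U$ for Theorem \ref{thm:Rempe}, since wandering domains in Bishop's example contain singular values and so $W\not\subset\C\setminus\overline{E}$ a priori, so one must work with the orbit restricted to the complement of a countable set.
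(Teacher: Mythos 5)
Your approach is genuinely different from the paper's, and it has real gaps. The paper does not use Theorem~\ref{thm:Rempe} at this point at all; it follows Baker's original argument from \cite{LimitFunctions} directly. Concretely, the paper takes a finite constant limit $a = f^{\ell}(1/2)$, observes that the inverse branches $G_{n_{k}}=(f^{n_{k}}|_{W})^{-1}$ cannot all extend analytically to the neighbourhood $\neighbor_{a}=f^{\ell}(\neighbor_{1/2})$ (else a normal-family argument would degenerate $W$ to a point), and therefore some critical point obstructs the pullback. Because $\neighbor_{a}$ is chosen disjoint from the orbit of $\pm 1$, the obstructing critical point must be some $z_{m_{k}}$ or $\overline{z_{m_{k}}}$, whose critical values $w_{m_{k}}\to\frac{1}{2}$. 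This identifies $\ell_{k}=\ell$, gives $f^{n_{k}-\ell}|_{W}\to\frac{1}{2}$, and simultaneously places $f^{n_{k}-\ell-1}(W)$ inside $D_{m_{k}}^{\star}$ --- both conclusions in a single stroke, with no hyperbolic geometry needed. The paper even flags explicitly that knowing $E'=\{x_{k}\}$ by itself ``is not enough to deduce that $\frac{1}{2}$ is a constant limit function,'' which is exactly the trap your descending induction runs into.

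Two specific problems with your argument. First, the application of Theorem~\ref{thm:Rempe} with $U=\C\setminus\overline{E}$ fails at the hypothesis stage. You acknowledge $W$ may meet $\overline{E}$ (the wandering domains carry the critical values $w_{n}$) and propose deleting $W\cap\bigcup_{n}f^{-n}(\overline{E})$, but this set is not discrete: $\overline{E}$ already has accumulation points $\{x_{k}\}$, and preimages of a non-discrete set under $f$ accumulate as well, so removing it from $W$ does not yield an open (let alone connected) domain, and the hypotheses of Theorem~\ref{thm:Rempe} simply do not hold. The paper itself avoids this by waiting until Lemma~\ref{lem:thmA_Rempe}, where it applies Theorem~\ref{thm:Rempe} on the complement of a \emph{hand-built} forward-invariant closed set $A$ that omits the orbit of $W$ by construction. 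Second, even granting the application, the conclusion $\dist_{U}(f^{n_{j}-1}(W),U\setminus f^{-1}(V))\to\infty$ does not by itself rule out $f^{n_{j}-1}(W)\to\infty$: $f^{-1}(V)$ has infinitely many components escaping to infinity, $\overline{E}$ is unbounded, and it is far from clear that those far-out components have uniformly small hyperbolic depth in $U$. The same objection applies to your Theorem~\ref{thm:Rempe}-based argument for the second claim (discarding the type~(ii) components). So both halves of your proof rest on an application of Theorem~\ref{thm:Rempe} that is not justified, whereas the paper's Baker-style argument delivers both conclusions cleanly and elementarily.
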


\begin{proof}
It is known that all limit functions of $(f^{n}|_{W})_{n\geqslant 1}$ are constant (see \cite[Section 28]{Fatou1}). Since $f$ is in class $\classB$, it follows from \cite[Theorem 1]{DynamicalPropertiesEntireFunctions} that at least one of these constant limit functions is finite. Moreover Baker \cite{LimitFunctions} showed that the finite constant limit functions are in the closure of the postsingular set $E$. This result was improved later in \cite{LimitFunctionsIteratesWanderingDomains} showing that such limit must be in the derived set $E'$ of the postsingular set. In our case $E'=\{f^{\ell}(\frac{1}{2})\}_{\ell\geqslant 0}$, but this is not enough to deduce that $\frac{1}{2}$ is a constant limit function. However, this will come from the fact that $\frac{1}{2}$ is the only critical value in $E'$, following the proof in \cite{LimitFunctions}.

Indeed, let $a=f^{\ell}(\frac{1}{2})$ be a constant limit function of $(f^{n}|_{W})_{n\geqslant 1}$ for some fixed $\ell\geqslant 0$, and let $(n_{k})_{k\geqslant 1}$ be a subsequence of positive integers such that $f^{n_{k}}|_{W}\to a$ as $k\to+\infty$. Let $\neighbor_{a}$ be a small neighborhood of $a$ disjoint from the orbit of $+1$ (and hence $-1$ as well), say $\neighbor_{a}=f^{\ell}(\neighbor_{1/2})$. Assume by contradiction that for every $n_{k}$ large enough, the inverse branch $G_{n_{k}}=(f^{n_{k}}|_{W})^{-1}$, which maps $f^{n_{k}}(W)\subset\neighbor_{a}$ onto $W$, is well defined and may be extended analytically to $\neighbor_{a}$. It follows from \cite[Lemma 1]{LimitFunctions} that the $G_{n_{k}}$ for $n_{k}$ large enough form a normal family. Without loss of generality, we may assume that $G_{n_{k}}\to G$ uniformly on $\neighbor_{a}$ as $k\to+\infty$ where $G$ is holomorphic. Now let $z$ be any point in $W$, and define $z_{n_{k}}\in\neighbor_{a}$ to be $f^{n_{k}}(z)$ for every $n_{k}$ large enough. It follows that $z=G_{n_{k}}(z_{n_{k}})$ tends to the constant $G(a)$ for every $z\in W$, which is a contradiction.

Therefore, up to taking a subsubsequence $(n_{k})_{k\geqslant 1}$ if necessary, we may assume without loss of generality that $G_{n_{k}}$ is not well defined on $\neighbor_{a}$ for every $k\geqslant 1$. Equivalently, there exists $0\leqslant\ell_{k}<n_{k}$ such that at least one critical point of $f$ belongs to the connected component of $(f^{\ell_{k}+1})^{-1}(\neighbor_{a})$ which contains $f^{n_{k}-\ell_{k}-1}(W)$. Such a critical point can not be mapped to $-1$ or $+1$ since $\neighbor_{a}$ is disjoint from the orbit of $+1$ by definition. The critical point is thus of the form $z_{m_{k}}$ or $\overline{z_{m_{k}}}$ for some $m_{k}\geqslant 1$, and the corresponding critical value $w_{m_{k}}$ or $\overline{w_{m_{k}}}$ belongs to the connected component of $(f^{\ell_{k}})^{-1}(\neighbor_{a})$ which contains $f^{n_{k}-\ell_{k}}(W)$. Up to taking a subsubsequence $(n_{k})_{k\geqslant 1}$ if necessary, we may assume without loss of generality that $(m_{k})_{k\geqslant 1}$ is a strictly increasing sequence, and the critical value $w_{m_{k}}$ or $\overline{w_{m_{k}}}$ is in the connected component of $(f^{\ell})^{-1}(\neighbor_{a})$ which contains $\frac{1}{2}$ (because $w_{n}\to\frac{1}{2}$ as $n\to+\infty$ and $a=f^{\ell}(\frac{1}{2})$), namely in $\neighbor_{1/2}$. It follows that $\ell_{k}=\ell$ for every $k\geqslant 1$ and $f^{n_{k}-\ell}(W)\to\frac{1}{2}$ as $k\to+\infty$ by continuity of $f^{\ell}$. Finally, for every $n_{k}$ large enough, $f^{n_{k}-\ell-1}(W)$ lies in the connected component of $f^{-1}(\neighbor_{1/2})$ containing $z_{m_{k}}$ or $\overline{z_{m_{k}}}$, hence in $D_{m_{k}}^{\star}$.
\end{proof}

Notice that the proof above works regardless of how the parameters $(\lambda,(d_{n})_{n\geqslant 1},(w_{n})_{n\geqslant 1})$ are fixed (as long as condition (\ref{eq:parameters_assumption}) in Lemma \ref{lem:construction_Un} holds). It only uses the fact that $w_{n}\to\frac{1}{2}$ as $n\to+\infty$.

Now we can apply the tool introduced by Mihaljevi\'c-Brandt and Rempe-Gillen \cite{AbsenceWanderingDomains} to discard the existence of wandering domains in certain regions of the plane (see discussion in introduction of this paper). The argument splits in two lemmata.

\begin{lem}\label{lem:thmA_Rempe}
For every $n\geqslant 1$, assume  that either $w_{n}=\frac{1}{2}$ or $f(\frac{1}{4}D_{n})\subset\cup_{m\geqslant 1}U_{m}$. Let $(n_{k})_{k\geqslant 1}$ be as in Lemma \ref{lem:thmA_Baker} for a wandering  domain $W$, and let $A$ be the following closed set
$$
A:=\big]-\infty,-\frac{1}{2}\big]\cup\big[\frac{1}{2},+\infty\big[\cup\bigcup_{n\geqslant 1}\bigcup_{k=0}^{n}\overline{f^{k}(U_{n}^{\star})}.
$$
If $f^{n}(W)\cap A=\emptyset$ for every $n\geqslant 0$, then
$$
\dist_{U}\left(f^{n_{k}-1}(W),U\setminus f^{-1}(\D)\right)\to+\infty\text{ as $k\to+\infty$,}$$
where $\dist_{U}$ denotes the hyperbolic distance in $U:=\C\setminus A$.
\end{lem}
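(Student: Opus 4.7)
The plan is to apply Theorem \ref{thm:Rempe} to $U:=\C\setminus A$, $U':=f^{-1}(U)$, the wandering set $W$, and the target $D:=\D\cap U$. First, I would verify the hypotheses. The surface $U$ is hyperbolic since $A$ contains the unbounded rays $(-\infty,-1/2]$ and $[1/2,+\infty)$. That $f:U'\to U$ is a holomorphic covering map reduces to checking that every singular value of $f$ lies in $A$: by Theorem \ref{thm:prototype}(c) the singular values form the set $\{\pm 1\}\cup\overline{\{w_n\}_{n\geqslant 1}}$; we have $\pm 1\in A$ directly, and each $w_n$ either equals $1/2\in A$ or is the critical value attached to $D_n$, hence lies in $f(\frac{1}{4}D_n)\subset\bigcup_{m\geqslant 1}U_m\subset A$ (using that $\phi$ is close to the identity so that the critical point $\phi^{-1}(z_n)$ belongs to $\frac{1}{4}D_n$); finally the accumulation value $1/2$ is in $A$ as well. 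The hypothesis $f^n(W)\cap A=\emptyset$ for every $n\geqslant 0$ applied to $n$ and $n+1$ gives both $f^n(W)\subset U$ and $f^{n+1}(W)\subset U$, i.e.\ $f^n(W)\subset U'$, as required.

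Next I would verify that $f^{n_k}(W)\subset D$ for $k$ large, which is immediate from Lemma \ref{lem:thmA_Baker}: the convergence $f^{n_k}|_{W}\to 1/2$ uniformly on compacta forces $f^{n_k}(W)\subset\D$ for $k$ large, and then $f^{n_k}(W)\subset U\cap\D=D$. The technical heart of the argument is then the divergence
$\dist_U(f^{n_k}(W),U\setminus D)\to+\infty$, which I would prove by monotonicity of the hyperbolic metric. Since $U\subset V:=\C\setminus[1/2,+\infty)$ one has $\dist_U\geqslant\dist_V$. A branch of $z\mapsto\sqrt{1/2-z}$ identifies $V$ conformally with $\Hr$, sending $1/2$ to $0\in\partial\Hr$ and $U\setminus D\subset V\cap\{|z-1/2|\geqslant 1/2\}$ to a subset of $\Hr\cap\{|w|\geqslant 1/\sqrt 2\}$. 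As $f^{n_k}(W)\to 1/2$, its image in $\Hr$ tends to the boundary point $0$, and the explicit formula for the hyperbolic distance in $\Hr$ shows that its distance to $\Hr\cap\{|w|\geqslant 1/\sqrt 2\}$ diverges. This is the one step that requires a computation, but it is routine once the conformal model is in place.

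Applying Theorem \ref{thm:Rempe} now gives $\dist_U(f^{n_k-1}(W),U\setminus f^{-1}(D))\to+\infty$. To conclude, observe that $D\subset\D$ implies $f^{-1}(D)\subset f^{-1}(\D)$ and hence $U\setminus f^{-1}(\D)\subset U\setminus f^{-1}(D)$, so that
$$
\dist_U\bigl(f^{n_k-1}(W),\,U\setminus f^{-1}(\D)\bigr)\;\geqslant\;\dist_U\bigl(f^{n_k-1}(W),\,U\setminus f^{-1}(D)\bigr)\;\longrightarrow\;+\infty,
$$
which is the desired conclusion.
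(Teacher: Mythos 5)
Your proposal is correct and follows essentially the same approach as the paper: verify that $U=\C\setminus A$ satisfies the hypotheses of Theorem \ref{thm:Rempe} (namely that $A$ is forward-invariant and absorbs all singular values of $f$, so that $f\colon f^{-1}(U)\to U$ is a covering), use Lemma \ref{lem:thmA_Baker} to get $f^{n_k}(W)$ into the disk near $1/2$, check the hyperbolic-distance divergence, and then apply the theorem. You are somewhat more careful than the paper on two technical points --- taking $D=\D\cap U$ rather than $\D$ so that $D\subset U$ as required, and supplying a concrete conformal-model computation for $\dist_U(f^{n_k}(W),U\setminus D)\to+\infty$, which the paper merely asserts from the convergence $f^{n_k}|_W\to 1/2\in\partial U$ --- but these are refinements, not a different route.
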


\begin{proof}
This is a direct application of \cite[Theorem 4.1]{AbsenceWanderingDomains} (see Theorem \ref{thm:Rempe}). Indeed,  from the definition of $f$ and the $(U_{n})_{n\geqslant 1}$ it follows that  $f(A)\subset A$. Moreover $A$ contains all the critical values of $f$ by assumption. Therefore, if $U$ denotes the hyperbolic domain $\C\setminus A$, we have $U':=f^{-1}(U)\subset U$ and $f|_{f^{-1}(U)}:U'\rightarrow U$ is a holomorphic covering map. From Lemma \ref{lem:thmA_Baker}, $f^{n_{k}}(W)\subset\D$ for every $n_{k}$ large enough. Moreover, $\dist_{U}(f^{n_{k}}(W),U\setminus\D)\to+\infty$ as $k\to+\infty$ because $\dist_{U}(f^{n_{k}}(W),\partial\D)$ is uniformly bounded away from $0$ while $f^{n_{k}}|_{W}\to\frac{1}{2}\in\partial U$ as $k\to+\infty$. The proof is concluded by  applying Theorem \ref{thm:Rempe}.
\end{proof}

The following lemma concludes the proof of Theorem \ref{thm:A}.

\begin{lem}\label{lem:thmA_no_unexpected}
Let $W$ be a wandering domain of $f$ and  assume, for every $n\geqslant 1$, that either $w_{n}=\frac{1}{2}$ or $f(\frac{1}{4}D_{n})\subset\cup_{m\geqslant 1}U_{m}$. Then there exist  integers $n,m\geqslant 1$ such that $f^{n}(W)\cap U_{m}^{\star}\neq\emptyset$.
\end{lem}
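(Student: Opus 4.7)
The plan is to argue by contradiction. Assume $f^n(W) \cap U_m^\star = \emptyset$ for every $n \geq 0$ and every $m \geq 1$, and derive a contradiction between Lemmas \ref{lem:thmA_Baker} and \ref{lem:thmA_Rempe}.

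The first step is to verify the hypothesis $f^n(W) \cap A = \emptyset$ of Lemma \ref{lem:thmA_Rempe}. The real rays $(-\infty, -\tfrac{1}{2}] \cup [\tfrac{1}{2}, +\infty)$ lie in the escaping set $\mathcal{I}(f)$: the estimate $f(x) \geq 50 x^2 - 1$ for $x \geq \tfrac{1}{2}$ obtained inside the proof of Lemma \ref{lem:construction_Un}, together with the symmetry $f(-x) = f(x)$, forces every real point outside $(-\tfrac{1}{2}, \tfrac{1}{2})$ to escape to $\infty$; and in class $\classB$ one has $\mathcal{I}(f) \subset \Julia(f)$, so these rays are disjoint from any Fatou component. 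For the pieces $\overline{f^k(U_m^\star)}$ of $A$, an interior intersection with $f^n(W)$ would force $f^n(W)$ to coincide with the wandering Fatou component containing $f^k(U_m^\star)$, placing $W$ in the grand orbit of $U_m$ (contrary to the contradiction hypothesis), while a boundary intersection contradicts $f^n(W) \subset \Fatou(f)$. The assumption of the lemma moreover guarantees that $A$ contains all critical values of $f$, since either $w_n = \tfrac{1}{2} \in [\tfrac{1}{2}, +\infty) \subset A$ or $w_n = f(z_n) \in f(\tfrac{1}{4} D_n) \subset \bigcup_m U_m \subset A$; combined with $f(A) \subset A$ (which uses $f^{n+1}(U_n) \subset U_{n+1}$ from Lemma \ref{lem:Bishop_example}), this makes Lemma \ref{lem:thmA_Rempe} applicable. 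It yields
\[
	\dist_U\big(f^{n_k - 1}(W),\, U \setminus f^{-1}(\D)\big) \longrightarrow +\infty \quad \text{as } k \to +\infty,
\]
for the subsequence $(n_k)_{k \geq 1}$ produced by Lemma \ref{lem:thmA_Baker}.

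The second step is to contradict this divergence using the inclusion $f^{n_k - 1}(W) \subset D_{m_k}^\star$ from the same lemma. Each $D_{m_k}$ is a Euclidean unit disk centered at $z_{m_k} \in \R + i\pi$, and Bishop's construction is translation-quasi-invariant in the sense that the geometry of $T$ and the expression for $f$ in a fixed Euclidean neighborhood of $D_{m_k}$ coincide, up to translation, with a single ``reference'' configuration (quantified by the uniformly bounded geometry of Definition \ref{def:unif_bounded_geom}). Just outside $\partial D_{m_k}$ the map $f$ behaves like $\cosh(\lambda \sinh(\phi(\cdot)))$ on $S^+$ or its pullback via the biholomorphism $\phi_{m_k} \colon S_{m_k} \to S^+$ on a neighboring almost half strip, and in both cases $|f|$ is much larger than $1$; so one can locate a point $z_k^\ast$ at Euclidean distance $O(1)$ from $D_{m_k}$ with $z_k^\ast \notin f^{-1}(\D)$, and by a small perturbation one can also arrange $z_k^\ast \in U$, since the components of $A$ that could meet such a point either lie at uniformly positive distance (the real rays) or are the small controlled disks $\overline{f^n(U_n^\star)} \subset D(z_{p_n}, \tfrac{1}{4})^\star$. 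The hyperbolic metric of $U$ on a fixed Euclidean neighborhood of $D_{m_k}$ is uniformly comparable to the Euclidean metric across all $k$, so $\dist_U(f^{n_k-1}(W), z_k^\ast) = O(1)$, contradicting the previous display.

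The principal obstacle is the geometric argument in the second step: producing the exterior point $z_k^\ast$ uniformly in $k$ while verifying that it avoids the intricate set $A$, and uniformly bounding the hyperbolic metric of $U$ on a neighborhood of $D_{m_k}$. The translational quasi-invariance of Bishop's construction and the uniformly bounded geometry of $T$ are what make these uniform estimates possible, and most of the technical work will consist of making this precise by transferring a single reference estimate to every $D_{m_k}$ via translation.
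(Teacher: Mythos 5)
Your first step is essentially the paper's: verify $f^n(W)\cap A=\emptyset$, confirm $A$ is forward-invariant and contains the singular values, and invoke Lemma \ref{lem:thmA_Rempe}. The second step, however, contains a false claim that undermines the whole contradiction.

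You assert that ``the hyperbolic metric of $U$ on a fixed Euclidean neighborhood of $D_{m_k}$ is uniformly comparable to the Euclidean metric across all $k$,'' and deduce $\dist_U(f^{n_k-1}(W), z_k^\ast) = O(1)$. This comparability fails precisely in the cases that matter. The domain $U=\C\setminus A$ can have components of $A$ \emph{inside} $D_{m_k}^\star$: whenever $m_k=p_{j_k}$ for some $j_k$, the tiny compact set $\overline{f^{j_k}(U_{j_k}^\star)}\subset\frac{1}{4}\widetilde{D_{j_k}}^\star\subset D_{m_k}^\star$ is a component of $A$, and near it the hyperbolic density of $U$ blows up. If $f^{n_k-1}(W)$ happens to lie close to that component, any hyperbolic geodesic from $f^{n_k-1}(W)$ to your exterior point $z_k^\ast$ is forced to have large length even though the Euclidean distance is $O(1)$, so no contradiction with Lemma \ref{lem:thmA_Rempe} is obtained. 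In short, you only contradict the divergence when $D_{m_k}^\star$ contains no component of $A$; you have not ruled out the remaining case.

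The paper's proof turns this remaining case into the actual contradiction rather than trying to refute the divergence directly: since $\partial D_{m_k}^\star$ (the boundary of the relevant component of $f^{-1}(\D)$) is at bounded Euclidean distance from $f^{n_k-1}(W)$, the unbounded hyperbolic distance forces $D_{m_k}^\star$ to contain a component of $A$, namely $\overline{f^{j_k}(U_{j_k}^\star)}$, so $D_{m_k}^\star=\widetilde{D_{j_k}^\star}$; it then forces $f^{n_k-1}(W)$ to approach $\partial A\cap\widetilde{D_{j_k}^\star}\subset\frac{1}{4}\widetilde{D_{j_k}^\star}$, hence to enter $\frac{1}{4}\widetilde{D_{j_k}^\star}$ for large $k$; and then $f^{n_k}(W)$ meets $f(\frac{1}{4}\widetilde{D_{j_k}^\star})\subset\bigcup_m U_m^\star$ by the lemma's hypothesis, contradicting the contrapositive assumption. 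This endgame --- exploiting the blow-up to push $f^{n_k-1}(W)$ into $\frac{1}{4}\widetilde{D_{j_k}^\star}$ --- is the missing idea in your argument. (Incidentally, your translation-quasi-invariance machinery for constructing $z_k^\ast$ is also unnecessary: one may simply use that the component of $f^{-1}(\D)$ containing $f^{n_k-1}(W)$ is essentially the unit disk $D_{m_k}^\star$, whose boundary is at Euclidean distance at most $2$.)
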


\begin{proof}
Assume by contradiction that $f^{n}(W)\cap U_{m}^{\star}=\emptyset$ for every $n,m\geqslant 1$. Since every point of the real axis escapes to infinity under iteration, $f^{n}(W)$ is disjoint from the real axis for every $n\geqslant 0$. It follows that $f^{n}(W)\cap A=\emptyset$ for every $n\geqslant 0$ and Lemma \ref{lem:thmA_Rempe} can be applied. Moreover, from Lemma \ref{lem:thmA_Baker}, $f^{n_{k}-1}(W)$ lies in some $D_{m_{k}}^{\star}$ for every $k\geqslant 1$. Therefore $D_{m_{k}}^{\star}$ must contain some connected component of $A$, namely of the form $f^{j_{k}}(U_{j_{k}}^{\star})\subset\widetilde{D_{j_{k}}^{\star}}$ for some $j_{k}\geqslant 1$. Thus, we have $f^{n_{k}-1}(W)\subset D_{m_{k}}^{\star}=\widetilde{D_{j_{k}}^{\star}}$ for every $k\geqslant 1$. Using Lemma \ref{lem:thmA_Rempe} again, it turns out that $f^{n_{k}-1}(W)$ must intersect $\frac{1}{4}\widetilde{D_{j_{k}}^{\star}}$ for $n_{k}$ large enough. Indeed the Euclidean distance between $f^{n_{k}-1}(W)$ and the boundary of the connected component of $f^{-1}(\D)$ which contains $f^{n_{k}-1}(W)$ remains bounded, therefore Lemma \ref{lem:thmA_Rempe} implies that $f^{n_{k}-1}(W)$ is arbitrarily close to $\partial A\cap \widetilde{D_{j_{k}}^{\star}}$ which is contained in $\frac{1}{4}\widetilde{D_{j_{k}}^{\star}}$ by definition of the disks $(U_{n})_{n\geqslant 1}$ (see Lemma \ref{lem:construction_Un}). Consequently $f^{n_{k}}(W)$ intersects $f(\frac{1}{4}\widetilde{D_{j_{k}}^{\star}})\subset\cup_{m\geqslant 1}U_{m}^{\star}$ by assumption,  which is a contradiction.
\end{proof}


\section{Proof of Theorem \ref{thm:B}}\label{sec:thmB}

The proof is divided in two steps. We first use machinery from Section \ref{sec:Bishop_example} to produce two transcendental entire functions $f$ and $g$ in class $\classB$ such that there is a domain which lies in a periodic Fatou domain for $f$ and $g$ but lies in a wandering domain for $f\circ g$ (compare with \cite[Theorem 4]{CompositionEntireFunctions}). This is the aim of Lemma \ref{lem:thmB_Bishop}. Then we show that $f$ and $g$ have no wandering Fatou domains anywhere in all $\C$, concluding the proof of Theorem \ref{thm:B}. To prove the latter we shall proceed as in Section \ref{sec:thmA}, see Lemma \ref{lem:thmB_no_unexpected}.

\begin{lem}\label{lem:thmB_Bishop}
There exist two transcendental entire functions $f$ and $g$ in class $\classB$ coming from Theorem \ref{thm:prototype} for two choices of the parameters $(\lambda,(d_{n})_{n\geqslant 1},(w_{n})_{\geqslant 1})$ which satisfy  condition {\rm (\ref{eq:parameters_assumption})} in Lemma \ref{lem:construction_Un}, with corresponding sequences of euclidean disks $(U_{n}^{f})_{n\geqslant 1}$ and $(U_{n}^{g})_{n\geqslant 1}$ respectively and same subsequence of positive integers $(p_{n})_{n\geqslant 1}$, such that for every $n\geqslant N$ large enough $U_{n}:=U_{n}^{f}\cap U_{n}^{g}$ is not empty and
$$\left\{\begin{array}{rcl}
	f^{4n+1}(U_{4n\phantom{+1}}) & \subset & U_{4n\phantom{+1}} \\
	f^{4n+2}(U_{4n+1}) & \subset & U_{4n+1} \\
	f^{4n+3}(U_{4n+2}) & \subset & U_{4n+3} \\
	f^{4n+4}(U_{4n+3}) & \subset & U_{4n+4} \\
\end{array}\right.,\quad\left\{\begin{array}{rcl}
	g^{4n+1}(U_{4n\phantom{+1}}) & \subset & U_{4n+1} \\
	g^{4n+2}(U_{4n+1}) & \subset & U_{4n+2} \\
	g^{4n+3}(U_{4n+2}) & \subset & U_{4n+2} \\
	g^{4n+4}(U_{4n+3}) & \subset & U_{4n+3} \\
\end{array}\right.,$$
$$
(f\circ g)^{8n+5}(U_{4n})\subset U_{4n+4},\quad\text{and}\quad\left\{\begin{array}{l} (g\circ f)^{2n}(U_{4n})\subset f^{-1}(U_{4n}) \\ (g\circ f)^{8n+5}\left(f^{-1}(U_{4n})\right)\subset f^{-1}(U_{4n+4}) \end{array}\right..
$$
In particular, the open sets $(U_{n})_{n\geqslant 4N}$ lie in periodic Fatou domains for $f$ and $g$, but lie in wandering Fatou domains for $f\circ g$ and $g\circ f$.
\end{lem}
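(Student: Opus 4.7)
The strategy is to run the construction of Lemma \ref{lem:Bishop_example} twice in parallel, producing two entire maps $f$ and $g$ of the form given by Theorem \ref{thm:prototype} that share a common $\lambda\in\pi\Nstar$ and a common minimizing sequence $(p_n)_{n\geqslant 1}$, but differ in how their critical values $(w_{p_n}^f)$ and $(w_{p_n}^g)$ are assigned. Starting from Lemma \ref{lem:lambda}, fix $\lambda=\lambda^0$ and choose two $d$-parameter sequences $(d_n^f),(d_n^g)$ both dominating $(d_n^0)$. Because the explicit formula $z\mapsto\cosh(\lambda\sinh(\phi^{\bullet}(z)))$ on $S^+$ is the same for $f$ and $g$, and because Theorem \ref{thm:close_to_id} forces both integrators $\phi^f,\phi^g$ to be arbitrarily close to the identity on any bounded set when the $d$-parameters grow fast enough, the orbits of $1/2$ under $f$ and $g$ stay close on the positive real axis, so Lemma \ref{lem:construction_Un} applied to each map yields a common index sequence $(p_n)$. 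The disks $U_n^f,U_n^g$ lie in $D(\tfrac{1}{2},20(\tfrac{d}{dx}f^n(\tfrac{1}{2}))^{-1})$ and each contains a subdisk of comparable radius; by enlarging the $(d_n)$'s the centers can be forced to differ by much less than the radii, so $U_n:=U_n^f\cap U_n^g$ is a nonempty open set for every $n\geqslant N$.

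Next, I redefine the quasiconformal maps $\rho_{p_n}^f,\rho_{p_n}^g$ recursively as in Lemma \ref{lem:Bishop_example}, each sending $0$ to the prescribed target center. For $f$, place $w_{p_{4n}}^f,w_{p_{4n+1}}^f,w_{p_{4n+2}}^f,w_{p_{4n+3}}^f$ at the centers of $U_{4n},U_{4n+1},U_{4n+3},U_{4n+4}$, which produces the four $f$-inclusions; for $g$, place $w_{p_{4n}}^g,w_{p_{4n+1}}^g,w_{p_{4n+2}}^g,w_{p_{4n+3}}^g$ at the centers of $U_{4n+1},U_{4n+2},U_{4n+2},U_{4n+3}$, which produces the four $g$-inclusions. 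Each redefinition changes the Beltrami coefficient only on the annulus (\ref{eq:support}), and by Theorem \ref{thm:close_to_id} the resulting perturbation of $\phi^f$ (resp.\ $\phi^g$) on every disk of radius $1$ centered at $x_0,\ldots,x_n$ is bounded by a quantity $t_n\ll 1$ whose size we control by taking $d_{p_n}$ large. Picking the $(d_{p_n}^f),(d_{p_n}^g)$ so that $\sum_n t_n$ stays smaller than a prefixed $\varepsilon$, the limit maps $f$ and $g$ satisfy all eight individual orbit inclusions of the statement.

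To derive the composition inclusions, I use that on a neighborhood of the positive real orbit of $1/2$ both $f$ and $g$ act as $\cosh(\lambda\sinh(\cdot))$ up to a near-identity correction, so for $z\in U_{4n}$ the alternating orbit $(f\circ g)^k(z)$ tracks the pure-$f$ orbit $f^{2k}(z)$ step-by-step while the points remain in $S^+$. Tracing the $16n+10$ elementary steps of $(f\circ g)^{8n+5}(U_{4n})$: the first $4n$ steps reach $\tfrac{1}{4}\widetilde{D_{p_{4n}}}$; the $(4n+1)$-th step, which is $g$ (odd position), sends this to $U_{4n+1}$ via $w_{p_{4n}}^g$; the next $4n+1$ steps reach $\tfrac{1}{4}\widetilde{D_{p_{4n+1}}}$ and a $g$-step at position $8n+3$ lands in $U_{4n+2}$; $4n+2$ further steps reach $\tfrac{1}{4}\widetilde{D_{p_{4n+2}}}$ and an $f$-step at position $12n+6$ lands in $U_{4n+3}$; finally $4n+3$ further steps reach $\tfrac{1}{4}\widetilde{D_{p_{4n+3}}}$ and a last $f$-step lands in $U_{4n+4}$. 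The total $4n+1+(4n+1)+1+(4n+2)+1+(4n+3)+1=16n+10$ equals $2(8n+5)$, as required. The two $g\circ f$ inclusions follow by the same tracing shifted by one elementary step, using $\tfrac{1}{4}\widetilde{D_{p_{4n}}}\subset f^{-1}(U_{4n})$ since $f(\tfrac{1}{4}\widetilde{D_{p_{4n}}})\subset U_{4n}$ by the choice of $w_{p_{4n}}^f$.

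The main obstacle is coordinating the two parallel recursive constructions: every modification of $\rho_{p_k}^f$ globally perturbs $\phi^f$ and hence moves the center of every $U_m^f$ for $m\neq k$, and similarly for $g$. One must ensure that after all modifications the intersections $U_n=U_n^f\cap U_n^g$ remain nonempty and that each prescribed critical value $w_{p_k}^{\bullet}$ lies inside the correct intersection $U_?$, not merely inside $U_?^f$ or $U_?^g$ separately. Since the radii of the $U_n$'s decay super-exponentially in $n$ while perturbations of $\phi^f$ and $\phi^g$ could in principle accumulate, this requires the growth of the $d$-parameters to be fast enough that the perturbation budget at each step, controlled quantitatively by Theorem \ref{thm:close_to_id} and Corollary \ref{thm:bounds_derivative}, is much smaller than the smallest relevant radius at that step. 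Under such control, the alternating-iteration branches used in the tracing argument coincide with the specific inverse branches used to define $U_n^f$ and $U_n^g$ in a neighborhood of each $x_k$, and the composition inclusions follow cleanly from the individual ones.
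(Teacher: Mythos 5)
Your proposal is correct and takes essentially the same approach as the paper: run the recursive modification of Lemma \ref{lem:Bishop_example} in parallel for $f$ and $g$, placing the critical values $w^f_{p_k},w^g_{p_k}$ to realize the prescribed combinatorics, and control the accumulated corrections via Theorem \ref{thm:close_to_id}. The only differences are cosmetic: the paper fixes a single common sequence $(d_n)$ for both maps (so the common $(p_n)$ comes for free) and defers the orbit-tracing of the composition entirely to Figure \ref{fig:thmB}, whereas you allow $(d_n^f)$ and $(d_n^g)$ to differ and write out the alternating step count explicitly.
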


\begin{proof}
Let $\left(\lambda^{0},(d_{n})_{n\geqslant 1},(w_{n}^{0}:=1/2)_{n\geqslant 1}\right)$ be a choice of parameters given by Lemma \ref{lem:dn}. We will adjust the parameters recursively to get two choices of parameters $\left(\lambda^{0},(d_{n})_{n\geqslant 1},(w_{n}^{f})_{n\geqslant 1}\right)$ and $\left(\lambda^{0},(d_{n})_{n\geqslant 1},(w_{n}^{g})_{n\geqslant 1}\right)$ corresponding to $f$ and $g$ respectively. We can exactly proceed as in the proof of Lemma \ref{lem:Bishop_example}, modifying every $\rho_{p_{n}}$ so that it sends $0$ to the center of the euclidean disk which corresponds to the combinatorics described above. Of course, we can chose larger $d_{p_{n}}$ for both $f$ and $g$ if necessary. Taking care that the sum over all corrections is smaller that a prefixed $\varepsilon>0$, we obtain two transcendental entire functions such that for every $n\geqslant N$ large enough $U_{n}:=U_{n}^{f}\cap U_{n}^{g}$ is not empty and
$$\left\{\begin{array}{rcl}
	f^{4n+1}(U^{f}_{4n\phantom{+1}}) & \subset & U^{f}_{4n\phantom{+1}} \\
	f^{4n+2}(U^{f}_{4n+1}) & \subset & U^{f}_{4n+1} \\
	f^{4n+3}(U^{f}_{4n+2}) & \subset & U^{f}_{4n+3} \\
	f^{4n+4}(U^{f}_{4n+3}) & \subset & U^{f}_{4n+4} \\
\end{array}\right.,\quad\left\{\begin{array}{rcl}
	g^{4n+1}(U^{g}_{4n\phantom{+1}}) & \subset & U^{g}_{4n+1} \\
	g^{4n+2}(U^{g}_{4n+1}) & \subset & U^{g}_{4n+2} \\
	g^{4n+3}(U^{g}_{4n+2}) & \subset & U^{g}_{4n+2} \\
	g^{4n+4}(U^{g}_{4n+3}) & \subset & U^{g}_{4n+3} \\
\end{array}\right..$$
The proof of the last statment is shown in  Figure \ref{fig:thmB}.

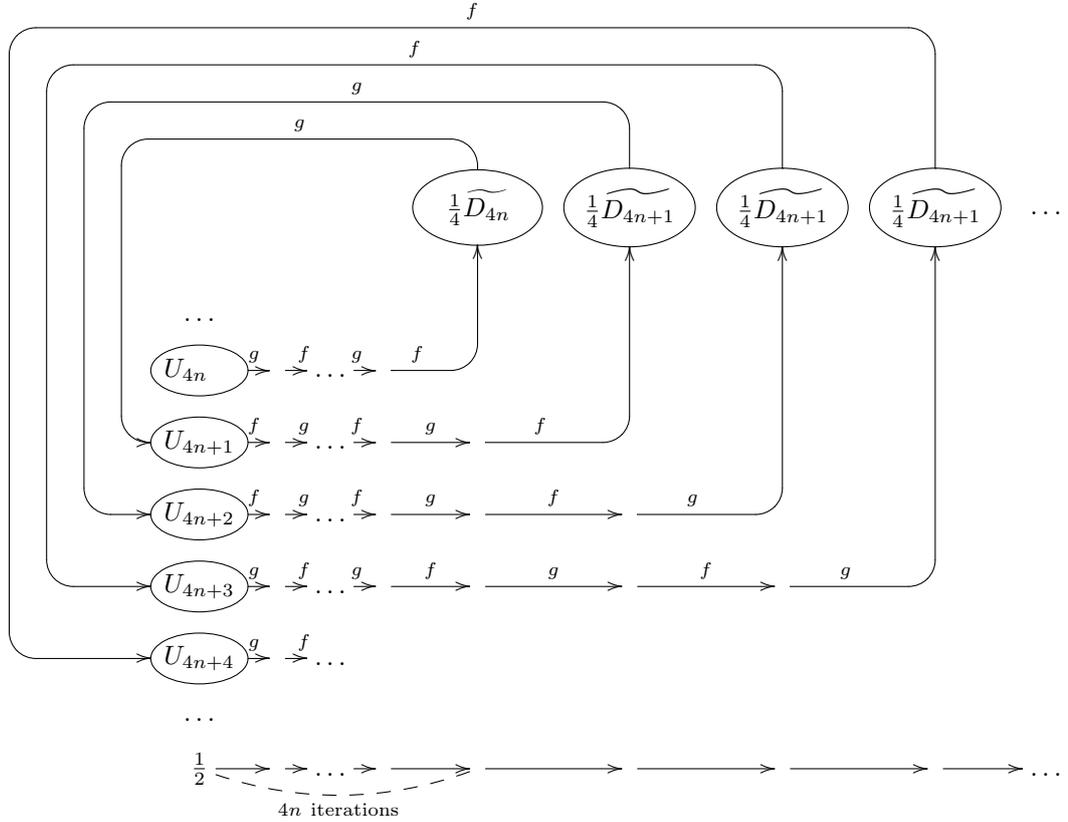
\begin{figure}[!ht]
	\begin{center}
$$\xymatrix @R=8pt @C=8pt {
	&&&&&&&&&&&& \\
	&&&&&&&&&&&& \\
	&&&&&&&&&&&& \\
	&&&&&&&&&&&& \\
	&&&&&&&& *+<15pt>[o][F]{\phantom{1}\frac{1}{4}\widetilde{D_{4n}}_{\phantom{1}}} \ar `u[ulllll] `[ddddlllll]_{g} `[ddddllll] [ddddllll] & *+<15pt>[o][F]{\frac{1}{4}\widetilde{D_{4n+1}}} \ar `u[uulllllll] `[dddddlllllll]_{g} `[dddddlllll] [dddddlllll] & *+<15pt>[o][F]{\frac{1}{4}\widetilde{D_{4n+1}}} \ar `u[uuulllllllll] `[ddddddlllllllll]_{f} `[ddddddllllll] [ddddddllllll] & *+<15pt>[o][F]{\frac{1}{4}\widetilde{D_{4n+1}}} \ar `u[uuuulllllllllll] `[dddddddlllllllllll]_{f} `[dddddddlllllll] [dddddddlllllll] & \dots \\
	&&&&&&&&&&&& \\
	&&&& \dots &&&&&&&& \\
	&&&& *+<10pt>[o][F]{U_{4n\phantom{+1}}} \ar[r]^(0.7){g} & \ar[r]^{f} & \dots \ar[r]^{g} & \ar `r[uuur]^{f} [uuur] &&&&& \\
	&&&& *+<10pt>[o][F]{U_{4n+1}} \ar[r]^(0.7){f} & \ar[r]^{g} & \dots \ar[r]^{f} & \ar[r]^{g} & \ar `r[uuuur]^{f} [uuuur] &&&& \\
	&&&& *+<10pt>[o][F]{U_{4n+2}} \ar[r]^(0.7){f} & \ar[r]^{g} & \dots \ar[r]^{f} & \ar[r]^{g} & \ar[r]^{f} & \ar `r[uuuuur]^{g} [uuuuur] &&& \\
	&&&& *+<10pt>[o][F]{U_{4n+3}} \ar[r]^(0.7){g} & \ar[r]^{f} & \dots \ar[r]^{g} & \ar[r]^{f} & \ar[r]^{g} & \ar[r]^{f} & \ar `r[uuuuuur]^{g} [uuuuuur] && \\
	&&&& *+<10pt>[o][F]{U_{4n+4}} \ar[r]^(0.7){g} & \ar[r]^{f} &\dots &&&&&& \\
	&&&& \dots &&&&&&&& \\
	&&&& \frac{1}{2} \ar[r] \ar@/_{10pt}/@{--}[rrrr]_{4n\ \text{iterations}} & \ar[r] & \dots \ar[r] & \ar[r] & \ar[r] & \ar[r] & \ar[r] & \ar[r] & \dots \\
}$$
	\caption{Sketch (out of scale) of the action of $f$ and $g$.}\label{fig:thmB}
	\end{center}
\end{figure}
\end{proof}

In Section \ref{sec:thmA}, we proved that Bishop's example (coming from Theorem \ref{thm:prototype} like $f$ and $g$ above) has no unexpected wandering domains (Theorem \ref{thm:A}). Using similar arguments we prove the following.

\begin{lem}\label{lem:thmB_no_unexpected}
The two transcendental entire functions $f$ and $g$ coming from Lemma \ref{lem:thmB_Bishop} have no wandering Fatou domains.
\end{lem}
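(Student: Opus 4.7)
The plan is to adapt the three-lemma argument of Section \ref{sec:thmA} to the new maps $f$ and $g$ produced by Lemma \ref{lem:thmB_Bishop}, modifying only how the final contradiction is drawn. I describe the argument for $f$; the argument for $g$ is identical by symmetry.

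First, I observe that the proof of Lemma \ref{lem:thmA_Baker} uses only two facts about the map in play: that it belongs to class $\classB$, and that its critical values accumulate only at $\tfrac{1}{2}$. Both remain true for the $f$ produced by Lemma \ref{lem:thmB_Bishop} (the critical values are $\pm 1$ together with the $w_{n}^{f}$, which either equal $\tfrac{1}{2}$ or are centres of Euclidean disks shrinking to $\tfrac{1}{2}$). Hence Lemma \ref{lem:thmA_Baker} applies verbatim: a hypothetical wandering Fatou component $W$ of $f$ would admit subsequences of positive integers $(n_{k})_{k\geqslant 1}$ and $(m_{k})_{k\geqslant 1}$ with $f^{n_{k}}|_{W} \to \tfrac{1}{2}$ and $f^{n_{k}-1}(W) \subset D_{m_{k}}^{\star}$.

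Next I would define the analogue of the closed set of Lemma \ref{lem:thmA_Rempe} by
$$
A := \big]-\infty,-\tfrac{1}{2}\big] \cup \big[\tfrac{1}{2},+\infty\big[ \cup \bigcup_{n\geqslant 1}\bigcup_{k\geqslant 0}\overline{f^{k}(U_{n}^{\star})}.
$$
In contrast with Theorem A, the $f$-combinatorics prescribed by Lemma \ref{lem:thmB_Bishop} make each forward orbit $\bigcup_{k\geqslant 0}\overline{f^{k}(U_{n}^{\star})}$ a \emph{finite} set, because every $U_{n}^{\star}$ lies in a periodic or preperiodic Fatou component of $f$. Nevertheless $A$ still has the three properties needed for Lemma \ref{lem:thmA_Rempe}: it is closed, forward invariant under $f$, and contains every critical value of $f$. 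Thus Theorem \ref{thm:Rempe} applies in exactly the same way to the hyperbolic domain $U := \C \setminus A$, and the geometric argument of Lemma \ref{lem:thmA_no_unexpected} then forces some iterate $f^{n}(W)$ to intersect some $U_{m}^{\star}$.

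The conclusion now diverges from that of Theorem A. In Bishop's original example the sets $U_{m}^{\star}$ themselves belonged to wandering components, so the analogous step there merely ruled out further, unexpected wandering orbits. In the present setting, however, each $U_{m}^{\star}$ lies in a periodic or preperiodic Fatou component of $f$ by Lemma \ref{lem:thmB_Bishop}; hence $f^{n}(W)$ lies in a preperiodic Fatou component, forcing $W$ itself to be preperiodic and contradicting the assumption that $W$ wanders. The same argument applies to $g$. The main point requiring care is the verification that $A$ contains every critical value of $f$ (which reduces to checking that each $w_{p_{n}}^{f}$ is, by the construction of Lemma \ref{lem:thmB_Bishop}, the centre of some $U_{m}^{f}$) and that $A$ is forward invariant; everything else is a word-for-word transcription of the arguments of Section \ref{sec:thmA}.
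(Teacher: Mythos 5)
Your argument is correct and follows essentially the same route as the paper's (very terse) proof: reuse the set $A$ from Lemma \ref{lem:thmA_Rempe}, check forward invariance via the combinatorics of Lemma \ref{lem:thmB_Bishop}, run the machinery of Section \ref{sec:thmA} to force a hypothetical wandering orbit into some $U_m^{\star}$, and conclude because these now sit in preperiodic components. Your exposition is actually more detailed than the paper's one-sentence proof; the only cosmetic difference is using $\bigcup_{k\geqslant 0}$ instead of $\bigcup_{k=0}^{n}$ in the definition of $A$, which changes nothing since both sets are closed, forward-invariant, and contain the critical values.
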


\begin{proof}
If we denote by $A^{f}$ (respectively $A^{g}$) the closed set defined in Lemma \ref{lem:thmA_Rempe} for $f$ (respectively for $g$), we still have $f(A^{f})\subset A^{f}$ (respectively $g(A^{g})\subset A^{g}$) because of Lemma \ref{lem:thmB_Bishop}. This ensures that an analogue of Lemma \ref{lem:thmA_Rempe} holds and the proof is complete. 
\end{proof}


\bibliographystyle{amsalpha}
\bibliography{Biblio}

\newcommand{\etalchar}[1]{$^{#1}$}
\providecommand{\bysame}{\leavevmode\hbox to3em{\hrulefill}\thinspace}
\providecommand{\MR}{\relax\ifhmode\unskip\space\fi MR }
\providecommand{\MRhref}[2]{%
  \href{http://www.ams.org/mathscinet-getitem?mr=#1}{#2}
}
\providecommand{\href}[2]{#2}
\begin{thebibliography}{MBRG13}

\bibitem[Bak63]{MultiplyConnectedDomains}
Irvine~N. Baker, \emph{Multiply connected domains of normality in iteration
  theory}, Mathematische Zeitschrift \textbf{81} (1963), 206--214.

\bibitem[Bak70]{LimitFunctions}
\bysame, \emph{Limit functions and sets of non-normality in iteration theory},
  Annales Academi{\ae} Scientiarum Fennic{\ae} Mathematica \textbf{467} (1970),
  2--11.

\bibitem[Bak76]{EntireFunctionWanderingDomain}
\bysame, \emph{An entire function wich has wandering domains}, Journal of the
  Australian Mathematical Society \textbf{22} (1976), no.~2, 173--176.

\bibitem[Bak84]{WanderingDomains}
\bysame, \emph{Wandering domains in the iteration of entire functions},
  Proceedings of the London Mathematical Society \textbf{49} (1984), no.~3,
  563--576.

\bibitem[Ber93]{IterationMeromorphicFunctions}
Walter Bergweiler, \emph{Iteration of meromorphic functions}, Bulletin of the
  American Mathematical Society \textbf{29} (1993), no.~2, 151--188.

\bibitem[BF13]{QuasiconformalSurgeryBook}
Bodil Branner and N{\'u}ria Fagella, \emph{Quasiconformal surgery in
  holomorphic dynamics}, Cambridge Studies in Advances Mathematics, vol. 141,
  Cambridge University Press, Cambridge, 2013.

\bibitem[BH99]{SemiconjugationEntireFunctions}
Walter Bergweiler and Aimo Hinkkanen, \emph{On semiconjugation of entire
  functions}, Mathematical Proceedings of the Cambridge Philosophical Society
  \textbf{126} (1999), no.~3, 565--574.

\bibitem[BHK{\etalchar{+}}93]{LimitFunctionsIteratesWanderingDomains}
Walter Bergweiler, Mako Haruta, Hartje Kriete, Hans-G{\"u}nter Meier, and
  Norbert Terglane, \emph{On the limit functions of iterates in wandering
  domains}, Annales Academi{\ae} Scientiarum Fennic{\ae} Mathematica
  \textbf{18} (1993), 369--375.

\bibitem[Bis]{QuasiconformalFoldings}
Christopher~J. Bishop, \emph{Constructing entire functions by quasiconformal
  foldings}, to appear in Acta Mathematica.

\bibitem[BRS13]{MultiplyConnectedWanderingDomains}
Walter Bergweiler, Philip~J. Rippon, and Gwyneth Stallard, \emph{Multiply
  connected wandering domains of entire functions}, Proceedings of the London
  Mathematical Society \textbf{107} (2013), no.~6, 1261--1301.

\bibitem[BW98]{CompositeEntireFunctions}
Walter Bergweiler and Wang{ }Yuefei, \emph{On the dynamics of composite entire
  functions}, Arkiv f{\"o}r Matematik \textbf{36} (1998), no.~1, 31--39.

\bibitem[EL87]{EntireFunctionsPathologicalDynamics}
Alexandre~E. Er{\`e}menko and Mikhail Lyubich, \emph{Examples of entire
  functions with pathological dynamics}, Journal of the London Mathematical
  Society \textbf{36} (1987), no.~3, 458--468.

\bibitem[EL92]{DynamicalPropertiesEntireFunctions}
\bysame, \emph{Dynamical properties of some classes of entire functions},
  Annales de l'Institut Fourier \textbf{42} (1992), no.~4, 989--1020.

\bibitem[Er{\`e}89]{IterationEntireFunctions}
Alexandre~E. Er{\`e}menko, \emph{On the iteration of entire functions},
  Dynamical systems and ergodic theory ({W}arsaw, 1986), Banach Center
  Publications, vol.~23, PWN, 1989, pp.~339--345.

\bibitem[Fat19]{Fatou1}
Pierre Fatou, \emph{Sur les {\'e}quations fonctionnelles}, Bulletin de la
  Soci{\'e}t{\'e} Math{\'e}matique de France \textbf{47} (1919), 161--271.

\bibitem[GK86]{FinitenessTheoremEntireFunctions}
Lisa~R. Goldberg and Linda Keen, \emph{A finiteness theorem for a dynamical
  class of entire functions}, Ergodic Theory and Dynamical Systems \textbf{6}
  (1986), no.~2, 183--192.

\bibitem[Her84]{ExemplesRationnellesOrbiteDense}
Micha{\"e}l Herman, \emph{Exemples de fractions rationnelles ayant une orbite
  dense sur la sph{\`e} de {R}iemann}, Bulletin de la Soci{\'e}t{\'e}
  Math{\'e}matique de France \textbf{112} (1984), no.~1, 93--142.

\bibitem[KS08]{OnMultiplyConnectedWanderingDomains}
Masashi Kisaka and Mitsuhiro Shishikura, \emph{On multiply connected wandering
  domains of entire functions}, Transcendental dynamics and complex analysis,
  London Mathematical Society Lecture Note Series, vol. 348, Cambridge
  University Press, 2008, pp.~217--250.

\bibitem[MBRG13]{AbsenceWanderingDomains}
Helena Mihaljevi{{\'c}}-Brandt and Lasse Rempe-Gillen, \emph{Absence of
  wandering domains for some real entire functions with bounded singular sets},
  Math. Ann. \textbf{357} (2013), no.~4, 1577--1604. \MR{3124942}

\bibitem[Mil06]{MilnorBook}
John Milnor, \emph{Dynamics in one complex variable}, third ed., Annals of
  Mathematics Studies, vol. 160, Princeton University Press, Princeton, NJ,
  2006.

\bibitem[Pom75]{PommerenkeBook}
Christian Pommerenke, \emph{Univalent functions}, Studia
  Mathematica/Mathematische Lehrb{\"u}cher, vol.~15, Vandenhoeck und Ruprecht,
  G{\"o}ttingen, 1975.

\bibitem[Sin03]{CompositionEntireFunctions}
Anand~Prakash Singh, \emph{On the dynamics of composition of entire functions},
  Mathematical Proceedings of the Cambridge Philosophical Society \textbf{134}
  (2003), no.~1, 129--138.

\bibitem[Sul85]{NoWanderingTheorem}
Denis Sullivan, \emph{Quasiconformal homeomorphisms and dynamics. {P}art {I}.
  {S}olution of the {F}atou-{J}ulia problem on wandering domains}, Annals of
  Mathematics \textbf{122} (1985), no.~3, 401--418.

\bibitem[War42]{ConformalMappingInfiniteStrips}
Stefan~E. Warschawski, \emph{On conformal mapping of infinite strips},
  Transactions of the American Mathematical Society \textbf{51} (1942),
  280--335.

\end{thebibliography}
\addcontentsline{toc}{section}{References}

\end{document}